\documentclass[10pt]{amsart} % amsart (for eprint/arxiv/hal uploading)

\newif\iffullversion
\fullversiontrue
%\fullversionfalse

\usepackage{amsmath,amssymb,amsfonts}
\usepackage{multirow}
%\iffullversion
%\usepackage{fullpage}
%\fi
\usepackage{arydshln}
\usepackage[mathscr]{eucal}
\usepackage[normalem]{ulem}
\usepackage{fancyhdr}
\usepackage{tikz}
\usepackage{url}
\usepackage{booktabs}
\usepackage[vlined,linesnumbered,ruled]{algorithm2e}
\usepackage[font=small,labelfont=bf,compatibility=false]{caption}
\usepackage{subcaption}
\usepackage{etoolbox}
\usepackage{booktabs}
\usepackage{rotating}
\usepackage{multirow}
\usepackage{enumitem}
\usepackage{pifont}
\usepackage{xcolor}
\usepackage{tikz-qtree}
\usepackage{pgfplots}
\pgfplotsset{width=7cm,compat=1.18} 
\definecolor{darkblue}{rgb}{0,0,0.5}
\definecolor{darkgreen}{rgb}{0,0.5,0}
\usepackage[colorlinks=true,linkcolor=darkblue,urlcolor=darkblue,citecolor=darkgreen]{hyperref}
\usepackage{mathtools}
\usepackage{cancel}
\usepackage{ifthen}
\usepackage{graphicx}
\usepackage[super]{nth}
\usepackage{bm}
\usepackage{csquotes}

\usepackage{color}
\usepackage[pagewise]{lineno}
\definecolor{light}{rgb}{0.8, 0.8, 0.8}

\newif\ifhidetodos
\hidetodosfalse
%\hidetodostrue

\ifhidetodos
\usepackage[final,multiuser,inline,nomargin,marginclue]{fixme}
\else
\usepackage[draft,multiuser,inline,nomargin,marginclue]{fixme}
\fi
% register commands for author(s)

%\newcommand{\talpha}{\alpha}
%\newcommand{\tbeta}{\beta}
\pagestyle{plain}

\newtheorem{theorem}{Theorem}[section]
\newtheorem{lemma}[theorem]{Lemma}
\newtheorem{corollary}[theorem]{Corollary}
\newtheorem{proposition}[theorem]{Proposition}
\newtheorem{heuristic}[theorem]{Heuristic}
\newtheorem{conjecture}[theorem]{Conjecture}

\theoremstyle{definition}
\newtheorem{definition}[theorem]{Definition}

\newtheorem{remark}{Remark}

\newcommand{\bruno}[1]{\textcolor{red}{\textbf{Bruno: }{#1}}}

\newcommand{\Z}{\ensuremath{\mathbb{Z}}}
\newcommand{\N}{\ensuremath{\mathbb{N}}}
\newcommand{\Q}{\ensuremath{\mathbb{Q}}}
\newcommand{\R}{\ensuremath{\mathbb{R}}}
\newcommand{\F}{\ensuremath{\mathbb{F}}}

 %smoothness bound

% vectors

\newcommand{\bv}{{\bm{b}}}
\newcommand{\cv}{{\bm{c}}}

\newcommand{\vv}{{\bm{v}}}

\newcommand{\xv}{{\bm{x}}}
\newcommand{\yv}{{\bm{y}}}

\newcommand{\zero}{{\bm{0}}}

% matrices

\newcommand{\Bm}{{\bm{B}}}
\newcommand{\Cm}{{\bm{C}}}

\newcommand{\lat}{\ensuremath{\mathcal{L}}}
\newcommand{\ball}{\ensuremath{\mathcal{B}}}
\newcommand{\gh}{\ensuremath{\operatorname{gh}}}
\newcommand{\rk}{\ensuremath{\operatorname{rk}}}
\newcommand{\vol}{\ensuremath{\operatorname{vol}}}
\newcommand{\spa}{\ensuremath{\operatorname{span}}}
\newcommand{\val}{\ensuremath{\operatorname{val}}}
\newcommand{\norm}[1]{\lVert #1 \rVert}

\newcommand*\pnx[1]{\ensuremath{2x^{#1}-1}}

\newcommand*\fpnx[1]{\ensuremath{p_{#1}(x) = \pnx{#1}}}

\newenvironment{proof*}[1]
  {%
   \begin{proof}}
  {\end{proof}}

\title{Large smooth twins from short lattice vectors}

%\iffullversion
\author{Erik Mulder}
%\address{Independent Mathematician, (location) (change this appropriately)}
\address{University of Bergen, Bergen, Norway}
\curraddr{}
\email{erik.mulder@uib.no}
\thanks{}

\author{Bruno Sterner}
\address{Inria and École polytechnique, Institut Polytechnique de Paris, Palaiseau, France \& University of Waterloo, Waterloo, Canada}
\curraddr{}
\email{bsterner@uwaterloo.ca}
\thanks{}

%    author two information
\author{Wessel van Woerden}
\address{PQShield, Amsterdam, The Netherlands}
\curraddr{}
\email{wessel.vanwoerden@pqshield.com}
\thanks{}

%    \subjclass is required.
\subjclass[2020]{Primary }

\date{}

\dedicatory{}
%\fi

%\graphicspath{{../plots/}}

\begin{document}

%\linenumbers
\nolinenumbers

\begin{abstract}
	% Finding the largest pair of consecutive $B$-smooth integers is computationally challenging. Current algorithms to find such pairs have an exponential runtime \erik{ours has as well} -- which has only been provably done for $B \leq 100$ and heuristically for $100 < B \leq 113$.
    % We improve this by detailing a new algorithm to find such large pairs. The core idea is to solve the shortest vector problem (SVP) in a well-constructed lattice. With this we are able to significantly increase $B$ and notably report the heuristically largest pair with $B = 751$ which has $196$ bits.
    % By slightly modifying the lattice, we are able to find larger pairs for which one cannot conclusively say whether it is the largest or not for a given $B$ \erik{for $B = 751$ we also can't}.
    % This notably includes a $213$-bit pair with $B = 997$ which is the largest pair found in this work. %\bruno{more condensing} \ebw{anglacised english (s'il vous plait)} % We additionally put our algorithm in a cryptographic context...
	% %\wessel{I think the abstract could use some more work. Also mention heuristic aspect of it or that it is `expected to be optimal'. At the moment it is a bit comparing the 'provable' case of $113$ versus the very much unproven case of $751$ which feels a bit disingenuous to me. Mention $\pm 20\%$ speed-up for sqisign. Mention $200+$ bits one? }
    % \erik{Possible rewrite:\\
    Finding the largest pair of consecutive $B$-smooth integers for a fixed $B$, also called a $B$-smooth twin, is computationally challenging. It has only been provably done for $B \leq 100$ and heuristically for $100 < B \leq 113$. %value of 
    We improve this by detailing a new algorithm to find such smooth twins based on solving the shortest vector problem (SVP) in a well-constructed lattice.
    %Using a heuristic about smooth numbers in short intervals, we give an estimate of the size of the largest smooth twin for a given $B$. 
    Using an estimate of the largest $B$-smooth twin that we develop in this work, we are able to significantly increase $B$ and notably report the heuristically largest twin with $B = 751$, which has $196$ bits.
    %This requires  based on heuristics.
    By slightly modifying the lattice, we are able to find even larger twins, but the resulting smoothness bound will not always be optimal.
    This notably includes a $213$-bit twin with $B = 997$, which is the largest twin found in this work.
\end{abstract}

\maketitle

\begin{displayquote}
	\begin{scriptsize}
	\emph{``Such pairs lead to sets of ``nearly" dependent logarithms of primes. For instance the number} $K = \log(63927525376) - \log(63927525375) = 13 \log(2) - 3 \log(3) - 3 \log(5) - 7 \log(7) + 4 \log(11) + \log(13) - \log(23) + \log(41)$,
%	\begin{align*}
%		K & = \log(63927525376) - \log(63927525375) \\
%		& = 13 \log(2) - 3 \log(3) - 3 \log(5) - 7 \log(7) \\
%		& \qquad \qquad \qquad + 4 \log(11) + \log(13) - \log(23) + \log(41),
%	\end{align*}
	\emph{which cannot be zero because of the unique factorisation theorem, is, however, less than $1.56427 \cdot 10^{-11}$."}
	
	\begin{flushright}- D.H. Lehmer~\cite{lehmer1964problem} \end{flushright}
	\end{scriptsize}
\end{displayquote}

\section{Introduction}\label{sec:intro}

%Modern computational number theory has become central to most public-key cryptography for both constructive and destructive purposes. Most notably this includes the integer factorisation problem which became the hard problem underlying the RSA cryptosystem. Many authors have proposed factoring algorithms to best estimate the security of RSA. One of these algorithms was formulated by Schnorr~\cite{schnorr1991factoring} and (independently) Adleman \cite{adleman1995factoring}. To factor an integer $N$, their idea is to firstly find many approximations of $\log(N)$ as sums $\sum_{i=1}^k e_i\log(p_i)$ for small primes $p_i$ and small integers $e_i$.
%If these approximations are good enough, then $\prod_{i=1; e_i > 0}^k p_i^{e_i} \bmod N$ is small again, and perhaps even smooth.
%Then using these so-called relations mod $N$, construct integers $x$ and $y$ such that $x^2 \equiv y^2 \mod N$ (a common strategy in factoring algorithms). Finally obtaining a factor of the form $\gcd(x \pm y , N)$. The core idea in producing these approximations is solving the shortest vector problem in a lattice. While this strategy looks promising, the growing and necessary lattice dimension when you increase the integer $N$ provides an obstruction to the feasibility of the algorithm. As a result, it did not compete with other factoring algorithms such as the number field sieve, which was being developed alongside this algorithm~\cite{lenstra1993development}. \erik{Check if this is a fair description of the factoring algorithm}

In 1991, during what can be considered the golden age of integer factorisation, Schnorr~\cite{schnorr1991factoring} proposed a factoring method\footnote{Which was independently analysed by Adleman~\cite{adleman1995factoring}} based on solving the closest vector problem in the so-called \emph{prime number lattice}.
While his strategy looked promising, the growing and necessary lattice dimension when you increase the input integer provides an obstruction to the feasibility of the algorithm. As a result, it did not compete with other factoring algorithms such as the number field sieve~\cite{lenstra1993development}.%, which was being developed alongside this algorithm.

%Another interest for number theorists and cryptographers are smooth integers.
Many modern integer factoring techniques use smooth integers which are integers with only small prime factors. We quantify this properly to say that an integer $n$ is $B$-smooth if $p \mid n$ $\Rightarrow$ $p \leq B$ for all primes $p$, where we refer to $B$ as the \emph{smoothness bound}. We refer to\iffullversion~Granville's survey \fi~\cite{granville2008smooth} for the factoring relevance of these integers and beyond.
%These are integers with small primes factors. We quantify this properly to say that an integer $n$ is $B$-smooth if $p \mid n$ $\Rightarrow$ $p \leq B$ for all primes $p$. For computational number theory, these integers are crucial to tackle certain problems (for an extensive survey we refer to~\cite{granville2008smooth}). Most notably these integers play a key role in the analysis of the quadratic sieve -- a different and more practical factoring algorithm. In particular its analysis lead to the subexponential complexity of the algorithm.

Recently the problem of finding pairs of consecutive smooth integers, which we call smooth twins, has attracted attention. The primary relevance of these smooth twins comes from isogeny-based cryptography. Certain cryptosystems, including the original versions of SQIsign~\cite{SQISign,luca2022new} and a variant of POK{\'E}~\cite{basso2025poke,santos2024finding}, either work with large smooth twins whose sum is prime or a small variant of this. \iffullversion \else 
To formalise this, we call a pair of consecutive integers $(r,r+1)$ a \emph{$B$-smooth twin} if their product $r \cdot (r+1)$ is $B$-smooth and drop $B$ from this notation when it is implicit from the context. We call a $B$-smooth twin \emph{strictly} $B$-smooth if it is not $(B-1)$-smooth. \fi % Alongside several number-theoretic applications of smooth twins, its   This enables efficient isogeny computations of supersingular curves over $\F_{p^2}$ and its quadratic twist.

\iffullversion
We formalise the definition of $B$-smooth twins.

\begin{definition}\label{def:smooth}
    We call a pair of consecutive integers $(r,r+1)$ a \emph{$B$-smooth twin} if their product, $r \cdot (r+1)$, is $B$-smooth and drop $B$ from this definition when it is implicit from the context.
    When $B$ is included we call it the \emph{smoothness bound}. We call a $B$-smooth twin \emph{strictly} $B$-smooth if it is not $(B-1)$-smooth.
    % \erik{Later, the notion of a \textit{strict} $B$-smooth twin is also used, incorporate that here?}.
\end{definition}
\fi
%It is known that the set of $B$-smooth twins is finite~\cite{Stormer}. One can find all such twins by solving exponentially many Pell equations in $B$ (see \S\ref{subsec:consmethod}) which has only been done for $B \leq 113$~\cite{lehmer1964problem,luca2011largest,BSIDH}. For $B = 113$ there are 33,233 such twins -- the largest of these has $75$ bits. We give terminology for the largest $B$-smooth twin.

It is known that the set of $B$-smooth twins is finite~\cite{Stormer} and one can find all such twins by solving exponentially many Pell equations in $B$. This has been done for $B = 100$~\cite{luca2011largest} and heuristically for $B = 113$~\cite{BSIDH} -- the largest of these twins have $64$ and $75$ bits (respectively). \iffullversion We give terminology for the largest $B$-smooth twin. \else We call the largest $B$-smooth twin an \emph{optimal twin}. In the other direction, for an input bit size $b$, the \emph{optimal smoothness bound} is the smallest smoothness bound $B$ such that there is a $b$-bit $B$-smooth twin. \fi  %(r,r+1)   . For $B = 113$ there are 33,233 such twins 

\iffullversion
\begin{definition}\label{def:optimal}
    We call the largest $B$-smooth twin an \emph{optimal} twin. In the other direction, for an input bit size $b$, the \emph{optimal smoothness bound} is the smallest smoothness bound $B$ such that there is a $B$-smooth twin $(r,r+1)$ with $r > 2^{b-1}$.
\end{definition}
\fi

For cryptographic relevance one requires $256$-bit smooth twins. We call these \emph{cryptographic smooth twins}. The optimal smoothness bound for such a twin is $B \approx 1250$. So finding them from solving (at least a large proportion of) Pell equations is far beyond computational reach. Other methods~\cite{BSIDH,costello2021sieving,sterner2025towards} exist that find such cryptographic smooth twins but one obtains larger smoothness bounds. % -- i.e. $B > 2^{14}$. 

%Instead of finding all smooth twins one might hope to just find the optimal twin using Pell equations. However there is no advantage other than solving either all or at least a large proportion of Pell equations. This is because one cannot a priori predict which Pell equation will feature the optimal twin as a solution. %Other methods exist~\cite{conrey2013smooth} for finding large smooth twins with a sufficiently small $B$ but they do not find optimal ones as $B$ increases.

\subsection*{Contributions.} In this work we revisit Schnorr's factoring idea and repurpose it to find large smooth twins. We observe that the shortest vectors in the prime number lattice often correspond to smooth twins. So using state-of-the-art SVP algorithms~\cite{albrecht2019general,ducas2021advanced,zhao2025sieving} we find short vectors in the lattice and see whether they correspond to smooth twins. By parametrising the lattice correctly, one can search for smooth twins of a particular size including the optimal twins. With current SVP algorithms, we can find heuristically optimal twins with much larger smoothness bounds. For instance, we found the following $196$-bit $751$-smooth twin %\wessel{Mention previous comparable record here?} \emph{close smooth integers} which are smooth and coprime integers with a tiny $\log$ difference. This notably includes
\begin{align}
\begin{split} \label{eqn:opt751}
	r & = 7^7 \cdot 11 \cdot 17 \cdot 29 \cdot 47 \cdot 59 \cdot 67 \cdot 83^2 \cdot 89 \cdot 151^3 \cdot 163 \cdot 173 \cdot 271 \cdot 347 \cdot 461  \\
	& \qquad \cdot 491 \cdot 547\cdot 587 \cdot 619 \cdot 661  \cdot 683 \cdot 701, \mbox{ and} \\
	r+1 & = 2 \cdot 3^9 \cdot 13^2 \cdot 19 \cdot 31 \cdot 41 \cdot 71 \cdot 73 \cdot 97 \cdot 157^2 \cdot 181^3 \cdot 191 \cdot 227 \cdot 241 \cdot 293 \\
	& \qquad \cdot 307^3 \cdot 337\cdot 557 \cdot 617 \cdot 727 \cdot 751
\end{split}
\end{align}
which we believe is optimal. As mentioned earlier, the only known provable optimal twins have $B \leq 100$. For $100 < B \leq 113$ optimal twins are only known heuristically. % -- the largest is the $103$-smooth twin $r = 19316158377073923834000$.

In order to convince the reader that this $751$-smooth twin is likely the optimal twin, we develop theoretical bounds and estimates for optimal twins based on some heuristics. In particular, the standout result is the following. % on smooth integers
\begin{theorem} \label{thm:opt}
	Assuming Heuristic~\ref{heu:opt}, as $B \to \infty$ we have the following asymptotic for optimal $B$-smooth twins $(r,r+1)$:
	\begin{align*}
		r \sim e^{2e\sqrt{B}}.
	\end{align*}
\end{theorem}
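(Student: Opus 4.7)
The plan is to translate Heuristic~\ref{heu:opt} into an implicit equation for $\log r$, and then invert that equation asymptotically as $B\to\infty$.

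First, I would interpret the heuristic as asserting that a ``random'' integer of size $r$ is $B$-smooth with density $\rho(u)$, where $\rho$ is the Dickman--de Bruijn function and $u=\log r/\log B$, and that the smoothness events for $r$ and $r+1$ may be treated as independent. The expected number of $B$-smooth twins up to $N$ is then of order $N\,\rho(u)^2$, and the largest such twin sits at the threshold $r\,\rho(u)^2\asymp 1$; taking logarithms gives
\[
\log r \;=\; -2\log \rho(u)\,\bigl(1+o(1)\bigr), \qquad u=\frac{\log r}{\log B}.
\]

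Second, I would substitute the saddle-point refinement
\[
-\log \rho(u) \;=\; u\bigl(\log u+\log\log u - 1\bigr) + O\!\left(\frac{u\log\log u}{\log u}\right), \qquad u\to\infty,
\]
and stress that I need this refined form rather than the cruder $-\log\rho(u)\sim u\log u$: the latter only pins down $\log r$ up to an extra factor of $\log B$ and cannot distinguish $e^{2e\sqrt B}$ from $e^{\sqrt B \log B}$. Writing $L=\log r$ and $u=L/\log B$, the balance equation becomes $\log B = 2\bigl(\log u+\log\log u-1\bigr)\bigl(1+o(1)\bigr)$.

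Finally, I would solve this implicit equation with the ansatz $u=A\sqrt{B}/\log B$, where $A=A(B)$ is to be determined. Direct computation gives $\log u=\tfrac12\log B+\log A-\log\log B$ and $\log\log u=\log\log B-\log 2+o(1)$; the $\pm\log\log B$ pieces cancel, the $\tfrac12\log B$ matches on both sides, and what remains is $\log A = \log 2 + 1 + o(1)$, so $A\to 2e$. Consequently $u\sim 2e\sqrt{B}/\log B$ and $L=u\log B\sim 2e\sqrt{B}$, which is exactly $r\sim e^{2e\sqrt{B}}$. The main obstacle is maintaining enough precision throughout: the heuristic must supply a multiplicative $(1+o(1))$ on the twin count (not just the right polynomial scale), and one has to carry the sub-leading $u(\log\log u-1)$ term in the Dickman expansion, since it is precisely this correction that fixes the constant $2e$ rather than some other value.
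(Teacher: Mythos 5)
Your argument is essentially the paper's: both start from the threshold equation $\rho(u)=r^{-1/2}$ supplied by Heuristic~\ref{heu:opt}, insert the refined asymptotic $\rho(u)=\bigl((e+o(1))/(u\log u)\bigr)^{u}$ of~\eqref{rho_approx} (correctly observing that the crude $\rho(u)\approx u^{-u}$ only gives $\log r\approx\sqrt{B}\log B$), and then invert the resulting relation $u\log u=(e+o(1))\sqrt{B}$ --- the paper via the Lambert $W$ function (Lemma~\ref{lem:invlog}), you via the equivalent ansatz $u=A\sqrt{B}/\log B$, with the same conclusion $u\log B\sim 2e\sqrt{B}$. One bookkeeping caution: as literally written, your balance equation $\log B=2(\log u+\log\log u-1)(1+o(1))$ carries a multiplicative error, i.e.\ an additive $o(\log B)$, which would swallow the constants $\log 2+1$ you extract in the last step; what you need --- and what your actual inputs $r\rho(u)^2\asymp 1$ and the expansion of $-\log\rho(u)$ with error $o(u)$ do yield after dividing through by $u$ (using $u\to\infty$, a point the paper checks explicitly) --- is the additive form $\log B=2(\log u+\log\log u-1)+o(1)$.
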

This result explains the asymptotic behaviour of optimal twins, but for concrete and small $B$ this estimate deviates from the precise size. For this regime one can use numerical techniques to get a better approximation for optimal twins.

Already in 1975, Erd\H{o}s essentially showed, under analogous heuristic assumptions, that the largest $B$-smooth twin grows as $r \sim e^{O(\sqrt{B})}$ \cite{erdos1975problems}.
We improve on this by making the constant in the exponent explicit.

By introducing some trade-offs in the lattice, one can find even larger twins. One cannot expect the resulting twins to be optimal using these trade-offs. For instance we found the following $213$-bit $997$-smooth twin
\begin{align}
\begin{split} \label{eqn:twin997}
	r & = 19^2 \cdot 41 \cdot 43^2 \cdot 53 \cdot 59^2 \cdot 73^2 \cdot 83 \cdot 173 \cdot 227 \cdot 241 \cdot 281 \cdot 337 \cdot 397^2 \cdot 433 \\
	& \qquad \cdot 541 \cdot 577 \cdot 593 \cdot 787 \cdot 821 \cdot 839 \cdot 857^2 \cdot 967, \mbox{ and} \\
		r+1 & = 2^2 \cdot 3 \cdot 5^2 \cdot 13^3 \cdot 23 \cdot 37 \cdot 47 \cdot 79 \cdot 107 \cdot 127 \cdot 131 \cdot 151 \cdot 157 \cdot 167 \cdot 179 \\
		& \qquad \cdot 181^2 \cdot 193 \cdot 223 \cdot 283 \cdot 317 \cdot 367 \cdot 379 \cdot 601 \cdot 709^2 \cdot 743 \cdot 941 \cdot 997
\end{split}
\end{align}
and, with the estimates that will be developed in Section~\ref{sec:smoothheuristic}, we suspect that the optimal $997$-smooth twin should have approximately $227$ bits. We discuss the challenges to find larger smooth twins relevant for cryptography.

For some fixed values of $B$, we attempt to find all $B$-smooth twins from our approach by expanding a known set of smooth twins~\cite{bruno2022cryptographic}. In particular we conjecture to have the complete set of $200$-smooth twins. The only known way to verify this is to comprehensively solve $2^{46}$ Pell equations which is computationally challenging. 

%We attempt to find all $B$-smooth twins from our approach for small $B > 113$. A relatively large proportion of $B$-smooth twins for $113 < B < 550$ has been established using a different algorithm~\cite{bruno2022cryptographic}. One cannot conclude from this the completeness of $B$-smooth twins for $B$ slightly larger than $113$. For instance, using our lattice approach, we found a $139$-smooth twin $r = 3482435534325338530939$ which was not known prior to this work. We expand this set of smooth twins by finding new twins with our approach. %For larger $B$ we report the total number of known $B$-smooth twins but do not claim to have all of them. Instead we conjecture asymptotics for the number of $B$-smooth twins in terms of the number of primes up to $B$.

\iffullversion
In addition, we put our algorithm in a cryptographic context. While we cannot find cryptographic smooth twins, we are still able to use smaller twins and boost these to cryptographic parameters for the original version of the SQIsign signature scheme~\cite{SQISign}. These parameters have smaller smoothness bounds than those submitted to \emph{NIST's call for additional post-quantum signatures}~\cite{chavez2023sqisign}. With a theoretical decrease of $19.85\%$ on the signing time, these new parameters are a competitive alternative that could be considered for future use.
\else
In addition, we put our algorithm in a cryptographic context and get new parameters for the original version of SQIsign~\cite{SQISign} with smaller smoothness bounds. With a theoretical decrease of $19.85\%$ on the signing time, these new parameters are a competitive alternative that could be considered for future use.
\fi

%\erik{Next smooth integer? What can we say on this? Can we give some justification to the conjecture I posed? If not, can we do some numerical results (with say Boneh's CRT decoding). For instance are there any smooth integers between $r = 3829520855359828440252072437253229176860216312$ and $s = 3829520855359828440252072437258759981073810775$ (up to some smoothness bound)} \bruno{Will writeup short summary of any results here}

%\subsubsection*{Organisation.} \bruno{We did this here and that there...}

A Python3 implementation of our algorithm is publicly available at
\begin{center}
	\url{http://github.com/WvanWoerden/SmoothTwins},
\end{center}
and a dataset containing the conjectured complete set of $200$-smooth twins, and some of the largest smooth twins for $B < 1100$, is publicly available at~\cite{zenodo}
\begin{center}
	\url{https://zenodo.org/records/18234089}.
\end{center} 

\subsubsection*{Notation.} For a smoothness bound $B$, $P_B \coloneqq \{2,3,\dots,q\} = \{ p \leq B \}$ will be the set of primes up to $B$ with cardinality $\pi(B) = \# P_B$. We will denote $\log( \cdot )$ and $\log_2( \cdot )$ for the natural and base-$2$ logarithm respectively. We denote $\val_p(y)$ for the valuation of $y \in \Q$ at $p$. For two functions $f,g : \N \to \R$ we write $f(n) = O(g(n))$ if there is some $C > 0$ and $N \in \N$ such that $|f(n)| \leq C |g(n)|$ for all $n \geq N$. We write $f(n) = \Omega(g(n))$ if $g(n) = O(f(n))$; $f(n) = o(g(n))$ if $f(n)/g(n) \to 0$ as $n \to \infty$; and also $f(n) \sim g(n)$ if $f(n)/g(n) \to 1$ as $n \to \infty$. %\ebw{include more if globally necessary}
%\iffullversion
\subsection*{Acknowledgements.} The second author was supported by two projects: first the HYPERFORM\footnote{See \url{https://project.inria.fr/hyperform}.} consortium funded through Bpifrance and l’Agence nationale de la recherche through a Plan France 2030 grant (ANR-22-PETQ-0008 PQ-TLS); and second the NSERC Alliance Consortia Quantum Grant ``Accelerating the transition to quantum-resistant cryptography'' (ALLRP 578463–2022), and the NSERC Discovery Grant program\footnote{See \url{https://nserc-crsng.canada.ca/en/funding-opportunity/archived-alliance-consortia-quantum-grants-call-proposals-supporting}}.
Part of this work was executed while the third author was employed by IMB at Université de Bordeaux under the CHARM ANR-NSF grant (ANR-21-CE94-0003). Experiments presented in this paper were carried out using the PlaFRIM experimental testbed, supported by Inria, CNRS (LABRI and IMB), Université de Bordeaux, Bordeaux INP and Conseil Régional d’Aquitaine~\footnote{See \url{https://www.plafrim.fr}.}.
%\ebw{anymore acknowledgements?} 
%\fi
%!TEX root = ./paper.tex
% \section{Existing Smooth Twins}\label{sec:prior}
\section{Existing Methods for Finding Smooth Twins}\label{sec:prior}
% \erik{The title of this chapter is a little odd. Perhaps: \textit{Previous methods for finding smooth twins} ?}
\iffullversion
We give a short summary of the algorithms that exist to find smooth twins.

\subsection{Constructive Methods} \label{subsec:consmethod}

The methods presented here attempt to find all or almost all $B$-smooth twins for a fixed and small smoothness bound $B$. %These can only be done for small $B$ so finding cryptographic sized optimal smooth twins is out of scope.

\else

We give a short summary of two methods that find all or almost all $B$-smooth twins for a fixed and small smoothness bound $B$.

\fi

\subsubsection*{Solving Pell equations.} Let $x=2r+1$ so that $x^2-1 = 4r(r+1)$ is $B$-smooth. Decompose this into its squarefree part, $D$, and its square part, $y^2$. Then the pair $(x,y)$ is a solution to the Pell equation $X^2 - DY^2 = 1$. One reverses this to solve all $2^{\pi(B)}$ Pell equations, one for each $D = \prod_{\ell \in P_B} \ell^{e_{\ell}}$ with $e_{\ell} \in \{ 0,1 \}$. Lehmer~\cite{lehmer1964problem} gives provable conditions on these equations to ensure all $B$-smooth twins are found.

%This is computationally infeasible for large $B$. A lower bound\footnote{\erik{Short discussion on Pell equation depth and doubly exponential $D$.}} on the complexity to solve these equations is $O\left( 2^{\pi(B) + o(\pi(B)}\right)$, where the $2^{o(\pi(B))}$ abstracts the subexponential cost of solving a Pell equation~\cite{jacobson2009solving}.

% \erik{please check this section.}
This is computationally infeasible for large $B$ -- the complexity to solve these equations is at least $2^{\pi(B) + o(\pi(B))}$. This ignores the fact that the fundamental solution is (in the worst case) doubly exponential in $B$. Finding such solutions would add another exponential to this complexity\footnote{We refer to~\cite[\S6]{lehmer1964problem} and~\cite{luca2011largest} for some discussion on getting around this.}. In practice one can adopt an early abort strategy that only considers solutions up to some reasonable bound.
If this bound is exponential in $B$, then processing each discriminant becomes polynomial in $B$ and the total complexity becomes $2^{\pi(B) + o(\pi(B))}$. With this, one could heuristically get all $B$-smooth twins (as done for $B = 113$~\cite{BSIDH}).

% \bruno{V2:} This is computationally infeasible for large $B$ -- the complexity to solve these equations is at least $O\left( 2^{\pi(B) + o(\pi(B))}\right)$.  This ignores the fact that the fundamental solution of a Pell equation is exponential in $D$, i.e. doubly exponential in $B$ (in the worst case). Doing arithmetic with such integers would add another exponential to this complexity\footnote{We refer to~\cite[\S6]{lehmer1964problem} and~\cite{luca2011largest} for some discussion on getting around this.}. In practice one adopts an early abort strategy which only considers fundamental solutions up to som bound. If this bound is exponential in $B$ then this effect now becomes polynomial in $B$ and the complexity is $O\left( 2^{\pi(B) + o(\pi(B))}\right)$.
%
% \erik{V1:} This is computationally infeasible for large $B$. For instance, it already takes $O\left( 2^{\pi(B)}\right)$ to simply list the discriminants that one needs to consider.
% Another added complexity comes from the fact that the size of the fundamental solution of the Pell equation is exponential in the size of the discriminant $D$, and therefore doubly exponential in $B$.
% It should be noted that if one is only interested in solutions up to some reasonable bound, say $e^B$, then these solutions can be found in polynomial time in $B$ per discriminant, by computing the continued fraction of $\sqrt{D}$ [CITATION].

\iffullversion
\begin{remark}
	The authors of~\cite{buzek2022finding} found cryptographic smooth twins in this manner. Instead of solving all Pell equations they only solved equations with a small number of prime factors in $D$. This requires choosing a much larger $B$ than the optimal.
\end{remark}
\fi

%\erik{The complexity of the Pell equation method is I think quite tricky. For example, the best known upper bound on the biggest $B$-smooth twin is really bad (doubly exponential in $B$), this means that even you find a fundamental unit in subexponential time, it still takes exponential time in $B$ to check all candidate smooth twins.
%On the other hand, if you are just looking for smooth twins below our heuristic bound of about $e^{2e\sqrt{B}}$, then you only need a polynomial amount of time (in $B$) for each $D$, since you can look at the continued fraction of $\sqrt{D}$ and break once the numbers go above $e^{2e\sqrt{B}}$.
%I do think it is nice to have some complexity statement about the Pell method though, because then it becomes clearer how much faster our method is. If you just say that you have to solve $2^{\pi(B)}$ Pell equations, then your algorithm takes already at least $O(2^{\pi(B)})$ steps, whereas our SVP solver runs in about $O(2^{0.29\pi(B)})$ (since the dimension of the lattice is $\pi(B)$, see section 4.1), which is much better already of course.}

\subsubsection*{Conrey-Holmstrom-McLaughlin (CHM)} This algorithm~\cite{conrey2013smooth} recursively builds an increasing chain of $B$-smooth twins $S^{(0)} = \{ 1 , \dots , B-1 \} \subseteq S^{(1)} \subseteq \dots \subseteq S^{(i)} \subseteq S^{(i+1)} \subseteq \dots \subseteq S^{(n-1)} = S^{(n)}$ until no more twins are found (which is ensured by St{\o}rmer's~\cite{Stormer} result). For each $r,s \in S^{(i)}$ with $r < s$ compute 
\begin{align*}
	\frac{t}{t^{\prime}} = \frac{r}{r+1} \cdot \frac{s+1}{s},
\end{align*}
where $\gcd(t,t^{\prime}) = 1$, and include $t$ in the next set $S^{(i+1)}$ when $t^{\prime} = t+1$. This produces a $B$-smooth twin $(t,t+1)$ and shows the correctness of the recursion.

This procedure produces a large but incomplete set of $B$-smooth twins. It is conjectured that the proportion of $B$-smooth twins not found with CHM is $o(1)$ as $B \to \infty$. The experiments in~\cite{bruno2022cryptographic} report the results with $B = 547$. The largest twin found in that run has $122$ bits which we will see later is far from optimal. %In addition it finds the missing $113$-smooth twins.

\iffullversion
\subsection{Probabilistic Methods}  \label{subsec:probmethod}

The methods described here search for smooth twins of a fixed target size including cryptographic smooth twins. We give a brief account here and refer to~\cite{costello2021sieving,sterner2025towards} for more concrete details.

%One can exhaustively search for $B$-smooth twins using Eratosthenes sieve. To guarantee success in a timely manner $B$ needs to be sufficiently large. Alternatively~\cite{BSIDH,SQISign} chooses two coprime and $B$-smooth integers $\alpha,\beta$ and use the extended Euclidean algorithm to find integer solutions to $\alpha s + \beta t = 1$. Repeating this until $s$ and $t$ are $B$-smooth gives the $B$-smooth twin $(r,r+1)=(|\alpha s|, |\beta t |)$. This improves our chance of success and hence can choose a slightly smaller $B$.
 
\subsubsection*{Searching using polynomial pairs.} The state-of-the-art here is to work with polynomial pairs $f,g \in \Z[x]$ with $g - f \equiv C \in \Z_{>0}$. One carefully evaluates these polynomials to get a smooth twin: $(r,r+1) = (f(m)/C,g(m)/C)$. This strategy is most effective when $\deg(f)$ is not too large and $f,g$ admit nice factorisations. This includes the circumstance when $f,g$ are a product of linear factors~\cite{costello2021sieving}; as well as those with some repeated and slightly larger degree factors~\cite{BSIDH,sterner2025towards}.

%This was first done with the polynomial pair $(f(x),g(x)) = (x^n-1,x^n)$~\cite{BSIDH} where one exploits the cyclotomic factors in $f$. To alleviate the large degree cyclotomic factors, the authors of~\cite{costello2021sieving} suggest to use pairs that split completely into linear factors. Moreover the pairs with repeated linear factors are the most effective and only exist when the degree is small. It was observed in~\cite{} that the lack of repeated factors in the larger degree pairs hinders the smoothness bound and instead uses pairs with repeated linear factors but include some quadratic factors. Depending on the number of quadratic factors and the polynomial degree, this is a favourable tradeoff to reduce the smoothness bound. %and remove comments when fully finished

For smooth twins in the bit range $(240,256]$ the smallest $B$ reported with these polynomials is $B = 17341$. For those with a prime sum, the smallest is $B = 32039$. %Theoretically one should be able to improve this to $B > 1100$ (see \S\ref{sub:optimalestimate})% for justification).

\fi
%!TEX root = ./paper.tex
\section{Smoothness Heuristics}\label{sec:smoothheuristic}

%\erik{Check arguments in 4.2 to prove theorem 1. If happy with them then lets choose one proof (probably the second) and run with that}
%
%\bruno{small intro here maybe}

%\subsection{Distribution of smooth integers}

The Dickman-De Bruijn~\cite{dickman1930frequency,de1951asymptotic} function is the continuously differentiable function $\rho : [0,\infty) \to [0,1]$ that satisfies the following difference differential equation
\begin{align}
\begin{split} \label{eqn:dickdefn}
	\rho(u) & = 1, \qquad \qquad \qquad \quad (0 \leq u \leq 1); \\
	u\rho^{\prime}(u) & = -\rho(u-1), \qquad \qquad \quad (u > 1).
\end{split}
\end{align}
Other than $\rho(u) = 1 - \log(u)$ for $1 < u \leq 2$, there is no closed form for $\rho(u)$ in terms of elementary functions for arbitrary $u > 1$. A simple approximation of this function is $\rho(u) \approx u^{-u}$ and a better one is given by $\rho(u) \approx e^u (u\log(u))^{-u}$. These approximations are derived from the asymptotic expression~\cite[\S3.9]{granville2008smooth}
\begin{align}
    \label{rho_approx}
	\rho(u) = \left( \frac{e+o(1)}{ u\log(u)} \right)^{u},
\end{align}
as well as a more precise statement due to De Bruijn~\cite[Eqn~(1.8)]{de1951asymptotic}. From a computational perspective, numerical methods~\cite{van1969numerical} exist for evaluating this function which are built into many popular computer algebra packages. %(including Magma and SageMath).\bruno{remove $\downarrow$?} % general and
%\begin{align*}
%	\rho(u) & = \exp \left[ -u \left\{ \log(u) + \log(\log(u)) - 1 - \frac{1}{\log(u)} + \frac{\log(\log(u))}{\log(u)} \right. \right. \\
%	& \qquad \qquad \qquad \qquad \left. \left. + O\biggl( \Bigl( \frac{\log(\log(u)))}{\log(u)} \Bigr)^2 \biggr) \right\} \right] .
%\end{align*}

This function is related to the distribution of smooth integers at an asymptotic level. More precisely, if $\Psi(N,B)$ denotes the number of $B$-smooth integers $m \leq N$, then for a fixed $u > 0$ and as $N \to \infty$ we have~\cite{dickman1930frequency,de1951asymptotic}
\begin{align*}
	\Psi(N,N^{1/u}) \sim \rho(u)N.
\end{align*}
While this expression is asymptotic, experimentally $\rho(u)$ is a good estimate for the probability that an integer of size $N$ is $N^{1/u}$-smooth.

\iffullversion
Throughout this article we are interested in smooth integers with small smoothness bounds. In this scenario, for $A > 1$ and as $N \to \infty$, it is known that %$\Psi(N,\log(N)^A) = N^{1-1/A+o(1)}$.
\begin{align*}
	\Psi(N,\log(N)^A) = N^{1-1/A+o(1)}.
\end{align*}
This is a special case of a result due to Rankin which is detailed in~\cite[\S3.10]{granville2008smooth}.
\fi

\subsection{Heuristics on the optimal twin} \label{sub:optimalestimate}

It is believed that the smoothness property in very short intervals is mutually independent.
A more formal and general statement is conjectured which has only been proven in a small range~\cite{Martin1999AnAF}. This fact has been utilised in practice~\cite{costello2021sieving,sterner2025towards} owing to its accuracy for concrete instances.
This conjecture implies that the probability that two close integers of size $N$ are $N^{1/u}$-smooth is $\rho(u)^2$.
Therefore, for large enough $N$, the number of $N^{1/u}$-smooth twins in the interval $[\frac{N}{2}, \frac{3N}{2}]$ is heuristically roughly $\rho(u)^2 N$.
This means that if $\rho(u)^2 N = 1$, then we expect roughly one $N^{1/u}$-smooth twin of size $N$, and not many greater than $N$.
%\bruno{depends what you mean by increase and decrease, cause there's not only $N$ but also $u$ and $B = N^{1/u}$}. For a fixed value of $B$, we can look how large we can take $N$ such that this count is at least 1.
This gives us the following heuristic on the optimal $B$-smooth twin\iffullversion , which we generalise to more than two consecutive smooth numbers. \else . We will use this heuristic in combination with~\eqref{rho_approx} to prove Theorem~\ref{thm:opt}. \fi

%Recall that the set of $B$-smooth twins is finite. So it is not unreasonable to expect smooth twins must be searchable. This means that the probability that two consecutive integers are smooth is at least the reciprocal of the integers -- i.e. there should be at least one. The case when there should be exactly one results in an equality in the probability. With all categorical $113$-smooth twins this is almost always observed. Some rare exceptions for optimal twins dramatically break this rule (e.g. the $7$-smooth twin $(4373,4375)$) but this is expected to occur fewer times as you increase $B$. Moreover one can expect this to continue into the limit as we state in the heuristic.

\iffullversion
\begin{heuristic} \label{heu:opt}
    %As $B \to \infty$, $B$-smooth twins should exist in a searchable bound. More precisely given the independence of smooth integers in short intervals then the largest $B$-smooth twin $(r,r+1)$ satisfies
    %Assuming the independence of smooth integers in short intervals, as $B \to \infty$, the largest $B$-smooth twin $(r,r+1)$ satisfies
    Fix an integer $m \geq 2$ and a sequence of small positive integers $0 = a_0 < a_1 < \dots < a_{m-1} < m^{1+O(1)}$. As $B \to \infty$, the largest integer $r$ such that $r+a_i$ are all $B$-smooth satisfies
	\begin{align*}
		\rho(u) \sim r^{-1/m} 
	\end{align*}
    % \erik{should this be $\rho(u) \sim r^{-1/m}$?}
	where $u = \log(r)/\log(B)$. For the special case giving $B$-smooth twins ($m = 2$ and $a_1 = 1$) the above condition is $\rho(u) = 1/\sqrt{r}$.
\end{heuristic}
\else
\begin{heuristic} \label{heu:opt}
	%As $B \to \infty$, $B$-smooth twins should exist in a searchable bound. More precisely given the independence of smooth integers in short intervals then the largest $B$-smooth twin $(r,r+1)$ satisfies
	%Assuming the independence of smooth integers in short intervals, as $B \to \infty$, the largest $B$-smooth twin $(r,r+1)$ satisfies
	As $B \to \infty$, the largest $B$-smooth twin $(r,r+1)$ satisfies
	\begin{align*}
		\rho(u) \sim \frac{1}{\sqrt{r}}
	\end{align*}
	% \erik{should this be $\rho(u) \sim r^{-1/m}$?}
	where $u = \log(r)/\log(B)$.
\end{heuristic}
\fi

\iffullversion
We get some validity of Theorem~\ref{thm:opt} assuming this heuristic when using the above estimate for $\Psi(N,\log(N)^A)$ with $A = 2$ and $N = r$. To make this more precise we need to solve the equation in Heuristic~\ref{heu:opt}. For instance, solving $\rho(u) = 1/\sqrt{r}$ using the crude estimate $\rho(u) \approx u^{-u}$ gives $\log(r) \approx \log(B) \sqrt{B}$. This additional factor $\log(B)$, which deviates from our Theorem~\ref{thm:opt}, emerges since this approximation deviates quite a bit from its actual value for large $u$. We improve this by replacing the crude estimate with the asymptotic expression in ~\eqref{rho_approx}.
\fi

%To  we use the approximation for $\rho(u)$ given in ~\eqref{rho_approx}, which is more precise than the crude estimate $\rho(u) \approx u^{-u}$.
As a precursor, we need the principal branch of the Lambert $W$ function -- the function $W_0 : \R_{>0} \to \R_{>0}$ such that for any $x > 0$ and $y = W_0(x) > 0$ we have $ye^y = x$. It is known that $W_0(x) = \log(x) - \log(\log(x)) + o(1)$ as $x \to \infty$~\cite{corless1996lambert}. We need a slight modification of this which is summarised in this short lemma.
%
%As a precursor, we need the principal branch of the Lambert $W$ function -- the function $W_0 : \R_{>0} \to \R_{>0}$ such that for any $x > 0$ and $y = W_0(x) > 0$ we have $ye^y = x$. It is known that $W_0(x) = \log(x) - \log(\log(x)) + o(1)$ as $x \to \infty$~\cite{corless1996lambert}. We need a slight modification of this which is summarised in this short lemma.
%%\wessel{princip\textbf{al} branch? I was not aware with the definition of 'principal branch' btw, so maybe this needs some extra explanation?}
\begin{lemma} \label{lem:invlog}
	Consider the function $x \mapsto x \log(x)$ for $x > 1$ (so that $x \log(x) > 0$). Then the principal branch of its inverse is the function $x \mapsto x/W_0(x)$ for $x > 0$.
\end{lemma}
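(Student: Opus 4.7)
The plan is to invert $y = x\log(x)$ explicitly by reducing it to the defining equation of $W_0$. First I would quickly confirm that the map $x \mapsto x\log(x)$ is a strictly increasing bijection from $(1, \infty)$ onto $(0, \infty)$: its derivative $\log(x) + 1$ is positive on that range, and the endpoint limits are $0$ (as $x \to 1^+$) and $\infty$ (as $x \to \infty$), so the principal branch of the inverse is well-defined as a map $(0, \infty) \to (1, \infty)$.

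Next I would substitute $x = e^t$, so that $\log(x) = t$ and the equation $y = x \log(x)$ becomes $y = t e^t$. Since $x > 1$ corresponds to $t > 0$ and $y > 0$, the principal branch of the Lambert $W$ function gives $t = W_0(y)$. Therefore $x = e^{W_0(y)}$.

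Finally, I would apply the defining identity $W_0(y) \, e^{W_0(y)} = y$ to rewrite $e^{W_0(y)} = y/W_0(y)$, yielding the claimed formula $x = y/W_0(y)$. Also, $x/W_0(x) \to 1$ as $x \to 0^+$ (using $W_0(x) \sim x$ near $0$) and $x/W_0(x) \to \infty$ as $x \to \infty$, confirming that the image lies in $(1, \infty)$ as required for consistency.

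The argument is essentially a change of variables followed by one application of the Lambert $W$ identity, so there is no real obstacle; the only minor care needed is the domain bookkeeping (ensuring we stay on the principal branch, i.e., $t > 0$ corresponds to positive real inputs of $W_0$).
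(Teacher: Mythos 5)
Your proof is correct and essentially the same as the paper's: both reduce $y = x\log(x)$ to the defining equation $te^t = y$ of $W_0$ via the substitution $t = \log(x)$ (the paper writes this as $(x/y)e^{x/y} = x$ in its variable convention), and recover $x = y/W_0(y)$. The extra bijectivity and endpoint checks you include are harmless additions but not a different method.
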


\begin{proof}
	Starting from $y \log(y) = x$, with $x > 0$ and $y > 1$, we express $y$ solely in terms of $x$ by first rewriting this as $(x/y)e^{x/y} = x$ and then obtain $x/y = W_0(x)$.% This proves the lemma.
\end{proof}

%In the upcoming proof we need to know the inverse of $x \mapsto x \log(x)$. This is related to the Lambert $W$ function: Starting from $y\log(y) = x$, this can be rewritten as $(x/y)e^{x/y} = x$. For $x,y > 0$ we have $x/y = W_0(x)$, i.e. $y = x/W_0(x)$, and $x \mapsto y = x/W_0(x)$ is the desired inverse to $x \mapsto x \log(x)$.

\begin{proof*}{Proof of Theorem~\ref{thm:opt}}
	We start with $\rho(u) = 1/\sqrt{r}$ from Heuristic~\ref{heu:opt}. Using the asymptotic expression from~\eqref{rho_approx}
%	\begin{align*}
%		\rho(u) = \left( \frac{e+o(1)}{ u\log(u)} \right)^{u} = \frac{1}{\sqrt{r}},
%	\end{align*}
	and $u = \log(r)/\log(B)$, this can be rearranged to
	\begin{align}
		u\log(u) = (e+o(1)) \sqrt{B}. \label{eqn:optproof}
	\end{align}	
	We need to address the $o(1)$ term here since this is a priori with respect to $u$ and not $B$.
    %It is necessary to show that this is also the case for $B$.
    Write $g(u)$ for this $o(1)$ term. We will show that $\lim_{B \to \infty} u = \infty$, which implies that $\lim_{B \to \infty} g(u) = 0$ as well. Let $T \coloneqq \liminf_{B \to \infty} |e + g(u)|$.
	
	%Fortunately this is straightforward to prove.
    Suppose that $T = 0$.
    %Note that the optimal $B$-smooth twin satisfies $r \geq B^2 - 1$ and hence $u = \log(r)/\log(B) \geq 2 - \epsilon$.
    Note that the optimal $B$-smooth twin satisfies $r \geq B^2 - 1$ and hence $u = \log(r)/\log(B) \geq 1.5$ for large enough $B$.
    So the denominator and exponent of~\eqref{rho_approx} are positive for large enough $B$ and we get $\liminf_{B \to \infty} \rho(u) = 0$.
    Since $\rho(u)$ is a strictly descending function for $u > 1$ with limit 0 we must have $\lim_{B \to \infty} u = \infty$.
    This contradicts the assumption that $T = 0$.

    Hence we know that $T > 0$.
    By ~\eqref{eqn:optproof} we have $\liminf_{B \to \infty} |u \log(u)| = \infty$ and, as the map $x \mapsto |x\log(x)|$ has no poles other than $\infty$, we must have $\lim_{B \to \infty} u = \infty$.
    This implies that the $o(1)$ in~\eqref{eqn:optproof} is also with respect to $B$. Using Lemma~\ref{lem:invlog} we can express $u$ as
	\begin{align} \label{eqn:mainproof}
		u & = \frac{(e+o(1)) \sqrt{B}}{W_0\bigl((e+o(1)) \sqrt{B}\bigr)} = \frac{(e+o(1)) \sqrt{B}}{1 + \frac{1}{2}\log(B) - \log\bigl(1+\frac{1}{2}\log(B)\bigr) + o(1)}.% \\
%		& = \frac{(e+o(1)) \sqrt{B}}{\log((e+o(1)) \sqrt{B}) - \log(\log((e+o(1)) \sqrt{B})) + o(1)} \\
%		& = \frac{(e+o(1)) \sqrt{B}}{1 + \frac{1}{2}\log(B) - \log\bigl(1+\frac{1}{2}\log(B))\bigr) + o(1)}.
	\end{align}
	%Concluding the proof amounts to looking at $\log(r)/\sqrt{B} = u \log(B)/\sqrt{B}$ and assessing its limit as $B \to \infty$. By~\eqref{eqn:mainproof} we get $u \log(B)/\sqrt{B} \to 2e$ as $B \to \infty$.
    We can now conclude that
    \[ \lim_{B \to \infty} \log(r)/\sqrt{B} = \lim_{B \to \infty} u \log(B)/\sqrt{B} = 2e. \qedhere \]
    % and proves the Theorem.
%	Suppose that $u$ satisties~\eqref{eqn:optproof} and is bounded in terms of $B$. Then $u = o(1)$ in terms of $B$ since otherwise one gets a contradiction in~\eqref{eqn:optproof}: the left hand side is bounded while the right hand side is unbounded. However $u = o(1)$ also contradicts~\eqref{eqn:optproof} for similar reasons: eventually the left hand side is strictly negative while the right hand ride is eventually strictly positive.
%    \erik{Would it be enough to say that equation 5 implies that $u$ goes to infinity as $B$ goes to infinity (because the RHS goes to inf), and hence the $o(1)$ term that depends on $u$ goes to 0 as $B$ goes to infinity?} \bruno{maybe this is enough, I thought the $u = o(1)$ implies that $u\log(u) \to -\infty$ as $B \to \infty$ (because of the $\log(u)$ part) but it actually goes to $0$ (I think)}
\end{proof*}

%\begin{remark}
%	Another way of interpreting the $o(1)$ discussion in the above proof is that the largest $B$-smooth twins asymptotically grows with respect to $B$ and does not reach an upper limit.
%    \erik{All numbers eventually become smooth when your smoothness bound is large enough, so it is not really remarkable that the largest smooth twin increases when $B$ increases}
%    This is consistent with both St{\o}rmer's result~\cite{Stormer} as well as a result from Hildebrand [Citation] who proved that the set of $r^{\epsilon}$-smooth twins $(r,r+1)$ is infinite (see also \cite[\S5.5]{granville2008smooth}). \bruno{either finish this remark or remove it} \ebw{if a remark like this is not worthwhile then just toss it away}
%    \erik{I at least like to have Hildebrand's result somewhere, it's nice}
%\end{remark}

%The mutual independence of smooth integers in short intervals means that this can easily be generalised to arbitrary sequences of smooth integers. For sequences of length $n$, the equation underlying the Heuristic~\ref{heu:opt} for the largest such instance, call it $m$, becomes $\rho(u) = 1/m^{1/n}$. One can mimic the accomplished proof for this general purpose and prove the following.

\iffullversion
One can mimic this proof of Theorem~\ref{thm:opt} in the more general setting of smooth integers in short intervals. We simply state the result in the following theorem. %This gives an asymptotic on largest sequence smooth integers in short intervals. 

\begin{theorem}[More generally]
	Fix an integer $m \geq 2$ and a sequence of small positive integers $a_0 = 0 < a_1 < \dots < a_{m-1} < m^{1+O(1)}$. Let $r$ be the largest integer such that $r+a_i$ is $B$-smooth for all $i = 0,\dots,m-1$. Assuming the generalised form of Heuristic~\ref{heu:opt}, as $B \to \infty$ we have the asymptotic
	\begin{align*}
		r \sim e^{meB^{1/m}}.
	\end{align*}
%\erik{To do: compare this with the largest $B$-smooth triples we have.}
%    \erik{This is quite a cool theorem. I would be nice to see how well it holds up for smooth triples as well. I think this shouldn't be too hard with our data. Because if we fix some $B$ such that we think we have found most $B$-smooth twins in our database. Then searching for smooth triples can be done by checking the neighbours of each smooth twin, thus giving us the largest $B$-smooth triple.} \bruno{Doing a comparison between the theorem and the $B$-smooth triples would require numerical calculations à la $\downarrow$, since this is asymptotic. Can easily be done...}
\end{theorem}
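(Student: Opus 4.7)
The plan is to mimic the argument used in the proof of Theorem~\ref{thm:opt} almost verbatim, with $1/\sqrt{r}$ replaced by $r^{-1/m}$ throughout. Starting from the generalised Heuristic~\ref{heu:opt}, namely $\rho(u) \sim r^{-1/m}$ with $u = \log(r)/\log(B)$, the goal is to solve this asymptotically for $\log(r)/B^{1/m}$ and show the limit is $me$.

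First, I would substitute the asymptotic expansion $\rho(u) = \left((e+o(1))/(u\log u)\right)^u$ from~\eqref{rho_approx} into the heuristic equation, take logarithms, divide by $u$, and exponentiate. The resulting relation generalising~\eqref{eqn:optproof} should read
\begin{align*}
    u\log(u) = (e+o(1))\, B^{1/m},
\end{align*}
where the only difference from the case $m=2$ is the exponent on $B$. The factor $e$ survives the generalisation because it came from the $\log(e+o(1))$ inside the expansion of $\rho$, which is independent of $m$; the $1/m$ exponent arises solely from the $r^{-1/m}$ on the right-hand side.

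Next, I would invert using Lemma~\ref{lem:invlog} to get
\begin{align*}
    u = \frac{(e+o(1))\, B^{1/m}}{W_0\bigl((e+o(1))\, B^{1/m}\bigr)},
\end{align*}
and expand the Lambert $W$ via $W_0(x) = \log(x) - \log(\log(x)) + o(1)$. Since $\log((e+o(1))B^{1/m}) = (1/m)\log(B) + O(1)$, the denominator is $(1/m)\log(B)(1+o(1))$, so $u \log(B)/B^{1/m} \to me$, which is exactly $\log(r)/B^{1/m} \to me$ and gives the claimed asymptotic $r \sim e^{me B^{1/m}}$.

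The only subtle step, as in the original proof, is justifying that the $o(1)$ terms coming out of~\eqref{rho_approx} (which are a priori in the variable $u$) can be treated as $o(1)$ in the variable $B$. This is the main (and essentially only) obstacle, and it is handled by the same observation: the relation $u\log(u) = (e+o(1))B^{1/m}$ forces $u\log(u) \to \infty$ as $B \to \infty$, and since $x \mapsto x\log(x)$ is unbounded only at infinity, we conclude $u \to \infty$, so any quantity that is $o(1)$ in $u$ is also $o(1)$ in $B$. The constraint $a_{m-1} < m^{1+O(1)}$ on the shift sequence is needed only so that the tuple $(r,r+a_1,\dots,r+a_{m-1})$ still lies in a short interval relative to $r$, which is precisely the regime in which the mutual-independence heuristic underlying Heuristic~\ref{heu:opt} is invoked; it does not intervene in the analytic computation above.
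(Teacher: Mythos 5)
Your proposal is correct and is precisely what the paper intends: the paper omits the proof of this theorem entirely, stating only that ``one can mimic this proof of Theorem~\ref{thm:opt},'' and your argument carries out that mimicry faithfully — the relation $u\log(u)=(e+o(1))B^{1/m}$, the inversion via Lemma~\ref{lem:invlog}, and the justification that $u\to\infty$ as $B\to\infty$ all generalise exactly as you describe. Note that the final step from $\log(r)/B^{1/m}\to me$ to $r\sim e^{meB^{1/m}}$ is the same (slightly abusive) convention the paper itself uses in Theorem~\ref{thm:opt}, so no discrepancy arises.
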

\fi

\subsubsection*{Numerical computations for small $B$.}

Theorem~\ref{thm:opt} gives a strong description of the asymptotic behaviour of optimal $B$-smooth twins. However for concrete and small $B$ this is not effective and one needs a more explicit solution to the equation $r \rho(u)^2 = 1$ underlying Heuristic~\ref{heu:opt}. One cannot solve this equation exactly and so we resort to numerical techniques to approximate the solution. 

% The straightforward approach is to employ the bisection method: start with two $r$ (one of which is the asymptotic expression in Theorem~\ref{thm:opt}) such that $r \rho(u)^2$ is $>1$ for one of them and $<1$ for the other \erik{how do you get the other $r$ then? Perhaps this is too vague and we should not go into these details}.
% Then one choose \erik{chooses} the median of these two $r$'s and see whether $r \rho(u)^2$ is $>1$ or $<1$. Repeating this sufficiently many times produces an approximation to the solution of $r \rho(u)^2 = 1$. For relatively small $B$ this approach is more than sufficient for the task at hand. One could also employ more sophisticated techniques such as Newton's numerical method (and exploit the differentiable condition from~\eqref{eqn:dickdefn}). Here one would expect a faster rate of convergence compared to the bisection method and hence would be more practical for much larger $B$. %\bruno{some citations? what can we use??} \wessel{suggest ideas if you have any?}
% \erik{Possible rewrite:\\
One starts with an initial guess for $r$, such as the one given in Theorem~\ref{thm:opt}, and uses a root-finding algorithm, such as the \emph{bisection} or \emph{Newton's method}, to iteratively get a better approximation for $r$.
One can exploit the differentiable condition (see~\eqref{eqn:dickdefn}) for Newton's method and get a faster convergence rate. % compared to the bisection method.
But we found that the bisection method was sufficiently fast for the values of $B$ we were interested in.

\iffullversion
\begin{figure}[ht]
	\centering
	\resizebox{\textwidth}{!}{
	\includegraphics{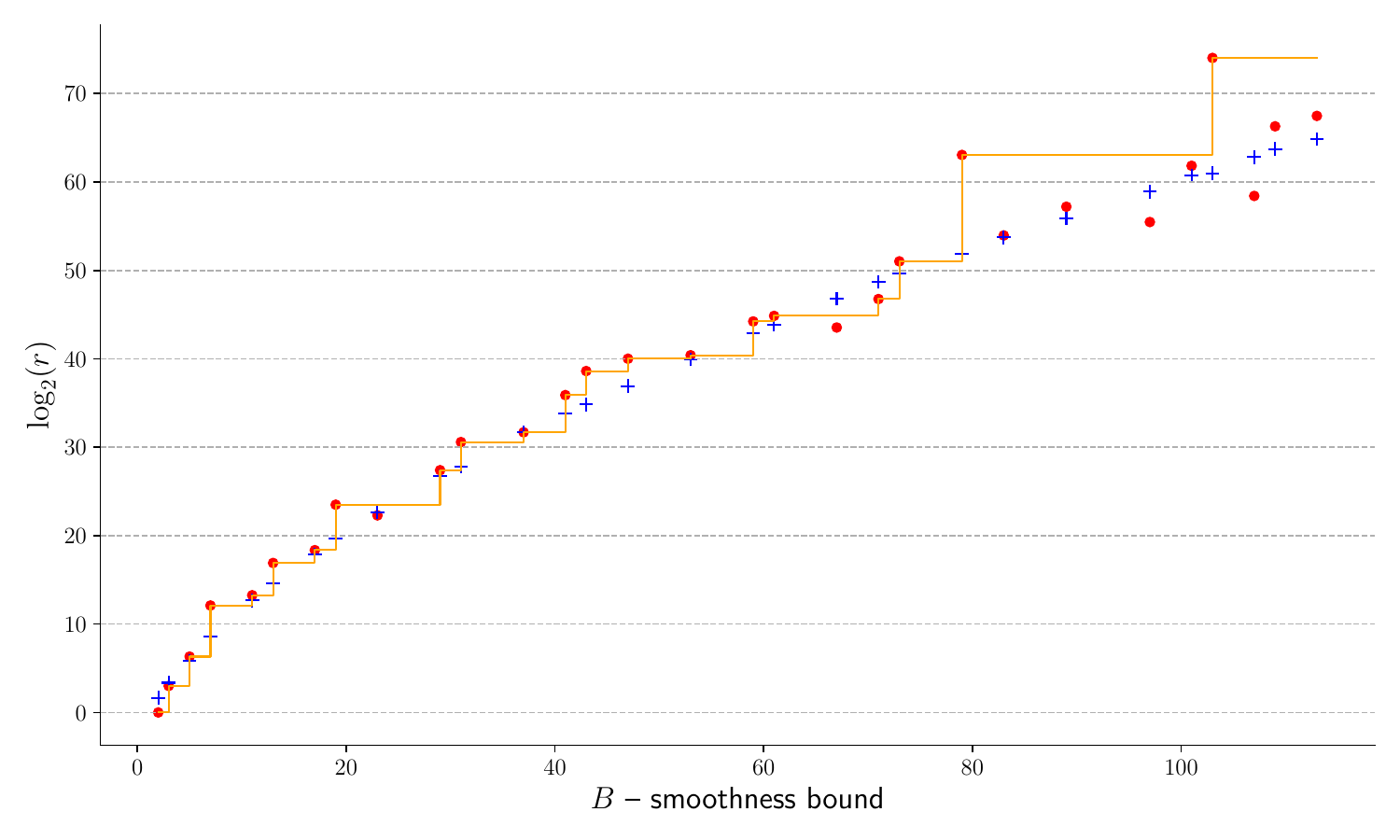}}
	\caption{Comparing the numerical estimates for optimal $B$-smooth twins (marked with a blue plus) against the provable and heuristic optimal twins with $B \leq 113$. The red dots mark the sizes of the largest \emph{strictly} $B$-smooth twins, while the orange line indicates the largest twin up to smoothness bound $B$. }
	% \erik{there needs to be some explanation as to why the red dots in this graph are not monotonic. We use this alternative definition of optimal $B$-smooth twins multiple times in the article. Later we use the term strictly $B$-smooth I believe.}} %\erik{Does 'numerical estimate' mean that we use the 'exact' value of $\rho(u)$ to solve for $N \rho(u)^2 = 1$, instead of using an asymptotic approximation such as \ref{rho_approx}?}}
    
	\label{fig:opttwins113}
\end{figure}
\fi

For small $B \leq 113$ these numerical estimates for optimal twins correlate well to actual optimal twins found by solving Pell equations, see \iffullversion Figure~\ref{fig:opttwins113} \else Figure~\ref{fig:opttwins547}\fi. As a result we estimate the size of optimal twins for larger $B$, see Table~\ref{tab:optsize}. %We will see later that these approximations for the smaller $B$ correlate well with twins found in this work.

\begin{table}
	\centering
	\renewcommand{\tabcolsep}{0.2cm}
	\renewcommand{\arraystretch}{1.15}
	\resizebox{0.875\textwidth}{!}{
	\begin{tabular}{cc}
		Smoothness bound $B$ & Approximate $\log_2(r)$ \\
		\hline
		$200$ & $91.138890$ \\
		$500$ & $154.417132$ \\
		$750$ & $193.534196$ \\
		$1000$ & $226.657489$ \\
		$1100$ & $238.750073$ \\
		$1250$ & $255.920233$
	\end{tabular}\hspace{1em}
	\begin{tabular}{cc}
		Smoothness bound $B$ & Approximate $\log_2(r)$ \\
		\hline
		$1500$ & $282.426575$ \\
		$2000$ & $329.581941$ \\
		$2500$ & $371.202628$ \\
		$3000$ & $408.877910$ \\
		$4500$ & $506.210979$ \\
		$10000$ & $767.101783$
	\end{tabular}}
	\vspace{0.5ex}
	\caption{Heuristic size of optimal twins for various smoothness bounds $B$, i.e. approximate solutions to $\rho(u) = 1/\sqrt{r}$.}
	\label{tab:optsize}
\end{table} 

%\begin{remark}
%	\bruno{maybe comment on the error $| r_{\mathrm{opt}} - r_{\mathrm{est-opt}} |$ if we can say something (suggestion from Marc)}
%\end{remark}

%!TEX root = ./paper.tex
\section{Lattice Theory}\label{sec:latticeprelims}
A lattice $\lat$ is a discrete subgroup of the Euclidean space $\R^m$. Equivalently, any lattice $\lat$ can be described by a basis $\Bm \in \R^{m \times n}$ such that
$$\lat = \lat(\Bm) := \left\{ \sum_{i=1}^n x_i \bv_i : x_i \in \Z  \right\} ,$$
with $\R$-linearly independent columns $\bv_1, \ldots, \bv_n$. We denote $\rk(\lat) = n$ for the \emph{rank} of the lattice, which coincides with the dimension of its $\R$-linear span.

The discrete nature of a lattice allows us to define some geometric properties.~The minimal distance between any two distinct lattice points is called its \emph{first minimum}, denoted by $\lambda_1(\lat)$. Equivalently, it equals the length of the shortest non-zero vector:
%For a lattice $\lat$ the minimal distance between any two distinct lattice points is called its \emph{first minimum}, denoted by $\lambda_1(\lat)$. Equivalently, it equals the length of the shortest non-zero vector:
$$\lambda_1(\lat) := \min_{\vv \in \lat \setminus \{ \zero \}} \norm{\vv}.$$
Next, we have the (co)volume $\vol(\lat)$ of a lattice which equals the volume of the quotient space $\spa_\R(\lat) / \lat$. For any basis $\Bm$ of $\lat$ we have:
$$\vol(\lat) := \vol(\spa_\R(\lat) / \lat) = \det(\Bm^\top \Bm)^{1/2}.$$
The volume $\vol(\lat)$ of a lattice can be seen as the inverse density of the lattice, i.e., $\vol(\lat)^{-1}$ is roughly the number of lattice vectors per unit volume. 
In a dense enough lattice of rank $n$ there must always be some short non-zero lattice vector, which is quantified by Minkowski's theorem saying that
$$\lambda_1(\lat) \leq \frac{2\vol(\lat)^{1/n}}{\vol(\ball^n)^{1/n}} \approx \sqrt{2n/\pi e} \cdot \vol(\lat)^{1/n},$$
where $\vol(\ball^n)$ is the volume of a unit ball of dimension $n$. 
In a typical \emph{random lattice} \iffullversion \footnote{One can make this precise by considering a finite measure defined on the space of lattices induced by the Haar measure on $\text{SL}_n(\R)$.}\fi one can give a more precise estimate of the first minimum.
In practice, we usually never have precisely such a random lattice, however, unless they have some special structure, most lattices still satisfy the following simplified heuristic.

\begin{heuristic}[Gaussian Heuristic]\label{heuristic:gh}
Let $\lat$ be a lattice and let $S \subset \spa_\R(\lat)$ be a sufficiently nice body with positive Euclidean volume. Then the expected number of non-zero lattice points in $S$ roughly equals
$| \lat \cap S \setminus \{ 0 \} | \approx \vol_{\spa(\lat)}(S)/\vol(\lat). $ % \frac{\vol_{\spa(\lat)}(S)}{\vol(\lat)}. $$
In particular, for a lattice of rank $n$ this implies that
$$\lambda_1(\lat) \approx \gh(\lat) := \frac{\vol(\lat)^{1/n}}{\vol(\ball^n)^{1/n}} \approx \sqrt{n/2\pi e} \cdot \vol(\lat)^{1/n}.$$
\end{heuristic} 

\subsection{The shortest vector problem (SVP)} \label{subsec:svp}
One of the main computational lattice problems is that of finding short non-zero lattice vectors. 
Finding a lattice vector $\vv \in \lat$ of minimal length $\norm{\vv} = \lambda_1(\lat)$ is known as the shortest vector problem (SVP). While the best provable algorithm for SVP runs in time and space $2^{n+o(n)}$ in the rank $n$ of the lattice~\cite{aggarwal2015solving,aggarwal2015solvingdet}, heuristic versions can be much better.
The fastest heuristic algorithm for SVP runs in time $2^{0.292n+o(n)}$ and space $2^{0.2075n+o(n)}$~\cite{becker2016new}. We commonly refer to these algorithms as \emph{lattice sieving} algorithms. % (3/2)^{n/2+o(n)} \approx ........ (4/3)^{n/2+o(n)} \approx 
In practice, these algorithms can solve SVP up to dimensions $150$ in reasonable time~\cite{albrecht2019general}, with record computations stretching up to dimension $200$~\cite{ducas2021advanced,zhao2025sieving}.

The above complexities are for finding an (almost) shortest vector in a typical lattice for which $\lambda_1(\lat) \approx \gh(\lat)$.
%\erik{Does our lattice also follow this rule? If I recall correctly, we found out that this was not the case. If not, then then we need to take a close look at the later sections, since there we do use this assumption I think}. 
These seem to be the hardest cases to find a shortest vector both in theory as in practice.
The lattice we will consider has some special structure and therefore does not always fall into this regime.
For example, when the short vector $\vv \in \lat$ we aim to find is shorter than what the Gaussian Heuristic predicts.
Alternatively, even if the lattice satisfies $\lambda_1(\lat) \approx \gh(\lat)$, the short vector $\vv \in \lat$ we aim to find does not always have to be the shortest vector.   
%We consider these two regimes or variants of SVP. 

\subsubsection*{The case $\norm{\vv} < \gh(\lat)$.} 
We call a shortest vector $\vv \in \lat$ \emph{unusually short} if $\norm{\vv} = \lambda_1(\lat) < \gh(\lat)$. 
Finding such an unusually short vector is typically easier\iffullversion . In particular, such a short vector can be found\fi ~by running a lattice reduction algorithm such as BKZ~\cite{schnorr1994lattice} which makes polynomially many calls to an exact SVP oracle in dimension $\beta \leq n$\iffullversion ~and thus heuristically runs in time $2^{0.292\beta+o(n)}$ \bruno{this runtime could and probably should go in both full and submitted versions}\fi . The dimension $\beta$ depends on $n$ and the ratio $\gh(\lat)/\lambda_1(\lat)$ which is called the \emph{gap}. The larger the gap, the lower $\beta$ will be relative to $n$ in order to successfully recover the unusually short vector.

\subsubsection*{The case $\norm{\vv} \geq \gh(\lat)$.}
We consider the case where one wants to find a sufficiently short vector $\vv \in \lat$ for which $\gh(\lat) \leq \norm{\vv} \leq C \cdot \gh(\lat)$ for some $C \geq 1$.
Finding any such short vector is typically easier than finding a shortest vector, again by the use of a lattice reduction algorithm.
In fact, in a typical lattice there are according to the Gaussian Heuristic roughly $C^n$ of such vectors.
However, in our case we will be interested in finding a specific (or a few specific) vectors among those $C^n$ vectors.
Finding a needle in such a haystack will quickly become infeasible, and thus we are typically constrained to quite low values of $C$. The current best heuristic algorithms for SVP naturally compute almost all vectors up to length $\sqrt{4/3} \cdot \gh(\lat)$ and thus in practice we typically constrain ourselves to $C \leq \sqrt{4/3}$ in large dimensions.

\subsubsection*{Dimensions for free}
%Because the best heuristic algorithms for SVP do not only compute a shortest vector but almost all vectors up to length $\sqrt{4/3} \cdot \gh(\lat)$,
%% \erik{this is stated in the line above, so perhaps too recent for a `recall that...'}
%one can try to use this fact if one is only interested in a single shortest vector. 
%Instead of finding almost all vectors of length up to $\sqrt{4/3} \cdot \gh(\lat)$, one might only be interested in a single shortest vector. 
%Namely, given some short (primitive) lattice vectors $\bv_1, \ldots, \bv_k \in \lat$ one can consider the projected lattice $\pi(\lat)$ of rank $n-k$ where $\pi$ projects away from these $k$ lattice vectors. For a shortest vector $\vv \in \lat$ its projection $\pi(\vv)$ might still be shorter than $\sqrt{4/3} \cdot \gh(\pi(\lat))$ and thus $\pi(v)$ can be found by running the SVP algorithm on $\pi(\lat)$. Then $v$ can be recovered by an appropriate lifting.
One can consider the projected lattice $\pi(\lat)$ of rank $n-k$ where $\pi$ projects away from $k$ (primitive) lattice vectors. For a shortest vector $\vv \in \lat$ (with $\norm{\vv} \approx \gh(\lat)$) its projection $\pi(\vv)$ might have $\norm{\pi(\vv)} \leq \sqrt{4/3} \cdot \gh(\pi(\lat))$ and thus $\pi(\vv)$ can be found by running the SVP algorithm on $\pi(\lat)$. Then $\vv$ can be recovered by an appropriate lifting.
One thus decreases the rank of the SVP by $k$ dimensions, which for typical lattices can heuristically be picked as $k = \theta(n / \log(n))$~\cite{ducas2018shortest}. %In practice values of $10$ to $35$ are typical.
% \wessel{Not so happy with this paragraph on D4F, maybe it is a bit too detailed and we should just leave it as a reference or something.}  \bruno{This should really only be used for large lattice dimensions as a time-saving measure in the SVP problem, right?}

% \wessel{lattice dump, should not be too long} \bruno{start this}

% For the purpose of this article, a lattice is a discrete subgroup of Euclidean space $\R^n$

% To do's:
% \begin{itemize}
% 	\item Lattice one line defi
% \end{itemize}
%!TEX root = ./paper.tex
\section{Generalised Prime Number Lattice}\label{sec:smoothratlat}

% \wessel{Tidy up lattice aspects of this. Some comments are scattered around} %\bruno{Should the section title include ``generalised"??}

Here we reintroduce and generalise the prime number lattice that will be the main subject throughout the rest of this work. We remark that other names have been used for the lattice including the ``Schnorr-Adleman lattice". 

%\subsection{Lattice}

Let $\alpha, \alpha_i \in \R$, for $i \in \{ 1 , \dots , n \}$, and $P = \{ p_i \}_{i=1}^n$ be a finite subset of distinct primes of size $n$. We call the $\alpha_i$'s \emph{weights} and call $P$ a \emph{factor base}. We say that a positive integer is \emph{$P$-smooth} if all its prime divisors are in $P$. 
Note that for $P = P_B = \{ p \leq B \}$ this coincides with being $B$-smooth.

\begin{definition}
	We define the \emph{prime number lattice}  adjoined to $\alpha$, $\alpha_i$ and a factor base $P$, denoted $\lat_{\alpha,\alpha_i,P}$, to be the lattice with the following basis matrix:
	\begin{align*}
	\Bm_{\alpha, \alpha_i, P} \coloneqq \left( \begin{array}{cccc}
	\log(p_1)\alpha & \log(p_2)\alpha  & \cdots & \log(p_n)\alpha \\
%	\log(p_1) & 0  & \cdots & 0 \\
	\alpha_1 & 0  & \cdots & 0 \\
%	0 & \log(p_2) & \cdots & 0 \\
	0 & \alpha_2 & \cdots & 0 \\
	\vdots & \vdots & \ddots & \vdots \\
%	0 & 0 & \cdots & \log(p_n)
	0 & 0 & \cdots & \alpha_n
	\end{array} \right).
	\end{align*}
	When $P = P_B$ we call this lattice the \emph{full prime number lattice} (or simply the \emph{full lattice}) with respect to the smoothness bound $B$.
\end{definition}

Besides the relevance of this lattice to integer factoring which was alluded to in the introduction, the lattice also plays a role in lattice sphere packings, where it was used to prove the NP-hardness of SVP under randomised reductions \cite{micciancio2002complexity}. Historically one restricts the weights $\alpha_i$ to be either $\log(p_i)$ or $\sqrt{\log(p_i)}$. We write $\alpha$ and $\alpha_i$ abstractly for the time being and make concrete choices later on.

A vector $\Bm_{\alpha, \alpha_i, P} \xv$ in this lattice where $\xv \in \Z^n$ can be associated to a quotient of coprime $P$-smooth integers $a,b$ which we call a \emph{$P$-smooth rational} (or just a \emph{smooth rational}). The concrete smooth rational is apparent in the first entry of the lattice vector: $\alpha \sum_{i=1}^n \log(p_i) x_i = \alpha \log(a/b)$. Moreover, the association goes the other way since $x_i = \mathrm{val}_{p_i}(a/b)$, so we get the following correspondence:

\begin{lemma} \label{lem:genvec}
	For each $\alpha$ and $\alpha_i$, there is a one-to-one correspondence\footnote{A more appropriate name for this lattice might be the ``smooth rational lattice" given this correspondence. But we chose to call it the ``prime number lattice" for historical context.} between $P$-smooth rationals and lattice vectors in $\lat_{\alpha,\alpha_i,P}$ given by:
	$$\frac{a}{b} = \prod_{i=1}^n p_i^{x_i} \in \Q \longleftrightarrow \Bm_{\alpha, \alpha_i, P} \xv \in \lat_{\alpha,\alpha_i,P} .$$
\end{lemma}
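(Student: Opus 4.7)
The plan is to decompose the claimed bijection as the composition of two bijections: one between $P$-smooth rationals and $\Z^n$ given by unique factorisation, and another between $\Z^n$ and $\lat_{\alpha,\alpha_i,P}$ given by the basis matrix. The proof is essentially bookkeeping once these two pieces are identified.

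First I would invoke the Fundamental Theorem of Arithmetic, applied to numerator and denominator separately, to observe that any $P$-smooth rational $q \in \Q_{>0}$ admits a unique representation $q = \prod_{i=1}^n p_i^{x_i}$ with $x_i \in \Z$, where negative exponents correspond to prime factors appearing in the denominator. This gives a bijection $\phi: q \longleftrightarrow (x_1, \dots, x_n)$ between $P$-smooth rationals and $\Z^n$, with inverse $x_i = \val_{p_i}(q)$ (as noted in the discussion preceding the lemma).

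Next, since $\Bm_{\alpha, \alpha_i, P}$ is by definition a basis of $\lat_{\alpha,\alpha_i,P}$, its columns are $\R$-linearly independent — which is ensured as soon as each weight $\alpha_i \neq 0$, since the bottom $n$ rows form a diagonal matrix with diagonal entries $\alpha_1, \dots, \alpha_n$. Consequently the map $\psi: \xv \mapsto \Bm_{\alpha,\alpha_i,P} \xv$ is a bijection between $\Z^n$ and $\lat_{\alpha,\alpha_i,P}$ (surjectivity is the definition of $\lat$, injectivity follows from linear independence of the columns).

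Finally I would verify that the composition $\psi \circ \phi$ is precisely the correspondence in the statement. A direct computation of the first coordinate of $\Bm_{\alpha,\alpha_i,P} \xv$ yields
\[
\alpha \sum_{i=1}^n \log(p_i)\, x_i \;=\; \alpha \log\!\left(\prod_{i=1}^n p_i^{x_i}\right) \;=\; \alpha \log(q),
\]
while the remaining coordinates are $\alpha_i x_i = \alpha_i \val_{p_i}(q)$, so the vector $\Bm_{\alpha,\alpha_i,P} \xv$ is indeed the lattice point associated to $q = \prod p_i^{x_i}$, and the lemma follows. There is no real obstacle here; the only subtlety worth flagging is the implicit assumption that all weights $\alpha_i$ are nonzero so that $\Bm_{\alpha,\alpha_i,P}$ is genuinely a basis.
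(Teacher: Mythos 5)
Your proof is correct and follows essentially the same route as the paper, which establishes the correspondence in the discussion immediately preceding the lemma: the forward direction reads off $\alpha\log(a/b)$ from the first coordinate and the inverse is $x_i = \val_{p_i}(a/b)$, exactly your $\phi$ and $\psi$ composed. Your explicit flag that the $\alpha_i$ must be nonzero for $\Bm_{\alpha,\alpha_i,P}$ to be a basis is a reasonable addition; the paper addresses the same point by noting that taking $\alpha_i = 0$ would destroy discreteness.
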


% The other entries in the lattice vector are $\alpha_i x_i$ for each $i$. 
% These help to balance out the contribution of the first coefficient $\alpha \log(a/b)$ with the size of the exponents $|x_i|$, as with unbounded exponents $\log(a/b)$ could be arbitrarily close to $0$ \erik{do you mean that if the weights would be zero, then a \textit{short} vector could have $\log(a/b)$ really small?}.
% \erik{Possible rewrite:\\
The other entries in the lattice vector are $\alpha_i x_i$ for each $i$. 
These help to balance out the contribution of the first entry $\alpha \log(a/b)$ with the size of the exponents $|x_i|$.
\iffullversion If these $\alpha_i$ would be $0$ instead, then $\lat_{\alpha,\alpha_i,P}$ would no longer be discrete (and thus a lattice) and $\log(a/b)$ could be arbitrarily close to $0$, leading to a very short vector without producing a smooth twin. \bruno{This last sentence is confusing and I'm thinking to just remove it from full version}\fi
% \\ Or just remove entirely?}
% An informal way of interpreting these entries is that it says how relevant the prime $p_i$ is to the smooth rational $a/b$ relative to the weight $\alpha_i$. \bruno{rewrite idk}

\subsection{Properties of the prime number lattice}
%Consider the notation from before with $\alpha_i = \log(p_i)$.
Due to Lemma~\ref{lem:genvec} the length of each vector can directly be computed from the smooth rational $a/b$ and its rational prime factorisation. Such lengths should be considered relative to the Gaussian Heuristic and thus the \iffullversion (normalised) \fi volume of the lattice. This gives the following lemmas.
\begin{lemma} \label{lem:normvec}
	Let $\vv = \Bm_{\alpha, \alpha_i, P} \xv \in \lat_{\alpha,\alpha_i,P}$ be a lattice vector corresponding to a $P$-smooth rational $a/b = \prod_{i=1}^n p_i^{x_i}$ with $\gcd(a,b) = 1$. Then we have
	\begin{align*}
		\norm{\vv}^2 & = \alpha^2 \cdot \log\left(\frac{a}{b}\right)^2 + \sum_{i=1}^n (x_i \alpha_i)^2.
	\end{align*}
	 Moreover we have the following bounds$$\frac{(\sum_{i=1}^n | x_i \alpha_i |)^2}{n} \leq \norm{\vv}^2 - \alpha^2 \cdot \log\left(\frac{a}{b}\right)^2 \leq \Bigl(\sum_{i=1}^n | x_i \alpha_i |\Bigr)^2.$$ 
	%and with the choice $\alpha_i = \log(p_i)$ we have $(\sum_{i=1}^n | x_i \alpha_i |)^2 = (\log(a)+\log(b))^2$. %\wessel{Question from Bruno: does the second half of the lemma need stating? This difference corresponds to $\beta_2$ later. In the analysis with $\alpha_i = \sqrt{\log(p_i)}$ (section 5.1) I do not refer back to this lemma as I derive bounds myself. Maybe you'll use it somewhere else (idk)}
    % \erik{If we use this second part then it needs a (short) proof.}
\end{lemma}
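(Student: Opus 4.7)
The plan is to split the proof into two independent computations: the exact norm formula, and the two inequalities relating $\sum (x_i\alpha_i)^2$ to $\sum |x_i\alpha_i|$.

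For the equality, I would simply read off the coordinates of $\vv = \Bm_{\alpha,\alpha_i,P}\xv$ from the definition of the basis matrix. The first coordinate is $\alpha \sum_{i=1}^n \log(p_i)\, x_i$, which by the uniqueness of prime factorisation and the correspondence in Lemma~\ref{lem:genvec} equals $\alpha \log(a/b)$. Because the rest of $\Bm_{\alpha,\alpha_i,P}$ is diagonal with entries $\alpha_i$, the remaining $n$ coordinates of $\vv$ are exactly $\alpha_i x_i$. Squaring and summing then yields
\begin{align*}
\norm{\vv}^2 = \alpha^2 \log(a/b)^2 + \sum_{i=1}^n (x_i\alpha_i)^2,
\end{align*}
which is the claimed identity.

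For the two inequalities, after subtracting the $\alpha^2\log(a/b)^2$ term it suffices to bound $\sum_{i=1}^n (x_i\alpha_i)^2$ from above and below in terms of $\sum_{i=1}^n |x_i\alpha_i|$. Set $y_i = |x_i\alpha_i|\geq 0$. The upper bound is immediate from expanding
\begin{align*}
\Bigl(\sum_{i=1}^n y_i\Bigr)^2 = \sum_{i=1}^n y_i^2 + 2\sum_{i<j} y_iy_j \geq \sum_{i=1}^n y_i^2,
\end{align*}
since all cross terms are non-negative. The lower bound is the standard Cauchy--Schwarz (or equivalently the QM--AM inequality) applied to the vectors $(y_1,\dots,y_n)$ and $(1,\dots,1)$:
\begin{align*}
\Bigl(\sum_{i=1}^n y_i\Bigr)^2 \leq n \sum_{i=1}^n y_i^2,
\end{align*}
giving the stated inequality after dividing by $n$.

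Nothing here is delicate: there is no real obstacle, only bookkeeping. The only mildly non-trivial point worth flagging is why the first coordinate really equals $\alpha\log(a/b)$ and not, say, $\alpha\log|a/b|$ with a sign ambiguity; this is handled by noting that $a/b>0$ as a product of positive primes (with signed exponents), so the logarithm is real and the formula is unambiguous.
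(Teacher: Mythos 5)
Your proof is correct and follows essentially the same route as the paper: the equality by reading off the coordinates of $\vv$ via the correspondence of Lemma~\ref{lem:genvec}, and the bounds by the standard comparison between the $1$-norm and $2$-norm of the vector $(x_i\alpha_i)_i$, which you simply spell out via expanding the square and Cauchy--Schwarz rather than citing the norm inequalities directly.
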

\begin{proof}
	The first statement follows from \iffullversion the correspondence in \fi Lemma~\ref{lem:genvec}\iffullversion . For the bounds, note that $\norm{\vv}^2 - \alpha^2 \cdot \log\left(\frac{a}{b}\right)^2 = \sum_{i=1}^n (x_i\alpha_i)^2 = \norm{\yv}^2$ where $\yv := (x_i\alpha_i)_{i=1}^n$. The result then \else ~ and the second \fi follows from the general norm-inequality $\frac{1}{n}\norm{\yv}_1^2 \leq \norm{\yv}_2^2 \leq \norm{\yv}_1^2$\iffullversion . \else ~where $\yv := (x_i\alpha_i)_{i=1}^n$. \fi
\end{proof}

\begin{lemma}[Volume] \label{lem:volume}
	We have $$\vol(\lat_{\alpha, \alpha_i, P}) = \sqrt{\alpha^2 \cdot \sum_{i=1}^n (\log(p_i)/\alpha_i)^2+1} \cdot \prod_{i=1}^n \alpha_i.$$
	If $\alpha_i := \log(p_i)$, the above simplifies to $\vol(\lat_{\alpha, \log(p_i), P}) = \sqrt{\alpha^2 n + 1} \cdot \prod_{i=1}^n \log(p_i)$.
	% We have $$\gh(\lat_{\alpha,P}) \approx \sqrt{n/2 \pi e} \cdot \det(\lat_{\alpha,P})^{1/n} = \sqrt{n/2 \pi e} \cdot  \sqrt{\alpha^2n+1}^{1/n} \cdot \left( \prod_{i=1}^n \log(p_i) \right)^{1/n}.$$
\end{lemma}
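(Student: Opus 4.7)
The plan is to compute the volume directly from its definition $\vol(\lat) = \sqrt{\det(\Bm^\top \Bm)}$, exploiting the fact that the Gram matrix has a very clean rank-one-plus-diagonal structure.

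First, I would set $\wv = (\log(p_1), \ldots, \log(p_n))^\top$ and $\Dm = \operatorname{diag}(\alpha_1, \ldots, \alpha_n)$. A direct computation of the inner products of the columns of $\Bm_{\alpha, \alpha_i, P}$ gives
\begin{align*}
\Bm_{\alpha, \alpha_i, P}^\top \Bm_{\alpha, \alpha_i, P} = \Dm^2 + \alpha^2 \, \wv \wv^\top,
\end{align*}
since off-diagonal entries receive only the contribution from the top row, while diagonal entries receive an additional $\alpha_i^2$ from the identity block below.

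Next, I would apply the matrix determinant lemma (or equivalently Sylvester's identity) to this rank-one update. This yields
\begin{align*}
\det(\Dm^2 + \alpha^2 \wv \wv^\top) = \det(\Dm^2) \cdot \bigl(1 + \alpha^2 \, \wv^\top \Dm^{-2} \wv\bigr) = \Bigl(\prod_{i=1}^n \alpha_i^2\Bigr) \cdot \Bigl(1 + \alpha^2 \sum_{i=1}^n (\log(p_i)/\alpha_i)^2\Bigr).
\end{align*}
Taking square roots gives the first claim. The second claim follows by substituting $\alpha_i = \log(p_i)$, which reduces $\sum_i (\log(p_i)/\alpha_i)^2$ to $n$, and $\prod_i \alpha_i$ to $\prod_i \log(p_i)$.

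There is no real obstacle here; the only subtlety is recognising the Gram matrix as a diagonal matrix perturbed by a rank-one update, after which the matrix determinant lemma does all the work. One could alternatively give a geometric argument by noting that $\lat_{\alpha, \alpha_i, P}$ sits inside $\R^{n+1}$ as the image of $\Z^n$ under $\Bm_{\alpha, \alpha_i, P}$, and compute the covolume by projecting onto the last $n$ coordinates (giving the factor $\prod_i \alpha_i$) and correcting by the secant of the tilt induced by the first row (giving the square-root factor); but the algebraic route is cleaner.
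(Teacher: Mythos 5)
Your proof is correct and follows essentially the same route as the paper: both reduce the computation to the determinant of a diagonal-plus-rank-one Gram matrix, the paper by first rescaling the columns by $\alpha_i$ and reading off the eigenvalues of $I_n + \cv\cv^\top$, you by applying the matrix determinant lemma directly to $\Dm^2 + \alpha^2\wv\wv^\top$. These are the same computation in slightly different packaging, so there is nothing further to add.
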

\begin{proof}
Let $\Cm$ be the basis where each $i$-th column of $\Bm_{\alpha, \alpha_i, P}$ is divided by $\alpha_i$. Then $\vol(\lat_{\alpha, \alpha_i, P}) = \vol(\lat(\Cm)) \cdot \prod_{i=1}^n \alpha_i$. The first row of $\Cm$ is a vector $\cv^\top = (\log(p_i) \cdot \alpha / \alpha_i )_{i=1}^n$, and the remaining $n$ rows form an identity matrix $I_n$. So we have $\Cm^\top \Cm = I_n^\top I_n + \cv \cv^\top$ which has eigenvalues $1+\norm{\cv}^2, 1, \ldots, 1$ and thus determinant $1+\norm{\cv}^2$. So $\vol(\lat_{\alpha, \alpha_i, P}) = \sqrt{1+\norm{\cv}^2} \cdot \prod_{i=1}^n \alpha_i$ from which we obtain the result. 
\end{proof}

\subsection{Finding smooth twins from the prime number lattice} \label{sub:idea}

We give some intuition for finding smooth twins from this lattice.
Recall from Section~\ref{sec:latticeprelims} that one can find a short vector $\vv$ in a lattice $\lat$ if the ratio $\norm{\vv}/\gh(\lat)$ is not too large. 

First we consider the norm of the vector $\vv = \Bm \xv \in \lat_{\alpha, \alpha_i, P}$ corresponding to a smooth rational $a/b$. Lemma~\ref{lem:normvec} says that
$$\norm{\vv}^2 = \alpha^2 \cdot \log\left( \frac{a}{b} \right)^2 + \sum_{i=1}^n (x_i \alpha_i)^2.$$
Observe that for a smooth twin $(a,b) = (r,r+1)$ we have that $a/b$ is very close to $1$ and thus $\log(a/b) \approx 0$, while for other smooth rationals the contribution of $|\log(a/b)|$ can be much larger.
More precisely, for large enough $r$ we have the approximation $\alpha^2 \log(a/b)^2 \approx (\alpha/(r+1))^2$.
From this we see that we can actually pick $\alpha$ up to size $O(r)$ to get $\alpha^2 \cdot \log(a/b)^2 = O(1)$. In other words, we can increase $\alpha$ significantly without increasing the length of $\vv$ much.

For the ratio of the vector $\vv$ relative to the Gaussian Heuristic, we also have to consider the sparsity or equivalently the volume of the lattice.
From Lemma~\ref{lem:volume} we see that this is mainly determined by $\alpha$, in particular increasing $\alpha$ makes the lattice sparser.
As we can freely increase $\alpha$ up to $O(r)$ essentially without increasing the norm of $\vv$ one could hope that the lattice becomes sparse enough for $\vv$ to be a short vector in the lattice. 
% However, what matters is the length of $v$ relative to the Gaussian Heuristic of the lattice, which in turns is mainly dependent on the (normalised) volume. Following Lemma~\ref{lem:volume} we see that increasing $\alpha$ makes the lattice sparser, and thus increases the Gaussian Heuristic of the lattice.
% So the norm of the corresponding lattice vector (Lemma~\ref{lem:normvec}) is quite small in comparison to other lattice vectors with $| \log(a/b) | \gg 0$. However this thought forgets the contribution of $\alpha$ in the norm expression. For such a smooth twin we have the following approximation
% $$\alpha^2 \log\left(\frac{r}{r+1}\right)^2 \approx \left( \frac{\alpha}{r+1} \right)^2.$$
% In other words, as long as $\alpha$ is not too big then this contribution in the norm is small. On the other hand if $\alpha$ is too small then the corresponding vector cannot be short  -- one can expect that smaller twins correspond to shorter vectors due to the smaller contribution from the $\alpha_i$'s. So there should be some balancing choice of $\alpha$ that minimises the shortness of the vector. 
To make this reasoning more precise we first give a concrete and \emph{optimal} choice of $\alpha$ in the following proposition.

\begin{proposition} \label{prop:alphaopt} %[{Optimal $\alpha$}]
	Let $a/b$ be a $B$-smooth rational and $P_{a,b} \coloneqq \{ p \in P_B : p \mid ab \}$. Choose a factor base $P = \{ p_i \} \supseteq P_{a,b}$ with $n = \# P$; and weights $\alpha_i $ for $i = 1, \dots , n$. Set $\beta_1 \coloneqq \log^2(a/b)$ and $\beta_2 \coloneqq \sum_{i=1}^n (x_i \alpha_i)^2$ where $x_i = \mathrm{val}_{p_i}(a/b)$. Then the choice $$\alpha = \alpha_{\mathrm{opt}} \approx \sqrt{\frac{\beta_2}{(n-1)\beta_1}}$$ minimises the GH ratio $\norm{\vv}/\gh(\mathcal{L}_{\alpha,\alpha_i,P})$, among all $\alpha \in \R$, where $\vv \in \mathcal{L}_{\alpha,\alpha_i,P}$ is the corresponding vector to the smooth rational $a/b$. %Moreover, for $a/b \approx 1$, $\alpha_{\mathrm{opt}}$ is proportional to $b$. %\erik{So we know that the GH is incorrect for our lattice, but we still want to use it as a measure of the smallest vector in lattice, such as in this Proposition. I'm not sure how we can do that properly. Anyone got some ideas?}\wessel{See extra explanation Section 4.}
%	\wessel{What does being proportional to $b$ even mean when the 'proportionality factor' still also depends on $b$? Are we using this anywhere?}
\end{proposition}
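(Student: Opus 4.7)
The plan is to reduce the claim to a one-variable calculus problem and read off the critical point. By Lemma~\ref{lem:normvec}, $\norm{\vv}^2 = \alpha^2 \beta_1 + \beta_2$. By Lemma~\ref{lem:volume}, $\vol(\lat_{\alpha,\alpha_i,P}) = \sqrt{\alpha^2\gamma+1}\cdot\delta$, where $\gamma := \sum_{i=1}^n (\log(p_i)/\alpha_i)^2$ and $\delta := \prod_{i=1}^n \alpha_i$ are independent of $\alpha$. The Gaussian Heuristic then gives that $\gh(\lat_{\alpha,\alpha_i,P})^2$ equals an $\alpha$-independent constant times $(\alpha^2\gamma+1)^{1/n}$, so
\[
\left(\frac{\norm{\vv}}{\gh(\lat_{\alpha,\alpha_i,P})}\right)^2 \;=\; C \cdot h(\alpha^2), \qquad h(t) := \frac{t\beta_1 + \beta_2}{(t\gamma + 1)^{1/n}},
\]
for some $C>0$ that does not depend on $\alpha$. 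It therefore suffices to minimise $h(t)$ over $t > 0$.

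Next, I would differentiate $h$ logarithmically and set $(\log h)'(t) = 0$ to obtain
\[
\frac{\beta_1}{t\beta_1+\beta_2} \;=\; \frac{\gamma}{n(t\gamma+1)}.
\]
Cross-multiplying and solving the resulting linear equation in $t$ gives the unique positive critical point
\[
t_{\mathrm{opt}} \;=\; \frac{\gamma\beta_2 - n\beta_1}{(n-1)\gamma\beta_1} \;=\; \frac{\beta_2}{(n-1)\beta_1} \;-\; \frac{n}{(n-1)\gamma}.
\]
Since $h(t)\to \beta_2$ as $t\to 0^+$ and $h(t)\sim \beta_1\gamma^{-1/n}\,t^{\,1-1/n}\to\infty$ as $t\to\infty$, while $h$ admits a unique interior critical point, that critical point is necessarily the global minimum, and we set $\alpha_{\mathrm{opt}}^2 = t_{\mathrm{opt}}$.

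The stated approximation $\alpha_{\mathrm{opt}} \approx \sqrt{\beta_2/((n-1)\beta_1)}$ follows by dropping the second summand $n/((n-1)\gamma)$ in $t_{\mathrm{opt}}$. This is legitimate exactly when $\gamma\beta_2 \gg n\beta_1$, which is the regime of interest here: for a smooth twin $a/b = r/(r+1)$ the quantity $\beta_1 = \log^2(a/b)$ is vanishingly small as $r$ grows, while $\beta_2 = \sum_i (x_i\alpha_i)^2$ and $\gamma$ are of substantial size under any sensible weight choice (e.g., $\alpha_i = \log(p_i)$ yields $\gamma = n$). I do not foresee any real obstacle; the main subtlety is the bookkeeping — carefully absorbing all $\alpha$-independent factors when reducing $\norm{\vv}/\gh(\lat_{\alpha,\alpha_i,P})$ to the scalar function $h$, and confirming that the dropped correction is indeed negligible in the regime of interest.
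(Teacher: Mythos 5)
Your proposal is correct and follows essentially the same route as the paper: both reduce $\norm{\vv}/\gh(\lat_{\alpha,\alpha_i,P})$ to a one-variable function of $\alpha$ via Lemma~\ref{lem:normvec}, Lemma~\ref{lem:volume} and the Gaussian Heuristic, and minimise by elementary calculus. The only (harmless) difference is bookkeeping: the paper drops the $+1$ in the volume before differentiating, whereas you keep it, obtain the exact critical point, and discard the $n/((n-1)\gamma)$ correction afterwards — which makes the validity of the approximation slightly more explicit.
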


\begin{proof}
	Consider the following function $f(\alpha) = \norm{\vv}^2/\gh(\mathcal{L}_{\alpha,\alpha_i,P_B})^2$ as a one variable function in terms of $\alpha$. With Lemma~\ref{lem:normvec} and Lemma~\ref{lem:volume}, we approximate\footnote{Two approximations are made here: (1) is to remove the $1$ from Lemma~\ref{lem:volume} which contributes negligibly to the volume; and (2) is the Gaussian heuristic approximation from Heuristic~\ref{heuristic:gh}.} $f(\alpha)$ as
	\begin{align*}
		f(\alpha) \approx \frac{\alpha^{2-2/n}\beta_1}{\gamma^2} + \frac{\beta_2}{\gamma^2\alpha^{2/n}}, \quad \mbox{where } \gamma = \sqrt{\frac{n}{2\pi e}} \left[ \sqrt{\sum_{i = 1}^n \left( \frac{\log(p_i)}{\alpha_i} \right)^2} \prod_{i=1}^n \alpha_i \right]^{\frac{1}{n}} .
	\end{align*}
%	where 
%	\begin{align*}
%		\gamma & = \sqrt{\frac{n}{2\pi e}} \left[ \sqrt{\sum_{i = 1}^n \left( \frac{\log(p_i)}{\alpha_i} \right)^2} \prod_{i=1}^n \alpha_i \right]^{\frac{1}{n}} .
%	\end{align*}
	Minimising this expression in $\alpha$ is a straightforward calculus exercise. The result of this exercise gives the desired expression for $\alpha_{\mathrm{opt}}$. %Moreover, when $a/b \approx 1$, we can approximate $\beta_1 \approx (b/|a-b|)^2$ so we have
%	\begin{align*}
%		\alpha_{\mathrm{opt}} \approx \sqrt{\frac{\beta_2}{n-1}} \cdot \frac{b}{|a-b|} .
%	\end{align*}
\end{proof}
%With this optimal choice of $\alpha$ we can simplify the ratio between the lattice vector and the Gaussian Heuristic.
\begin{corollary} \label{cor:optratio}
	With $P,n,\beta_1,\beta_2$ and $\gamma$ defined in the Proposition~\ref{prop:alphaopt} and the optimal choice $\alpha=\alpha_{\mathrm{opt}}$, we have
	\begin{align*}
		\frac{\norm{\vv}}{\gh(\mathcal{L}_{\alpha,\alpha_i,P})} \approx \left(\sqrt{\frac{\beta_2}{n-1}} \right)^{1-1/n} \cdot \frac{\sqrt{n\beta_1^{1/n}}}{\gamma}. %\erik{\text{should be $\approx$, right?}}
	\end{align*}
\end{corollary}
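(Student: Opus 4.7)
The plan is simple substitution. From the proof of Proposition~\ref{prop:alphaopt} we already have the approximation
\begin{align*}
    f(\alpha) = \frac{\norm{\vv}^2}{\gh(\mathcal{L}_{\alpha,\alpha_i,P})^2} \approx \frac{\alpha^{2-2/n}\beta_1}{\gamma^2} + \frac{\beta_2}{\gamma^2\alpha^{2/n}}.
\end{align*}
Since $\alpha_{\mathrm{opt}} \approx \sqrt{\beta_2/((n-1)\beta_1)}$ is already established as the minimiser, all that remains is to evaluate $f(\alpha_{\mathrm{opt}})$ and take the square root.

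The key observation that keeps the algebra clean is that at the optimum the two summands of $f$ stand in a fixed ratio. Writing $\alpha^{2-2/n} = \alpha^2 / \alpha^{2/n}$ and substituting $\alpha_{\mathrm{opt}}^2 = \beta_2/((n-1)\beta_1)$, the first term collapses to $\beta_2/((n-1)\gamma^2 \alpha_{\mathrm{opt}}^{2/n})$, which is exactly $1/(n-1)$ times the second. Hence the two terms combine into
\begin{align*}
    f(\alpha_{\mathrm{opt}}) \approx \frac{\beta_2}{\gamma^2 \alpha_{\mathrm{opt}}^{2/n}} \cdot \frac{n}{n-1}.
\end{align*}

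Next I would expand $\alpha_{\mathrm{opt}}^{2/n} = \beta_2^{1/n}\,(n-1)^{-1/n}\,\beta_1^{-1/n}$, distribute the factors, and regroup the $\beta_2$ and $(n-1)$ contributions into a single power $(\beta_2/(n-1))^{1-1/n}$, giving
\begin{align*}
    f(\alpha_{\mathrm{opt}}) \approx \left(\frac{\beta_2}{n-1}\right)^{1-1/n} \cdot \frac{n\,\beta_1^{1/n}}{\gamma^2}.
\end{align*}
Taking the square root (and noting $(\beta_2/(n-1))^{(1-1/n)/2} = (\sqrt{\beta_2/(n-1)})^{1-1/n}$) yields the claimed identity. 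No genuine obstacle is expected: the content of the corollary is just bookkeeping around the first-order optimality condition used in Proposition~\ref{prop:alphaopt}, and the ``$\approx$'' merely inherits the same approximations (dropping the additive $+1$ in Lemma~\ref{lem:volume} and invoking the Gaussian heuristic) that were already used to derive $\alpha_{\mathrm{opt}}$.
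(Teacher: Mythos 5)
Your computation is correct and is exactly the intended derivation: the paper gives no separate proof of the corollary, treating it as the direct substitution of $\alpha_{\mathrm{opt}}$ into the approximation for $f(\alpha)$ from the proof of Proposition~\ref{prop:alphaopt}, which is precisely what you carry out. The simplification via the fixed $1/(n-1)$ ratio between the two terms at the optimum and the final regrouping into $(\beta_2/(n-1))^{1-1/n}$ both check out.
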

% So we have concrete choice of $\alpha$ that one should use practically. 
In Section~\ref{sec:analysis} we analyse this expression for $f(\alpha_{\mathrm{opt}})$ and show that, for at least the optimal smooth twins, the corresponding lattice vector is either the shortest vector or one of the shortest. A priori computing this $\alpha_{\mathrm{opt}}$ requires knowledge of the smooth rational itself since the terms use it and their factorisations. However by estimating $\beta_1$ and $\beta_2$ one can choose $\alpha$ approximately. It might not give the absolute shortest vector among all $\alpha$'s but it will still correspond to a short vector. One can also pick several $\alpha$ over some \iffullversion well-informed \fi range in the hope that one is close to $\alpha_{\mathrm{opt}}$. % but close to $\alpha_{\mathrm{opt}}$

The straightforward strategy to find smooth twins is now as follows: first one picks a smoothness bound $B$ and uses the factor base $P_B$. Then one makes a choice of $\alpha$, which is hopefully close to $\alpha_{opt}$ (from Proposition~\ref{prop:alphaopt}), before constructing the full lattice $\lat = \lat_{\alpha, \alpha_i, P_B}$. Then one computes short vectors in $\lat$ using the techniques from~\S\ref{subsec:svp} (either lattice sieving, BKZ reduction and/or dimensions for free depending on the GH ratio). Finally, for each short vector $\vv = \Bm_{\alpha, \alpha_i, P} \xv$, one computes $a = \prod_{i : x_i > 0} p_i^{x_i}$ and $b = \prod_{i : x_i < 0} p_i^{-x_i}$ and checks if $|a-b| = 1$. %We shall commonly refer to this last step as the \emph{post-processing} step.

%As discussed in Section~\ref{sec:latticeprelims}, the GH ratio $\norm{\vv}/\gh(\lat)$ determines the accessibility of finding short vectors. When $\norm{\vv}/\gh(\lat) \in [1, \sqrt{4/3}]$ we can use lattice sieving and when $\norm{\vv}/\gh(\lat) < 1$ one can either use the BKZ lattice-reduction algorithm or dimensions for free.

%Note that the best approach for finding the appropriate short vector $\vv \in \lat$ corresponding to a smooth twin depends on the ratio $\norm{\vv}/\gh(\lat)$ as discussed in Section~\ref{sec:latticeprelims}. In short, if $\norm{\vv}/\gh(\lat) \in [1, \sqrt{4/3}]$ one uses a lattice sieving algorithm to enumerate all lattice vectors with a length in that range, if $\norm{\vv} = \lambda_1(\lat)$ and $\norm{\vv}/\gh(\lat) < 1$ one can use the BKZ lattice-reduction algorithm and dimensions for free to find that one unusually short vector at a lower cost. %\erik{Since the GH is incorrect for our lattice (the norms of the shortest vectors are much smaller than the GH when $B$ gets really large), I think there would also be vectors of length less than the GH that don't give rise to smooth twins. Nevertheless, the smooth twin vectors are generally quite short, so a lattice siever should find them.} \wessel{There could indeed we things in between, for example $2*\vv$ could even fall there. But that is fine, the algorithm will actually find the absolute shortest ones first (they are easier to find).}

\begin{remark}
	We note a similarity between the CHM algorithm (see\iffullversion ~\S\ref{subsec:consmethod}\else ~Section~\ref{sec:prior}\fi) and our sieving-based SVP solver. Modulo the multiplicative versus additive subtlety (since we work with logarithms), the procedure in the CHM algorithm and lattice sieving correspond to the same process --
	%The CHM algorithm multiplicatively combines two smooth twins together to form a new smooth twin while the SVP solver takes the difference of two vectors to make a shorter vector. 
	%Heuristically, the SVP solver should be able to find more smooth twins than the CHM algorithm, since in our lattice we also generate pairs $(r,r+a)$ along the way where $a$ is small, but possibly $a \neq 1$, whereas in the CHM algorithm, only pairs with $a=1$ are used to generate the other smooth twins.
	yet the two algorithms take different paths to find large smooth twins. The CHM method starts with small pairs that are already very close and constructs pairs of increasing size. Whereas the lattice siever starts with large pairs that are far apart and reduces their difference.
\end{remark}

% \begin{itemize}
% 	\item Pick some smoothness bound $B$ and work with $P_B$;
% 	\item Make an approximate choice $\alpha = 2^{\kappa}$ and work with the lattice $\lat = \lat_{\alpha, \alpha_i, P_B}$;
% 	\item Find short vectors in this lattice using SVP algorithms -- moreover one finds a collection of short vectors of norm at most $\sqrt{4/3} \cdot \gh(\lat)$;
% 	\item For each short vector $\vv = \Bm_{\alpha, \alpha_i, P} \xv$ compute the integers $a = \prod_{i : x_i > 0} p_i^{x_i}$ and $b = \prod_{i : x_i < 0} p_i^{-x_i}$;
% 	\item Output the pairs $(a,b)$ when $|a-b| = 1$.
% \end{itemize}

% \wessel{discussion on using SVP (best) or BKZ (okay) or LLL (not good) to find them}

While the optimal twins will typically correspond to the shortest vectors\iffullversion , i.e., $\norm{\vv}/\gh(\lat) = \lambda_1(\lat)/\gh(\lat)$ with a ratio less than or close to $1$, \fi ~the same cannot be said about the smaller twins simply because there are more of them. So one will not be able to find these twins by finding the shortest vector in the full prime number lattice. Furthermore, in high dimension when $\norm{\vv}/\gh(\lat) > 1$, and especially above $\sqrt{4/3}$, recovering $\vv$ can quickly become infeasible. Fortunately there are a few some solutions to this problem which we highlight here. % and discuss further in Section~\ref{sec:smoothresults}.

\subsubsection*{Guessing.} Instead of working with the full lattice to find $B$-smooth twins, one can guess a set $Q$ of prime factors that do not appear in the smooth twin and work with the factor base $P = P_B \setminus Q$. 
The idea is that a $B$-smooth twin typically only uses a small fraction of primes from $P_B$ in its factorisation.
More precisely, with this factor base $P$ we work in a sublattice of the full lattice by removing all vectors that correspond to smooth rationals containing a factor in $Q$. If the guess is correct this decreases both the lattice dimension as well as the ratio $\norm{\vv}/\gh(\lat)$.  

\begin{remark}
	By splitting the factor base into a fixed and a guessing part one could interpret each guess as a Bounded Distance Decoding problem (BDD) in some fixed lattice. Preprocessing~\cite{ducas2020randomized} could allow a low amortised cost for many BDD instances~\cite{karenin2025fast}, which we leave as future work.
	% \wessel{BDD idea. Frame it as a ``future work" sort of thing and keep it short.}
\end{remark}

\subsubsection*{Dimensions for free.} We can alternatively reduce the lattice sieving dimension with the dimensions for free technique as explained in Section~\ref{sec:latticeprelims}.
Even if the vector is not unusually short, one could still hope to find it with this technique at a lower probability. 
We will refer to the number of dimensions we decrease the lattice by as the amount of \emph{lifting} we perform.
Just as for the guessing strategy varying the amount of lifting gives a trade-off between success probability and cost. %In Section~\ref{sec:smoothresults} we will compare the two techniques. 
% \bruno{(for Wessel) the term ``lifting" is used later to refer to d4f. this should be defined here or in 4.1}
% Later on we will use this to find smooth twins whose size is relatively close to that of cryptographic relevance. \wessel{add or remove whatever you see fit}
%\wessel{Why do we call this `continuous' vs non-continuous weights btw? As in it is a bunch of discrete values^^ }

%This decrease in the weight of $\alpha_1$ means that smooth rationals with a larger exponent $|x_1|$ might correspond to a shorter vector than before (due to the smaller contribution of $\alpha_1 x_1$). 

%For example, given the hard example $(2^{36}-1,2^{36})$ from before, if one sets $\alpha_1 = \log(p_1) / 10$ for $p_1=2$, the vector corresponding to $(2^{36}-1,2^{36})$ is suddenly an unusually short vector in the lattice $\norm{\vv}/\gh(\mathcal{L}_{\alpha_{\mathrm{opt}},\alpha_i,P_{109}})$ achieving a ratio of $0.876$ with the Gaussian Heuristic.

 % (TBF) \wessel{Take the $(2^{36}-1,2^{36})$ example and work out how much sufficient guessing is needed to be found from SVP (important) or BKZ or LLL}

%!TEX root = ./paper.tex
\section{Analysis}\label{sec:analysis}

We analyse the resulting expression from Corollary~\ref{cor:optratio} to determine the shortness of vectors in the full prime number lattice corresponding to optimal smooth twins. In other words we make a blanket estimate of $r = e^{2e\sqrt{B}}$ for optimal twins (coming from Theorem~\ref{thm:opt}) and analyse $\norm{\vv}/\gh(\lat_{\alpha_{\mathrm{opt}},\alpha_i,P_B})$. We give both theoretical and experimental analyses for various continuous choices of $\alpha_i$ (in terms of $\log(p_i)$). %In all of these analyses we show that this ratio is $O(1)$. Ideally we would like to conclude that this ratio is approximately $1$. However, at least for the theoretical analysis, this will prove challenging.

Recall the expressions for $\beta_1$, $\beta_2$ and $\gamma$ which are used in the GH ratio as % $\norm{\vv}/\gh(\lat_{\alpha_{\mathrm{opt}},\alpha_i,P_B})$ as
\begin{align} \label{eqn:betagamma}
	\beta_1 = \log^2(a/b), \ \beta_2 = \sum_{i=1}^n (x_i \alpha_i)^2 \mbox{ and } \gamma & = \sqrt{\frac{n}{2\pi e}} \Biggl[ \sqrt{\sum_{i = 1}^n \left(\frac{\log(p_i)}{\alpha_i} \right)^2} \prod_{i=1}^n \alpha_i \Biggr]^{\frac{1}{n}},
\end{align}
where with our choice of factor base $P = P_B$ we have $n = \pi(B)$. The expression $\beta_1$ (and more precisely $\beta_1^{1/n}$) is straightforward to analyse. %An estimate for $\beta_1^{1/n}$ was given during the proof of Proposition~\ref{prop:alphaopt} and we make this more precise for optimal twins. 

\begin{lemma} \label{lem:analone}
	 Assuming the optimal twin estimate for smooth twins, $r = e^{2e\sqrt{B}}$, we have $\beta_1^{1/n} \to 1$ as $n \to \infty$.% we have $$\beta_1^{1/n} \to 1.$$ %\sim e^{-4e\sqrt{\frac{\log(n)}{n}}} 
\end{lemma}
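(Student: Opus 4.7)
The plan is to reduce $\beta_1^{1/n} \to 1$ to showing $\log(\beta_1)/n \to 0$, and then evaluate both quantities asymptotically. There is no real obstacle here; the computation is routine once we combine the optimal-twin asymptotic with the prime number theorem.

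First I would expand $\log(a/b)$ for the smooth twin $(a,b) = (r,r+1)$. Writing
\begin{equation*}
\log\!\left(\frac{r}{r+1}\right) = \log\!\left(1 - \frac{1}{r+1}\right) = -\frac{1}{r+1} + O\!\left(\frac{1}{r^2}\right),
\end{equation*}
we obtain $\beta_1 = \log^2(a/b) \sim 1/r^2$, and hence $\log \beta_1 = -2\log(r) + O(1/r)$.

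Next I would substitute the optimal twin estimate $r = e^{2e\sqrt{B}}$ from Theorem~\ref{thm:opt}, which gives $\log(r) \sim 2e\sqrt{B}$ and therefore
\begin{equation*}
\log \beta_1 \sim -4e\sqrt{B}.
\end{equation*}
At the same time, by the prime number theorem, $n = \pi(B) \sim B/\log(B)$, so
\begin{equation*}
\frac{\log \beta_1}{n} \sim -4e\sqrt{B} \cdot \frac{\log(B)}{B} = -\frac{4e \log(B)}{\sqrt{B}} \longrightarrow 0
\end{equation*}
as $B \to \infty$ (equivalently, as $n \to \infty$).

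Finally, exponentiating gives $\beta_1^{1/n} = \exp(\log(\beta_1)/n) \to e^0 = 1$, as required. The only thing to be mildly careful about is the direction of the limit: since $n = \pi(B)$ is a strictly increasing function of $B$ for large enough $B$, sending $n \to \infty$ is equivalent to sending $B \to \infty$, so the asymptotic from Theorem~\ref{thm:opt} can be applied directly.
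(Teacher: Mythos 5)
Your proof is correct and follows essentially the same route as the paper: both arguments reduce to the estimate $\beta_1 \sim e^{-4e\sqrt{B}}$ combined with the prime number theorem relating $B$ and $n = \pi(B)$, so that the $n$-th root of the main term tends to $1$. The only cosmetic difference is that you pass to logarithms and show $\log(\beta_1)/n \to 0$, whereas the paper expands $\beta_1^{1/n}$ directly using $(1+x)^{1/k} = 1 + x/k + O(x^2/k)$ to control the error term; both are sound.
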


\iffullversion
\begin{proof}
	Start with this asymptotically more precise statement $\beta_1 = 1/r^2 + O(1/r^3)$.
	Using the Taylor series for $\log{(1+x)}$ for small $x$, we get
	\begin{equation*}
		\log{\beta_1} = \log(r^{-2} (1 + O(1/r)) = -2\log(r) + O(1/r) = -4e \sqrt{B} + O(1/r).
	\end{equation*}
	By the prime number theorem, we find that $B = n \log(n) + O(n\log(\log(n)))$.
	%We estimate $B$ in terms of $n = \pi(B)$ by the prime number theorem, i.e. $B = n \log(n) + O(n\log(\log(n)))$.
	Hence, $\log{(\beta_1^{1/n})} \to 0$ as $n \to \infty$. Thus $\beta_1^{1/n} \to 1$ as $n \to \infty$.
\end{proof}
\else
\begin{proof}
	Start with this asymptotically more precise statement $\beta_1 = 1/r^2 [ 1 + O(1/r) ]$.
	Using the Taylor series for $\log{(1+x)}$ for small $x$ and the prime number theorem, $B = n \log(n) + O(n\log(\log(n)))$, we get $\log{(\beta_1^{1/n})} \to 0$ as $n \to \infty$. %Thus $\beta_1^{1/n} \to 1$ as $n \to \infty$.
\end{proof}
\fi

\subsection{Theoretical analysis with $\alpha_i = \log(p_i)$}

Now we deal with analysing $\beta_2$ and $\gamma$. This is non-trivial to do generically, i.e. without an explicit choice of $\alpha_i$. The most challenging aspect in doing this generically is analysing $\beta_2$ in its entirety as well as the sum $\sum_{i=1}^n \left( (\log(p_i)/\alpha_i)^2 \right)$ in $\gamma$. For certain choices of $\alpha_i$ these quantities can be analysed. In particular we make a choice of $\alpha_i = \log(p_i)$ for this analysis.

\begin{lemma} \label{lem:analtwov2}
	Taking $\alpha_i = \log(p_i)$, as $n \to \infty$ we have 
$$\gamma \sim \sqrt{\frac{n}{2\pi e}} \Bigl( \log(n) + \log(\log(n)) - 1 \Bigr).$$
% \erik{we can remove the -1 here, right? But then also the $\log(\log(n))$ term I guess. Then also in the subsequent usages?}
\end{lemma}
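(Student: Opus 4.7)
The plan is to substitute $\alpha_i = \log(p_i)$ directly into the definition of $\gamma$ and reduce the lemma to a single asymptotic about a product of logarithms of primes. With this choice, $\sum_{i=1}^n (\log(p_i)/\alpha_i)^2 = \sum_{i=1}^n 1 = n$, so
\[
\gamma \;=\; \sqrt{\frac{n}{2\pi e}} \cdot n^{1/(2n)} \cdot \Bigl(\prod_{p \leq B} \log p\Bigr)^{1/n}.
\]
The factor $n^{1/(2n)}$ tends to $1$, and $\log n + \log\log n - 1 \sim \log n$, so the lemma reduces to showing $\bigl(\prod_{p \leq B} \log p\bigr)^{1/n} \sim \log n$.

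Taking logarithms, this in turn is equivalent to $\frac{1}{n} \sum_{p \leq B} \log\log p - \log\log n \to 0$; the \emph{additive} $o(1)$ is what is needed, since only a multiplicative $(1+o(1))$ inside the exponential would yield the weaker $(\log n)^{1+o(1)}$. To estimate the sum I would apply partial summation against the prime counting function $\pi$: writing it as a Stieltjes integral and integrating by parts gives
\[
\sum_{p \leq B} \log\log p \;=\; \pi(B)\,\log\log B \;-\; \int_2^B \frac{\pi(t)}{t \log t}\, dt.
\]
The prime number theorem $\pi(t) \sim t/\log t$ makes the integrand $\sim 1/\log^2 t$, so the integral is $O(B/\log^2 B)$. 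Dividing by $n = \pi(B) \sim B/\log B$ turns this error term into $O(1/\log B) = o(1)$, leaving $\frac{1}{n}\sum_{p \leq B} \log\log p = \log\log B + o(1)$.

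Finally I would compare $\log\log B$ with $\log\log n$ using the inverse prime number theorem $B \sim n \log n$, which gives $\log B = \log n + \log\log n + o(1)$ and hence $\log\log B = \log\log n + O(\log\log n/\log n) = \log\log n + o(1)$. Combining with the previous step yields $\frac{1}{n}\sum_{p \leq B}\log\log p - \log\log n \to 0$, which completes the reduction. There is no serious obstacle; the argument is essentially careful bookkeeping of PNT error terms. The one delicate point is insisting on additive (rather than multiplicative) $o(1)$ estimates throughout, so that exponentiation at the end produces a genuine $\sim$ instead of the weaker $(\log n)^{1+o(1)}$ bound.
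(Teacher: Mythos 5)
Your proposal is correct and follows essentially the same route as the paper: the paper reduces the lemma to its Appendix Lemma~\ref{lem:logprod} on the geometric mean of $\log p$ over $p \leq B$, whose proof is exactly your Abel-summation-plus-PNT computation, followed by the substitution $B \sim n\log(n)$. The only (immaterial) difference is that the paper keeps the sharper additive form $\bigl(\prod_{p\leq B}\log p\bigr)^{1/n} = \log(B) - 1 - o(1)$, whereas you discard the $-1$ since it is absorbed by the $\sim$; your care in distinguishing additive from multiplicative $o(1)$ in $\tfrac{1}{n}\sum\log\log p$ is precisely the right point to insist on.
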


\begin{proof}
	With our choice of $\alpha_i$ we can rewrite $\gamma$ from~\eqref{eqn:betagamma} as
	\begin{align*}
		\gamma = \sqrt{\frac{n}{2\pi e}} \left( \sqrt{n} \prod_{j=1}^n \log(p_j) \right)^{1/n}.
	\end{align*}
	By Lemma~\ref{lem:logprod} we have $\left(\prod_{i=1}^n \log(p_i) \right)^{1/n} \sim \log(B) - 1$. Since $n^{1/2n} \to 1$ and (once again) $B \sim n \log(n)$ we obtain the desired result.
\end{proof}

\begin{lemma} \label{lem:analthreev2}
	Taking $\alpha_i = \log(p_i)$ and the optimal twin estimate, we have $$\frac{16e^2B}{n} \leq \beta_2 \leq 4\left( 2e\sqrt{B} + \frac{1}{r} \right)^2.$$
\end{lemma}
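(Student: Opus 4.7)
The plan is to directly unpack the definition of $\beta_2$ for the smooth rational $a/b = r/(r+1)$, exploit the coprimality of $r$ and $r+1$ to express the $1$-norm of $(x_i \log p_i)_i$ cleanly, and then apply the standard $\ell^1$–$\ell^2$ comparison already used in Lemma~\ref{lem:normvec}.

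First I would note that since $\gcd(r,r+1)=1$, for each prime $p_i \in P_B$ exactly one of $\val_{p_i}(r)$ and $\val_{p_i}(r+1)$ is nonzero, and $x_i = \val_{p_i}(r/(r+1))$ satisfies $|x_i| = \val_{p_i}(r) + \val_{p_i}(r+1)$. Summing over $i$ gives
\begin{equation*}
\sum_{i=1}^n |x_i|\log(p_i) = \sum_{p \in P_B}\bigl(\val_p(r)+\val_p(r+1)\bigr)\log(p) = \log(r) + \log(r+1),
\end{equation*}
using the $B$-smoothness of both $r$ and $r+1$.

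Next I would apply the norm-inequalities from the proof of Lemma~\ref{lem:normvec} (i.e.\ $\tfrac{1}{n}\|\yv\|_1^2 \leq \|\yv\|_2^2 \leq \|\yv\|_1^2$) to the vector $\yv = (x_i \log p_i)_{i=1}^n$, yielding
\begin{equation*}
\frac{\bigl(\log(r)+\log(r+1)\bigr)^2}{n} \leq \beta_2 \leq \bigl(\log(r)+\log(r+1)\bigr)^2.
\end{equation*}
Finally I would substitute the optimal twin estimate $\log(r) = 2e\sqrt{B}$. Writing $\log(r)+\log(r+1) = 2\log(r) + \log(1+1/r)$ and using the elementary bounds $0 \leq \log(1+1/r) \leq 1/r$, the lower bound becomes
\begin{equation*}
\beta_2 \geq \frac{(2\log(r))^2}{n} = \frac{16 e^2 B}{n},
\end{equation*}
and the upper bound becomes
\begin{equation*}
\beta_2 \leq \bigl(2\log(r) + 1/r\bigr)^2 = \bigl(4e\sqrt{B} + 1/r\bigr)^2 \leq \bigl(4e\sqrt{B} + 2/r\bigr)^2 = 4\bigl(2e\sqrt{B} + 1/r\bigr)^2,
\end{equation*}
which is the claimed inequality.

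The calculation is essentially routine; the only subtlety is the observation that the coprimality of $r$ and $r+1$ collapses the sum $\sum_i |x_i|\log(p_i)$ to $\log(r(r+1))$, so that the $\ell^1$–$\ell^2$ comparison becomes sharp enough. No serious obstacle is anticipated.
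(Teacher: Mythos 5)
Your proposal is correct and follows essentially the same route as the paper: apply the $\ell^1$--$\ell^2$ comparison from Lemma~\ref{lem:normvec} to $\sum_i |x_i|\log(p_i) = \log(r)+\log(r+1)$ and then substitute the estimate $\log(r)=2e\sqrt{B}$. Your explicit use of coprimality to justify the collapse of the sum, and the slightly sharper intermediate bound $\log(1+1/r)\leq 1/r$, are details the paper leaves implicit but do not change the argument.
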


\begin{proof}
	Recall that $\beta_2 = \sum_{i=1}^n x_i^2  \log^2(p_i)$ and, by Lemma~\ref{lem:normvec}, we have the inequality
	\begin{align} \label{eqn:beta2logpi}
		\frac{(\sum_{i=1}^n | x_i \log(p_i) |)^2}{n} \leq \beta_2 \leq \Bigl(\sum_{i=1}^n | x_i \log(p_i) |\Bigr)^2.
	\end{align}
	One can calculate that $\sum_{i=1}^n | x_i \log(p_i) | = \log(r)+\log(r+1)$. Using the optimal twin estimate, this is bounded below by $4e \sqrt{B}$ and bounded above by $4e \sqrt{B} + 2/r$. Substituting these bounds in~\eqref{eqn:beta2logpi} gives the desired lower and upper bounds.
\end{proof}

%We are now able to bring everything together. 

\begin{proposition} \label{prop:shortopt}
	Let $\vv \in \lat = \lat_{\alpha,\alpha_i,P_B}$ be the lattice vector corresponding to an optimal $B$-smooth twin $(r,r+1)$ with the optimal choice $\alpha = \alpha_{\mathrm{opt}}$ and weights $\alpha_i = \log(p_i)$. Then we have
	\begin{align*}
		\frac{\norm{\vv}}{\gh(\lat_{\alpha,\alpha_i,P_B})} = O\left( \frac{1}{\sqrt{\log(n)}} \right) .
	\end{align*}
    % \erik{do we need an $\approx$ here as well then? See Cor 5.6}
%	In particular this vector is amongst the shortest vectors in this lattice.
\end{proposition}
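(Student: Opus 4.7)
The plan is to simply combine Corollary~\ref{cor:optratio} with the three preparatory lemmas (Lemmas~\ref{lem:analone}, \ref{lem:analtwov2}, \ref{lem:analthreev2}) and the prime number theorem.

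First, I would write the ratio in the form given by Corollary~\ref{cor:optratio},
\begin{align*}
    \frac{\norm{\vv}}{\gh(\lat_{\alpha,\alpha_i,P_B})} \approx \left(\sqrt{\frac{\beta_2}{n-1}}\right)^{1-1/n} \cdot \frac{\sqrt{n\,\beta_1^{1/n}}}{\gamma},
\end{align*}
and handle the three factors separately. The factor $\sqrt{n\,\beta_1^{1/n}}$ is asymptotic to $\sqrt{n}$ by Lemma~\ref{lem:analone}. The denominator $\gamma$ is asymptotic to $\sqrt{n/(2\pi e)}\bigl(\log n + \log\log n - 1\bigr)$ by Lemma~\ref{lem:analtwov2}. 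Dividing, the $\sqrt{n}$ cancels and this subratio becomes $\Theta(1/\log n)$.

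Next, I would use the upper bound in Lemma~\ref{lem:analthreev2}, $\beta_2 \leq 4(2e\sqrt{B}+1/r)^2 = O(B)$, together with the prime number theorem $B \sim n\log n$, to get $\beta_2/(n-1) = O(\log n)$ and hence $\sqrt{\beta_2/(n-1)} = O(\sqrt{\log n})$. Since the exponent $1-1/n$ lies in $[1/2,1]$ and the base is bounded away from $0$ (the lower bound $\beta_2 \geq 16 e^2 B/n = \Theta(\log n)$ from the same lemma ensures the base grows), we obtain
\begin{align*}
    \left(\sqrt{\frac{\beta_2}{n-1}}\right)^{1-1/n} = O\bigl(\sqrt{\log n}\bigr).
\end{align*}

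Multiplying the two estimates gives
\begin{align*}
    \frac{\norm{\vv}}{\gh(\lat)} = O\left( \sqrt{\log n} \cdot \frac{1}{\log n} \right) = O\!\left( \frac{1}{\sqrt{\log n}} \right),
\end{align*}
which is the claimed bound. The only genuinely delicate point is checking that the approximations in Corollary~\ref{cor:optratio} and the $\approx$-signs propagate cleanly to a big-$O$ statement; but since each of Lemmas~\ref{lem:analone}--\ref{lem:analthreev2} already provides the error control as $n\to\infty$ (the $\beta_1^{1/n}\to 1$ limit, the $\sim$ in the $\gamma$-asymptotic, and two-sided bounds on $\beta_2$), there is no serious obstacle: the argument is essentially bookkeeping of asymptotics, with the prime number theorem bridging $B$ and $n$.
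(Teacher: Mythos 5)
Your argument is correct and follows essentially the same route as the paper's proof: combine Corollary~\ref{cor:optratio} with Lemma~\ref{lem:analone} and Lemma~\ref{lem:analtwov2} to get the subratio $\sqrt{n\beta_1^{1/n}}/\gamma = \Theta(1/\log n)$, then use the upper bound in Lemma~\ref{lem:analthreev2} together with the prime number theorem to bound $(\sqrt{\beta_2/(n-1)})^{1-1/n}$ by $O(\sqrt{\log n})$. Your extra remark that the base is bounded away from $0$ so the exponent $1-1/n$ causes no trouble is a harmless refinement of detail the paper leaves implicit.
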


\begin{proof}
%	Recall from Corollary~\ref{cor:optratio} that
%	\begin{align*}
%		\frac{\norm{\vv}}{\gh(\mathcal{L}_{\alpha,\alpha_i,P_B})} = \left(\sqrt{\frac{\beta_2}{n-1}} \right)^{1-1/n} \cdot \frac{\sqrt{n\beta_1^{1/n}}}{\gamma}.
%	\end{align*}
	Combining Lemma~\ref{lem:analone} and Lemma~\ref{lem:analtwov2} we get 
	\begin{align*}
		\frac{\sqrt{n\beta_1^{1/n}}}{\gamma} \sim \frac{\sqrt{2\pi e}}{\log(n)+\log(\log(n))-1},
	\end{align*}
	and Lemma~\ref{lem:analthreev2} gives an upper bound for $(\sqrt{\beta_2/(n-1)})^{1-1/n}$ which again by the prime number theorem can be solely expressed in terms of $n$. In particular we have
	\begin{align*}
		\left( \sqrt{\frac{\beta_2}{n-1}} \right)^{1-1/n} & = O \left( \sqrt{\log(n)} \right) .
	\end{align*}
	Combining these together with Corollary~\ref{cor:optratio} gives the intended result. Moreover with the lower bound in Lemma~\ref{lem:analthreev2} we also have $\norm{\vv}/\gh(\lat) = \Omega ( 1/(n\log(n))^{1/2} )$.
\end{proof}

%This analysis suggests that the ratio $\norm{\vv}/\gh(\lat)$ corresponding to optimal smooth twins goes to $0$ as one increases $P_B$ and $n = \pi(B)$. However we will see through our upcoming experimental analysis that this decay to $0$ is quite slow for the largest accessible twins at our disposal (and also more general smooth twins).

\subsection{Experimental analysis with different $\alpha_i$} \label{subsec:experimentalanalysis}

%One can ask whether the theoretical discussion from the last section can be done for other weights $\alpha_i$ or even be generalised to get a similar conclusion. The first natural choice of weights based on historical context is $\alpha_i = \log(p_i)$. Mimicking the theoretical bounds and asymptotics for this weight choice, one gets
%\begin{align*}
%	\frac{\norm{v}}{\gh(\lat)} = O\left( \frac{1}{\sqrt{\log(n)}} \right) .
%\end{align*}
The theoretical analysis suggests that, for large enough $B$, the lattice vector corresponding to an optimal $B$-smooth twin is unusually short and one should be able to find it with BKZ. However this abstracts the constant in the big $O$ term from the generic bounds (notably from $\beta_2$) and it is unclear from this analysis whether the ratio is less than or greater than $1$ for small $B$. So we conduct experimental analysis to assess their GH ratios $\norm{\vv}/\gh(\lat)$ for \emph{all} $B$-smooth twins. % not only for large $B$-smooth twins but also smaller smooth twins %\erik{do you mean smaller smooth twins?} 
We will observe that for the largest twins we are in the scenario where either $\norm{\vv}/\gh(\lat) \approx 1$ or $1 < \norm{\vv}/\gh(\lat) < \sqrt{4/3}$ -- so one needs to do lattice sieving to find them. 

%We also experiment with more generic weights $\alpha_i = \log^e(p_i)$ which is non-trivial to analyse theoretically primarily due to the $\beta_2$ analysis. One could plausibly try to minimise the resulting expression in Corollary~\ref{cor:optratio} with this generic $\alpha_i$ (in a similar manner to Proposition~\ref{prop:alphaopt}) but the analysis becomes difficult to get precise.

\iffullversion \else We also experiment with more generic weights $\alpha_i = \log^e(p_i)$ which is non-trivial to analyse theoretically primarily due to the more complex $\beta_2$ analysis. \fi
The influence of $\alpha_i$ is considered further in~\S\ref{sub:influence_of_parameters}.
For now we consider the GH ratio $\norm{\vv}/\gh(\lat)$ for several exponents $e$ in $\alpha_i = \log^e(p_i)$ on the set of strictly $199$-smooth twins. %\erik{what does strictly mean? It's used quite a lot in the text. Perhaps that 199 is one of the factors?}.
We think our set of such \iffullversion smooth \fi twins is complete, see Conjecture \ref{conj: 200 smooth twins}, and it should therefore give a good representation.
% As a result we conduct experimental analysis to assess a wide range of different $\alpha_i$'s. With this experimental analysis we can go further than just assessing the vectors corresponding to optimal twins. \bruno{continue}

\begin{figure}[t!]
	\centering
	\begin{subfigure}[b]{0.45\textwidth}
		\includegraphics[width=\textwidth]{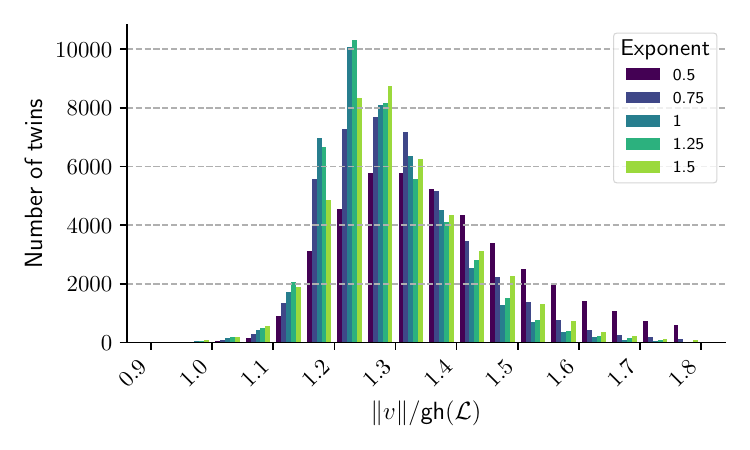}
		\caption{Comparison of the GH ratios achieved for varying exponents $e$ in $\alpha_i = \log^e(p_i)$.}
		\label{subfig:exponent}
	\end{subfigure}
	\begin{subfigure}[b]{0.45\textwidth}
		\includegraphics[width=\textwidth]{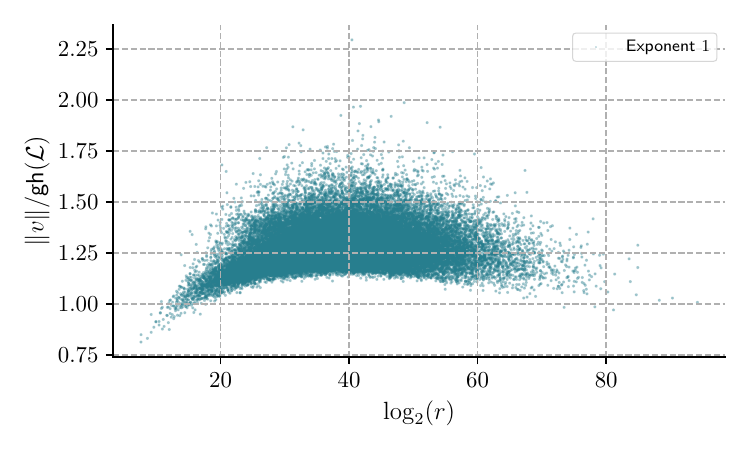}
		\caption{Scatter plot of the GH ratio versus the bit size of the $199$-smooth twins.}
		\label{subfig:scatter}
	\end{subfigure}
	\caption{Plots on the set of \iffullversion all \fi strictly $199$-smooth twins with $\alpha = \alpha_{\mathrm{opt}}$.}
	\label{fig:ratios_199}
\end{figure}

% \begin{figure}
	
% 	\caption{}
% 	\label{fig:scatter_ratios}
% \end{figure}

% \wessel{Experimental discussion on $\alpha_i = \log(p_i)$ and $\alpha_i = \log(p_i)^{3/2}$}

% \bruno{One new thought I had but we might not have time to do this analysis is to parameterise $\alpha_i$ as $\log(p_i)^{x}$ and do a calculus exercise (in a similar fashion to the proposition) to work out the optimial exponent for each smooth twin (a bit like how we found the best $\alpha$.} 
In Figure~\ref{subfig:exponent} we see that the exponents $e=1$ or $e=1.25$ lead to the most twins with a small GH ratio.
Among all strictly $199$-smooth twins the ratio $\norm{\vv}/\gh(\mathcal{L}_{\alpha_{\mathrm{opt}},\log(p_i),P_B})$ is on average $1.28$ and no larger than $2.3$. When minimising the ratio with respect to the exponent $e$ (through numerical computations) the average and largest ratio among these twins is $1.25$ and $2.2$ respectively. 
The twins with a larger ratio typically contain an unusually large prime power due to the larger contribution from $(x_i \alpha_i)^2$. For instance this is the case for the $199$-smooth twin $(r,r+1)$ where $r = 107^6 - 1$ which clearly has a large power of $107$. 
In Figure~\ref{subfig:scatter} we see a dependency between the GH ratio and the size of the twins. Notably the ratio for larger twins becomes better, which is in line with our theoretical analysis.
% \erik{I think we need to mention that we think we have found all 200-smooth twins. Otherwise the reason for this could be that we had more trouble finding the big ones with larger ratio w.r.t. to the GH. See Conjecture 7.1}
% above saying the the ratio for optimal twins is $o(1)$.%\wessel{Remove or update $O(1)$ with analysis change, with $O(1)$ the statement makes no sense}.

\subsubsection*{Non-continuous weights.} Instead of continuous choices of $\alpha_i$ (in terms of $\log(p_i)$) one might want non-continuous weights: e.g. $\alpha_1 = \log(p_1)/\eta$, for some $\eta > 1$, and $\alpha_i = \log(p_i)$ for $i \geq 2$. This might be necessary when one needs to (say) find smooth twins with a large power of a certain prime\iffullversion ~(e.g. see Section~\ref{sec:isogeny})\fi . We discuss this further in~\S\ref{sub:influence_of_parameters}.

\begin{remark}
	The plots in Figure~\ref{fig:ratios_199} use the optimal $\alpha = \alpha_{\mathrm{opt}}$ for each twin which does not correspond to the same $\alpha$ across the board. 
	For a single $\alpha$ the largest ratio will far exceed $2$. Moreover $\alpha$ influences the size of the resulting twin that one gets as a shortest vector.
	%If one fixes a single $\alpha$ then the largest ratio will far exceed $2$. Moreover the choice of $\alpha$ influences the size of the resulting twin that one gets as a shortest vector.% This is implied by Proposition~\ref{prop:shortopt} and is emphasised more explicitly in~\S\ref{sub:cpuresults}.
\end{remark}
% !TeX spellcheck = en_US
%!TEX root = ./paper.tex
\section{Smooth Twin Results}\label{sec:smoothresults}

% \wessel{Discussion on the G6K library and its use here. GPU implementation used for 7.1 and 7.2 and CPU implementation used for 7.3. Also what machine was used for the GPU implementation and its details. }

We report on results \iffullversion finding smooth twins \fi using the prime number lattice. Experiments ran on the PlaFRIM cluster using CPU nodes ($64$ ZEN2 cores, $2.35$GHz, $256$GB RAM) and GPU nodes ($64$ ZEN3 cores, $2.60$GHz, $2$ NVIDIA A100 GPUs, $512$GB RAM).
We used G6K~\cite{albrecht2019general} and its GPU extension~\cite{ducas2021advanced} for lattice sieving~\footnote{Available at \url{https://github.com/fplll/g6k} and \url{https://github.com/WvanWoerden/G6K-GPU-Tensor} respectively}.
These tools, developed for lattice-based cryptanalysis, have achieved SVP records up to dimension $180$~\cite{ducas2021advanced}.

\subsection{Optimal twins from CPU sieving} \label{sub:cpuresults}

Using heuristic estimates from \S\ref{sub:optimalestimate}, we search the full lattice for large, heuristically optimal twins. %We restrict to CPU sieving for the time being. %The analysis from the previous section showed that one SVP call should be enough to expect to find these twins.
From Proposition~\ref{prop:alphaopt}, the optimal $\alpha$ for a $B$-smooth twin $(r,r+1)$ is $\alpha_{\mathrm{opt}} \approx \sqrt{\beta_2/(n-1)} \cdot r$. With weights $\alpha_i = \log(p_i)$, $\sqrt{\beta_2/(n-1)} = O(\sqrt{\log(n)})$ (Proposition~\ref{prop:shortopt}).
For an unknown $b$-bit twin, we approximate $\alpha \approx \mu \sqrt{\log(n)}2^b$ for small $\mu$. Experiments suggest $\sqrt{\beta_2/(n-1)}$ remains small for other values $\alpha_i = \log^e(p_i)$.  %So once again $\log_2(\alpha)$ can be chosen to be a little larger than the target bit size. We observe this further in~\S\ref{sub:influence_of_parameters}.

For example take $B = 200$, the optimal twin is estimated at $\approx 91.14$ bits (Table~\ref{tab:optsize}). Using $\alpha = 2^{94}$ and weights $\alpha_i = \log(p_i)$ in $\lat_{2^{94},\log(p_i),P_{200}}$, the 6th shortest vector yields a previously unknown $95$-bit $199$-smooth twin. 
% For example lets take $B = 200$. The optimal $200$-smooth twins is estimated to have $\log_2(r) \approx 91.139$ (see Table~\ref{tab:optsize}). With a choice of weights $\alpha_i = \log(p_i)$ and choosing $\alpha = 2^{94}$ (say) we find the shortest vectors in the lattice $\lat_{2^{94},\log(p_i),P_{200}}$. The 9th shortest vector in this lattice corresponds to the following $95$-bit $199$-smooth twin which was not known prior to this work %\bruno{maybe remove one of these twins??}
\begin{align}
\begin{split} \label{eqn:twin199}
	r & = 3^2 \cdot 5^2 \cdot 7^5 \cdot 11^3 \cdot 59 \cdot 71^2 \cdot 101 \cdot 127 \cdot 173^2 \cdot 197 \cdot 199 \mbox{ and} \\
	r+1 & = 2^{10} \cdot 13 \cdot 17^2 \cdot 23 \cdot 37^2 \cdot 41 \cdot 47 \cdot 61 \cdot 79 \cdot 107^2 \cdot 113^2 \cdot 137.
\end{split}
\end{align}
%and the 20th shortest vector the following $91$-bit $197$-smooth twin
%\begin{align*}
%	r & = 5^2 \cdot 7^2 \cdot 11^2 \cdot 23 \cdot 37 \cdot 53 \cdot 59 \cdot 61^2 \cdot 67 \cdot 89 \cdot 97^2 \cdot 109 \cdot 197 \mbox{ and} \\
%	r+1 & = 2^7 \cdot 3^{12} \cdot 13^3 \cdot 17 \cdot 31 \cdot 41 \cdot 47 \cdot 71^2 \cdot 113^2 \cdot 181.
%\end{align*}Not only were these twins not known prior to this work but they are also larger than the previous largest twins. 
Exceeding the estimate, this twin is likely optimal.
With $\alpha_{\mathrm{opt}} \approx 2^{96.324}$, this twin corresponds to the 3rd shortest vector in $\lat_{\alpha_{\mathrm{opt}},\log(p_i),P_{200}}$.
% This twin exceeds the heuristic estimate, so one can conjecture the optimality of this $200$-smooth twin. Moreover for this twin the optimal $\alpha$ has $\log_2(\alpha_{\mathrm{opt}}) \approx 96.324$; which now corresponds to the 5th shortest vector in the lattice $\lat_{\alpha_{\mathrm{opt}},\log(p_i),P_{200}}$.

\begin{figure}
	\centering
	\resizebox{0.875\textwidth}{!}{	
	\includegraphics{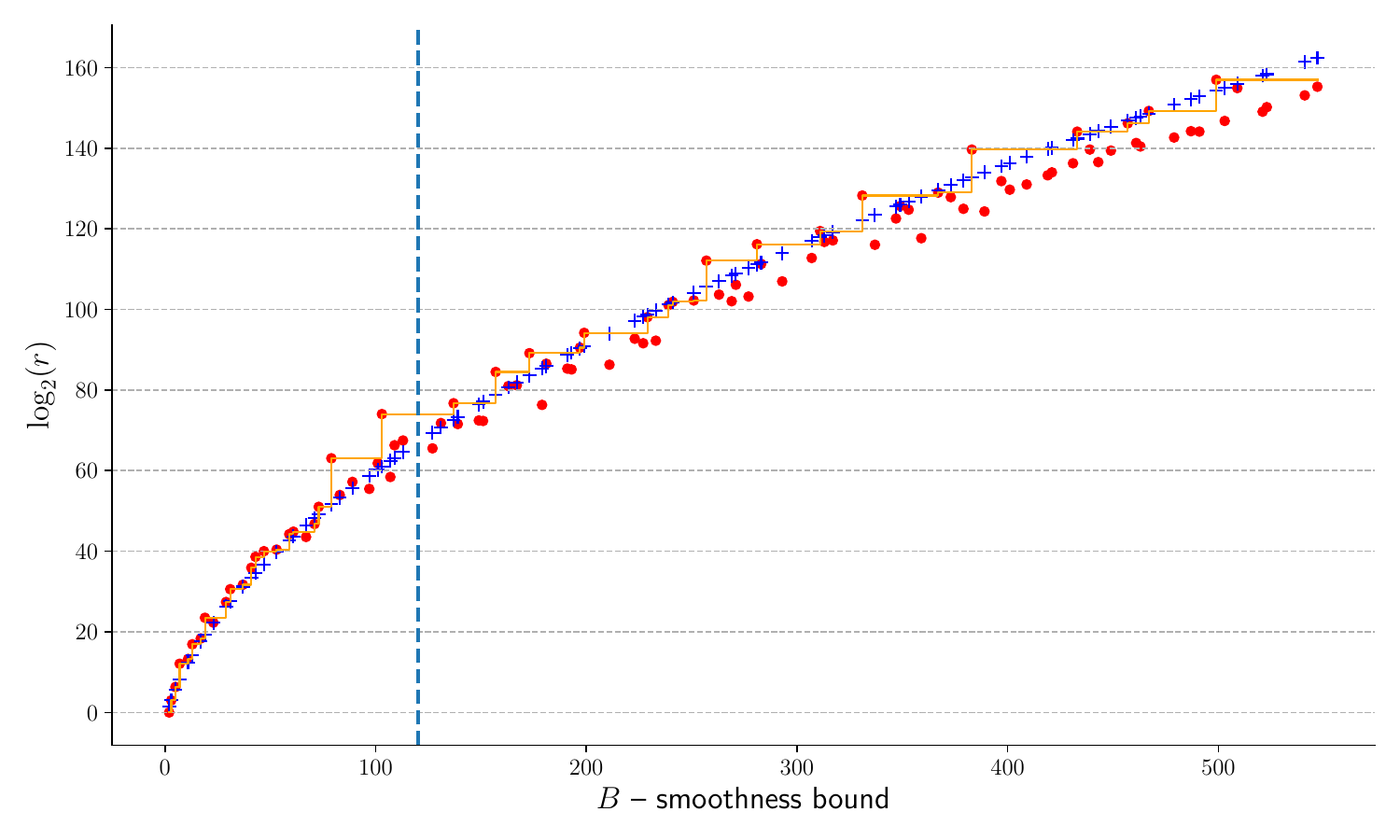}}
	\caption{Comparing the numerical estimates for optimal $B$-smooth twins (marked with a blue plus) against known largest twins with $B \leq 547$. The red dots mark the sizes of the largest \emph{strictly} $B$-smooth twins, while the orange line indicates the cumulative largest twin.
	%All twins up to the dotted vertical line were found by the Pell equation method. Between the vertical line and $B = 227$ the largest twins were either found by CHM or our lattice method. Beyond that all largest twins were all found by our method.
	}
	% The dotted vertical line marks the barrier between prior and new results. \erik{I get that we want to say that we are able to find (heuristically) optimal twins for much larger $B$, but this dotted line is misleading, as in [9] some of the larger ones were already found, see Table 3.}}
	\label{fig:opttwins547}
\end{figure}

The lattice dimension in this example is $\pi(200) = 46$, well below SVP records. An SVP call costs $2^{0.292\pi(B) + o(\pi(B))}$ (Section~\ref{subsec:svp}), which is significantly less than the $2^{\pi(B)+o(\pi(B))}$ lower bound Pell equations cost (\iffullversion \S\ref{subsec:consmethod} \else Section~\ref{sec:prior}\fi ). 
% The dimension of the full lattice $\lat_{2^{94},\alpha_i,P_{200}}$ is $\pi(200) = 46$ and SVP records have much larger dimensions. More specifically one SVP call comes at a cost of $2^{0.292\pi(B) + o(\pi(B))}$ (see~\S\ref{subsec:svp}). 
% This cost is smaller than the lower bound cost of solving $2^{\pi(B)+o(\pi(B))}$ Pell equations (see \iffullversion \S\ref{subsec:consmethod} \else Section~\ref{sec:prior}\fi ).
%Previously one could only (heuristically) find optimal twins by solving lots of Pell equations (see \iffullversion \S\ref{subsec:consmethod} \else Section~\ref{sec:prior}\fi ) at a lower bound cost of $2^{\pi(B) + o(\pi(B))}$. 
This saving is the main contributing factor that allows us to \iffullversion work with larger factor bases and \fi find larger optimal twins. \iffullversion We include Figure~\ref{fig:opttwins547} -- an analogue comparison done in Figure~\ref{fig:opttwins113} to cover the largest $B$-smooth twins that we found for $B \leq 547$. We list these twins in Appendix~\ref{app:opt}. \else Figure~\ref{fig:opttwins547} compares the strictly largest $B$-smooth twins against our estimates from \S\ref{sub:optimalestimate}. \fi

Using CPU sieving in the lattice $\lat_{2^{182},\log(p_i),P_{647}}$, we found a conjectured optimal $180$-bit $647$-smooth twin: % and has factorisations:
\begin{align*}
\begin{split} %\label{eqn:twin647}
	r & = 2^4 \cdot 7^3 \cdot 17^2 \cdot 23 \cdot 47 \cdot 61 \cdot 131^4 \cdot 139^2 \cdot 163 \cdot 179 \cdot 197 \cdot 223 \cdot 257 \cdot 293 \\
	& \qquad \cdot 379 \cdot 443 \cdot 563 \cdot 569 \cdot 617 \cdot 647, \mbox{ and} \\
	r+1 & = 3 \cdot 5 \cdot 13 \cdot 19 \cdot 31^2 \cdot 41 \cdot 59 \cdot 71 \cdot 127 \cdot 137 \cdot 151^2 \cdot 233 \cdot 263 \cdot 307^2 \cdot 349^3 \\
	& \qquad \cdot 431 \cdot 467 \cdot 509 \cdot 523 \cdot 601 \cdot 643.
\end{split}
\end{align*}
%\else
%Using CPU sieving, we found a conjectured optimal $180$-bit $647$-smooth twin $r = \mathtt{0xce0295f08d51a59d94b2c4d7bffc8444a465e345ff610}$ in $\lat_{2^{182},\log(p_i),P_{647}}$.
%\fi

\subsection{Influence of parameters}
\label{sub:influence_of_parameters}
We examine the influence of the parameters using CPU-based G6K with $B=691$ ($\pi(B) = 125$) and a factor base $P \subseteq P_{691}$. We performed $64$ seeded trials for each parameter set. In the histograms \emph{normalised frequency} represents the total number of unique (unless otherwise stated) smooth twins found divided by $64$.
% We consider the influence of the parameters we can choose for the search of smooth twins.
% All experiments done in this section use the CPU-only implementation of G6K with $B=691$ (where $\pi(B) = 125$) and a factor base $P \subseteq P_{691}$. For each set of parameters we did $64$ trials, each with a different seed.
% We will typically plot histograms with the \emph{normalised frequency}, i.e., the total number of smooth twins found normalised by the $64$ trials, on the $y$-axis. Unless otherwise stated only \emph{unique} smooth twins are recorded over all $64$ trials. 
% We fix some default parameters, each of which we will change in individual experiments.
By default we fix the weights to be $\alpha_i = \log(p_i)^e$ with $e=1$, leave out $k=15$ primes in the guessing phase and lift $l=5$ dimensions with the lifting strategy (sieving dimension $125-k-l=105$). Each trial changes some of these parameters and takes a few hours on $16$ cores.
We study two regimes: \emph{normal} (maximising total number of twins found, $\alpha = 2^{150}$) and \emph{power-of-two} (seeking large powers of two, $\alpha = 2^{128}$). In the latter, we scale $\alpha_1 = \log(2)/\eta$ with $\eta = 20$.
% We consider two regimes of finding smooth twins of a particular large bit size: the \emph{normal} regime where we aim to find as many as possible; and the \emph{power-of-two} regime where we twins with a large power of two.
% Unless stated otherwise we fix $\alpha = 2^{150}$ and $\alpha = 2^{128}$ in the normal and power-of-two regime respectively.
% For the power-of-two case we fix $p_1 = 2$ and scale $\alpha_1 = \log(2)/\eta$ down by a factor $\eta = 20$.

\subsubsection*{Size of found twins ($\alpha$).}
In Figure~\ref{fig:alpha_vs_size} we consider the influence of $\alpha$ on the size of smooth twins $(r,r+1)$ that are found in the normal regime. We varied $\log_2(\alpha)$ from $144$ to $150$ with increments of $2$.
% See Figure~\ref{fig:alpha_vs_size} for the resulting histogram. 
\begin{figure}
	\includegraphics[width=\textwidth]{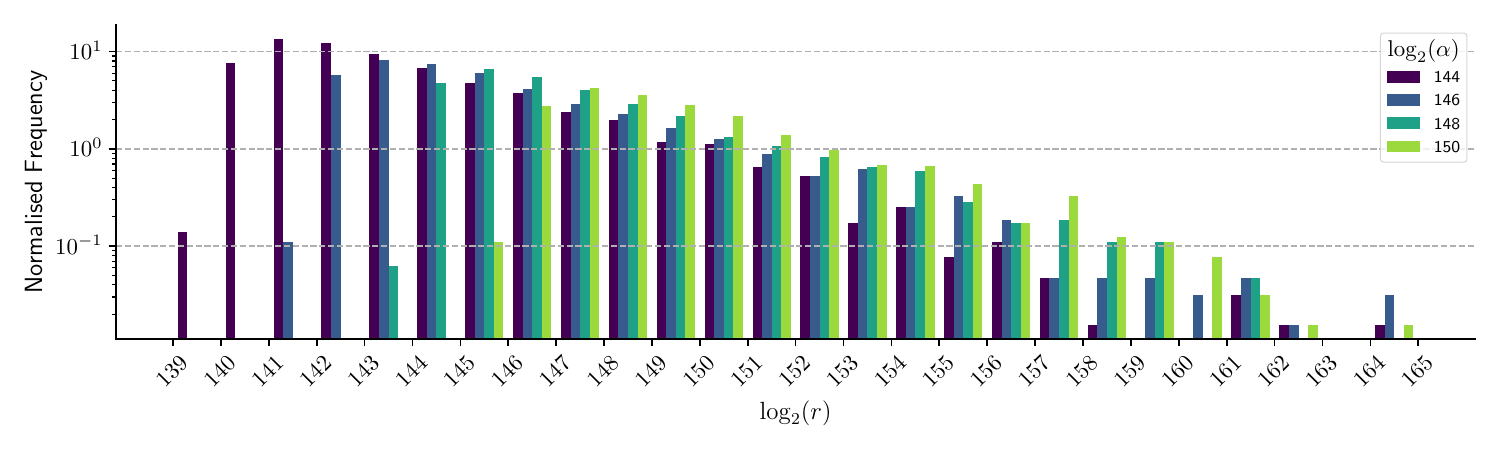}
	\caption{Histogram of the normalised number of smooth twins $(r,r+1)$ found of each bit size versus a varying scalar $\alpha$ in the normal regime.}% $\log_2(r)$
	\label{fig:alpha_vs_size}
\end{figure}
Consistent with Proposition~\ref{prop:alphaopt}, twin size scales with $\alpha$, peaking in the interval $\log_2(r) \in [\log_2(\alpha)-3, \log_2(\alpha)-2)$. Larger $\alpha$ yield fewer twins in total, reflecting the scarcity of larger $B$-smooth twins.

% As predicted by Proposition~\ref{prop:alphaopt} and analysed at the start of~\S\ref{sub:cpuresults}, we see that the twins we find are roughly proportional to $\alpha$. More precisely, for each choice of $\alpha$, we consistently see that the peak of the number of smooth twins found lies in the bucket $\log_2(r) \in [\log_2(\alpha)-3, \log_2(\alpha)-2)$.
% Furthermore for larger $\alpha$ we find fewer smooth twins in total compared to the smaller $\alpha$, which should be expected as there are simply fewer $B$-smooth twins of a larger size.

%Furthermore, while for larger $\alpha$ we find less smooth twins in total, we do find more larger ones relative to the smaller choice of $\alpha$. That we find less in total is expected as there are simply less $B$-smooth twins of a larger size.

\subsubsection*{Diagonal exponent ($e$)}
We varied the exponent $e$ in $\alpha_i = \log(p_i)^e$ from $0.5$ to $1.5$ (step $0.125$) in both regimes. Results are in Figure~\ref{fig:diagexp}.
% We consider the influence of $e$ in $\alpha_i = \log(p_i)^e$. 
% We vary $e$ from $0.5$ to $1.5$, with increments of $0.125$, in both regimes. See Figure~\ref{fig:diagexp} for the plots.
\begin{figure}
	\includegraphics[width=\textwidth]{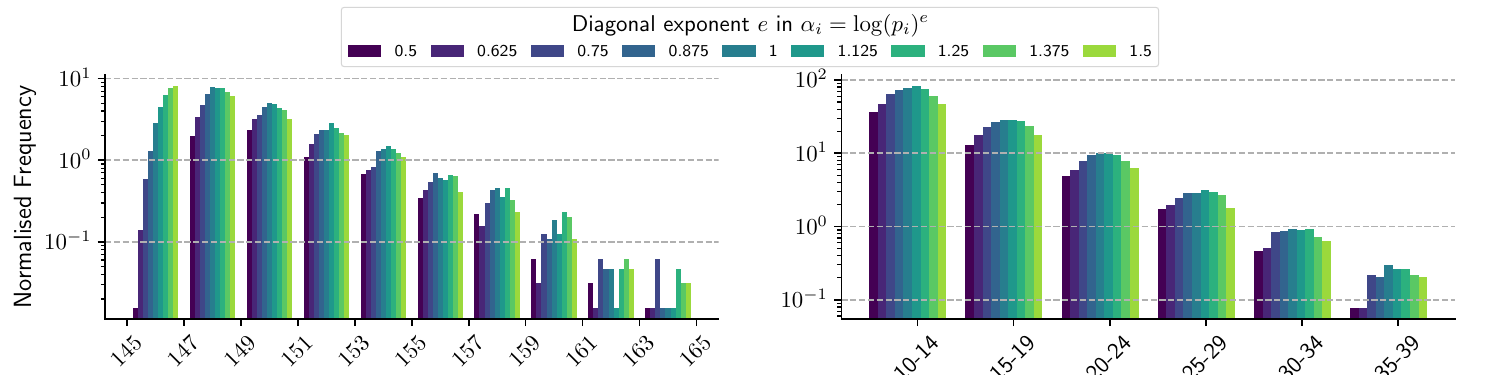}
%	\iffullversion
%	\caption{Histogram of the normalised number of smooth twins found when changing the diagonal exponent $e$ in $\alpha_i = \log_2(p_i)^e$. The left plot is in the normal regime and plotted against the size $\log_2(r)$ of each smooth twin $(r,r+1)$, while the right plot is in the powers-of-two regime and plotted against the power-of-two exponent $\val_2(r(r+1))$.}
%	\else
	\caption{Normalised number of smooth twins found when changing the weight $\alpha_i = \log_2(p_i)^e$. The left (right) plot is in the normal (power-of-two) regime and plotted against the size $\log_2(r)$ (power-of-two exponent $\val_2(r(r+1))$ respectively) of each smooth twin $(r,r+1)$.}
%	\fi
	\label{fig:diagexp}
\end{figure}
% In the normal regime we plot \iffullversion the histogram versus \fi $\log_2(r)$ for each smooth twin $(r,r+1)$. We see that an exponent $1.125 \leq e \leq 1.375$ between seems optimal, with $e = 1.25$ leading to the most smooth twins in total. We also observe a small dependency between the choice of exponent and the size distribution of the found twins relative to the fixed $\alpha$, e.g. for larger $e$ the average size of the found twins is slightly lower.
In the normal regime (plotting $\log_2(r)$), $1.125 \leq e \leq 1.375$ is optimal, peaking at $e = 1.25$. Larger $e$ slightly reduces the average twin size relative to $\alpha$.
In the power-of-two regime (plotting $\val_2(r(r+1))$), $0.875 \leq e \leq 1.25$ is optimal, peaking at $e = 1.125$.
% In the powers-of-two regime we plot \iffullversion the histogram versus \fi $\val_2(r(r+1))$ in the found smooth twins. We see that an exponent $0.875 \leq e \leq 1.25$ seems optimal, with a peak at $e = 1.125$.% leading to the most smooth numbers with a large power of two in $r(r+1)$.

%In the normal regime an exponent $e$ between $1.125$ and $1.375$ seems optimal, with $1.25$ leading to the most smooth twins in total. We also observe a small dependency between the choice of exponent and the size distribution of the found twins relative to the fixed $\alpha$, e.g. for larger exponents the average size of the found twins is slightly lower.
%
%In the powers-of-two regime an exponent $e$ between $0.875$ and $1.25$ seems optimal, with $1.125$ leading to the most smooth numbers with a large power of two in $r(r+1)$.

\subsubsection*{Guessing versus lifting ($k,l$).}
We compare guessing ($k$) and lifting ($l$) by varying $k \in \{ 5, 10, 15, 20 \}$ with $k+l = 20$, balancing lattice dimension against success probability\footnote{The case $k=0$ is not considered as otherwise each trial would use exactly the same lattice.}.
% Both guessing and dimensions for free give trade-offs between the lattice dimension one sieves in and the success probability of finding smooth twins. 
% So we compare their usage by considering $k$ guessing dimensions and $l$ lifting dimensions such that $k+l = 20$ is constant and vary\footnote{The case $k=0$ is not considered as otherwise each trial would use exactly the same lattice.} $k \in \{ 5, 10, 15, 20 \}$.
\iffullversion Both regimes are considered for this comparison and we make separate plots counting the normalised frequency for the number of found twins with or without duplicates. \fi
%\begin{figure}
%	\includegraphics[width=\textwidth]{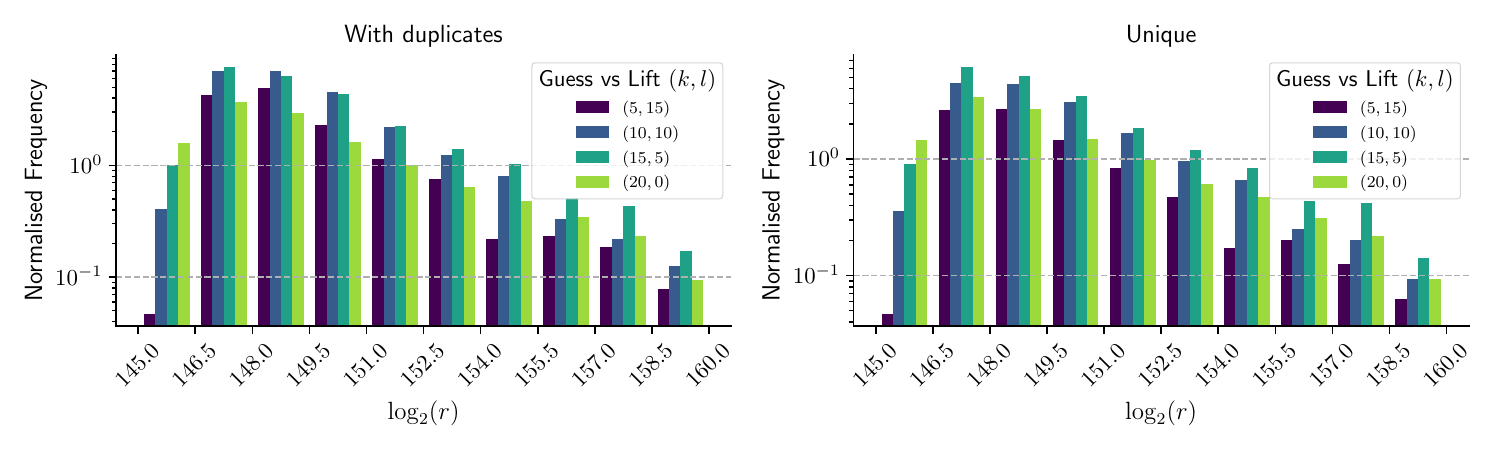}
%	\iffullversion
%	\caption{Comparison of two dimension reducing techniques, guessing versus lifting (d4f), in the normal regime. We vary the guessing and lifting dimensions $(k,l)$ while keeping their sum $k+l=20$ fixed. We show the normalised frequency with and without duplicates on the left and right respectively.}
%	\else
%	\caption{Comparing guessing versus lifting (d4f) in the normal regime. We vary the guessing and lifting dimensions $(k,l)$ while keeping their sum $k+l=20$ fixed.}
%	\fi
%	\label{fig:guess_vs_lift_normal}
%\end{figure}
In the normal regime (Figure~\ref{subfig:guess_vs_lift_normal}), counting duplicates favors $(10,10)$ or $(15,5)$ over $(5,15)$ or $(20,0)$. For unique twins, guessing dominates, with $(15,5)$ being optimal. Lifting appears to disproportionately find the same unusually short twins repeatedly.
% We first consider the results for the normal regime in Figure~\ref{subfig:guess_vs_lift_normal}.
% In the left plot, counting with duplicates, we see that a somewhat even division $(k,l)=(10,10)$ and favouring guessing $(15,5)$ are comparable, and significantly better than a lot of lifting $(5,15)$ or no lifting $(20,0)$. 
% In the right plot, only counting unique smooth twins, we however see that the balance shifts to favouring guessing $(k,l) = (15,5)$ being optimal.
% One explanation could be that the lifting technique disproportionately favours exceptionally short twins, which are thereby found multiple times in different trials once they can be represented by the guessed factor base.
%\begin{figure}
%	\includegraphics[width=\textwidth]{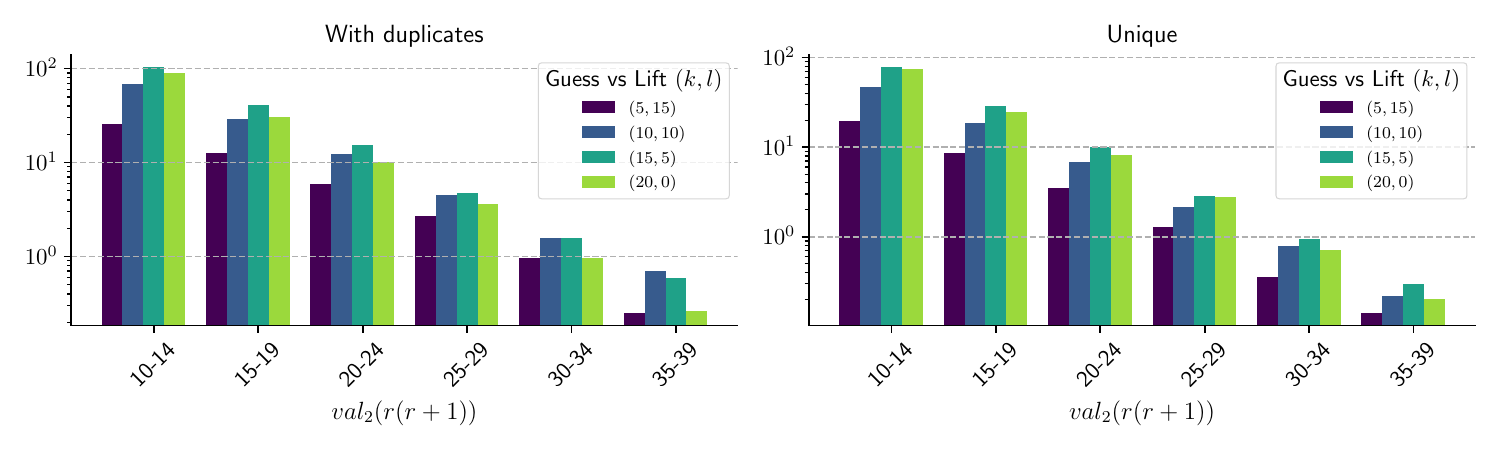}
%	\caption{Same as Figure~\ref{fig:guess_vs_lift_normal} but in the power-of-two regime.}
%	\label{fig:guess_vs_lift_pow2}
%\end{figure}
Similarly, in the power-of-two regime (Figure~\ref{subfig:guess_vs_lift_pow2}), $(15,5)$ is optimal for duplicates, while $(20,0)$ performs nearly as well for unique twins.
% In the powers-of-two regime we see a similar result in Figure~\ref{subfig:guess_vs_lift_pow2}.
% Here favouring guessing $(k,l) = (15,5)$ is optimal when counting duplicate{}s, but in the unique regime we see that $(20,0)$ comes close to the same performance.

\begin{figure}
	\begin{subfigure}{\textwidth}
		\includegraphics[width=\textwidth]{guess_vs_lift_700_150_1_1.pdf}
		\caption{Normal regime.}
		\label{subfig:guess_vs_lift_normal}
	\end{subfigure}
	\begin{subfigure}{\textwidth}
		\includegraphics[width=\textwidth]{guess_vs_lift_700_128_20_1.pdf}
		\caption{Power-of-two regime.}
		\label{subfig:guess_vs_lift_pow2}
	\end{subfigure}
	\caption{Comparing guessing versus lifting (d4f); varying the guessing and lifting dimensions $(k,l)$ while keeping their sum $k+l=20$ fixed.}
\end{figure}

% \begin{itemize}
% 	\item On the left it seems like a somewhat even division between guessing and lifting is optimal. 
% 	\item When only considering unique solutions, guessing becomes better than lifting though. Reason seems to be that some smooth twins (probably the ones that are unusually short) are heavily overrepresented when lifting.
% 	\item Will add more data for case where we search for regime where we search many $128$ bit solutions with high power of $2$.
% \end{itemize}

\subsubsection*{Scaling and exponent trick for finding large powers of two}
To find twins with large powers of two (see Section~\ref{sec:isogeny} for motivation), we test two methods. The first method replaces $p_1=2$ with $p_1=2^f$ ($f > 1$), ensuring found twins satisfy $f | \val_2(r(r+1))$. Increasing $f$ makes the lattice sparser, potentially aiding the search, but misses twins where $f \nmid \val_2(r(r+1))$.
% One could be interested in finding smooth twins with a large power of two (see Section~\ref{sec:isogeny} for instance). 
% We consider two methods to guide the search to such smooth twins.
% Firstly, one could simply replace $p_1=2$ in the factor base by $p_1=2^f$ for some $f > 1$. 
% In this case all smooth twins that are found in the corresponding lattice are guaranteed to have $f | \val_2(r(r+1))$ and $\val_2(r(r+1)) \geq f$ (as $\val_2(r(r+1)) \geq 1$).
% Furthermore, increasing $f$ makes the lattice sparser, and could therefore improve the chances of finding smooth twins that has a larger power of $2^{f}$.  %with a factor $2^{mf}$ for some $m \geq 1$. 
% A downside to this method as one cannot simultaneously find smooth twins with $f \nmid \val_2(r(r+1))$.
%finds smooth twins with such a power of two and those with a large exponent but that is not a multiple of $f$.
The second method decreases $\alpha_1$ by a factor $\eta > 1$, reducing the norm penalty for large powers of two. This makes the lattice denser and requires balancing the choice of $\eta$. We test $\alpha_1 = \log(p_1)/\eta$ for $\eta \in \{ 1, 5, 10, 20, 40 \}$.

% The second method decreases the weight $\alpha_1$ by a factor $\eta > 1$ so that the norm contribution from $|x_1 \alpha_1| = \val_2(r(r+1)) |\alpha_1|$ is not as significant when $\val_2(r(r+1))$ is large.
% % \erik{it's always lower, not just in that case}
% % \erik{Possible rewrite:\\
% % The second method decreases the weight $\alpha_1$ by a factor $\eta > 1$ so that the norm contribution from $|x_1 \alpha_1| = \val_2(r(r+1)) |\alpha_1|$ only becomes significant when $\val_2(r(r+1))$ is very large.}
% Informally, \iffullversion by decreasing the weight of $\alpha_1$, \fi the lattice vectors corresponding to a large power of two get a relatively lower norm. At the same time decreasing $\alpha_1$ also makes the lattice denser and thus one has to balance the choice of $\alpha_1$.
% We consider $\alpha_1 = \log(p_1)/\eta$ for scalars $\eta \in \{ 1, 5, 10, 20, 40 \}$, where $\eta=1$ corresponds to the normal regime.

\begin{figure}
	\includegraphics[width=\textwidth]{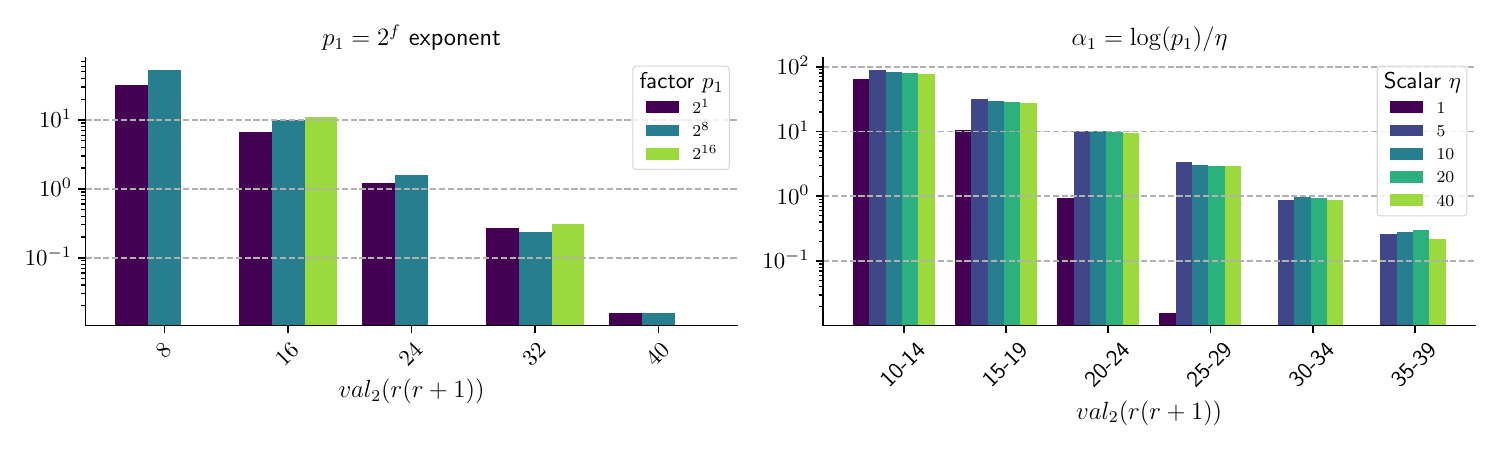}
	\caption{Influence of the power-of-two exponent $f$ and the scalar $\eta$ on finding smooth twins with a large power of two.}
	\label{fig:pow2_scaling_exponent}
\end{figure}

Results in Figure~\ref{fig:pow2_scaling_exponent} show that using $p_1=2^f$ yields more twins with $f | \val_2(r(r+1))$. Downscaling $\alpha_1$ with $\eta=5$ finds twins with $\val_2(r(r+1)) \geq 35$, whereas $\eta=1$ finds essentially none $\geq 25$. Increasing $\eta$ beyond $5$ has little effect.

% The results are shown in Figure~\ref{fig:pow2_scaling_exponent}. In the left plot we observe that replacing $p_1=2$ by $p_1=2^{f}$ for $f > 1$ does find more smooth twins with $f \mid \val_2(r(r+1))$.
% %This does come at the cost of finding less smooth twins with a large power of two overall.
% In the right plot we observe the effect of downscaling $\alpha_1$. With a factor $\eta=5$ we find smooth twins with $\val_2(r(r+1)) \geq 35$; while with $\eta = 1$ we essentially do not find any smooth twins with $\val_2(r(r+1)) \geq 25$.
% The influence of choosing a larger $\eta > 5$ is small. %\erik{can we say something about which one works better? Sort of hard to compare as the right plot uses larger buckets on the x-axis}

\subsection{Results from GPU lattice sieving} \label{sub:gpuresults}
We now detail our GPU experiments. The found twins are available at~\cite{zenodo}. Memory constraints limited the effective sieving dimension to $137$. Sieving at this dimension takes half a day on one PlaFRIM node.
% We now detail the results from our GPU experiments. Due to memory limitations we work with a maximal lattice sieving dimension of $137$ when accounting for lifting and guessing. Lattice sieving in this dimension takes half a day on a single node of the PlaFRIM cluster.
%When working with the full lattice with no lifting nor guessing we are able to find larger optimal twins compared to those found with CPU sieving in \S\ref{sub:cpuresults}. In particular with $B = 773$ (where $\pi(B) = 137$) we found the following $195$-bit $743$-smooth twin with factorisations:
With this, using $B = 773$ ($\pi(B) = 137$), we found a $195$-bit $743$-smooth twin \eqref{eqn:opt743} and a $196$-bit $751$-smooth twin \eqref{eqn:opt751}. Both align with heuristic optimality estimates.
% With this we are able to find larger \iffullversion optimal \fi twins compared to those found with CPU sieving in \S\ref{sub:cpuresults}. In particular with $B = 773$ (where $\pi(B) = 137$) we found the following $195$-bit $743$-smooth twin with factorisations given in~\eqref{eqn:opt743}; as well as the $196$-bit $751$-smooth twin mentioned in~\eqref{eqn:opt751}. Both of these twins are believed to be optimal in accordance to the heuristic estimates. %We ran experiments with $B = 775$ (where $\pi(B) = 137$) to find larger twins but we report no larger twins. \bruno{update this is necessary}
\begin{align} \label{eqn:opt743}
\begin{split}
	r & = 7 \cdot 19^3 \cdot 23 \cdot 37 \cdot 47^2 \cdot 79 \cdot 89 \cdot 97^3 \cdot 103 \cdot 211 \cdot 229^2 \cdot 271^2 \cdot 283 \cdot 307 \\
	& \qquad \cdot 401^2 \cdot 503 \cdot 541 \cdot 557 \cdot 631 \cdot 643, \mbox{ and} \\
	r+1 & = 2^{11} \cdot 5^2 \cdot 11 \cdot 13 \cdot 43^2 \cdot 53 \cdot 59 \cdot 131 \cdot 137 \cdot 149 \cdot 163 \cdot 223^3 \cdot 281 \cdot 313 \\
	& \qquad \cdot 337 \cdot 353 \cdot 409 \cdot 449 \cdot 547 \cdot 563 \cdot 653 \cdot 709 \cdot 743.
\end{split}
\end{align}

%Going beyond this to find even larger optimal twins by purely solving SVP in the full lattice becomes computationally challenging. To alleviate this we incorporate a combination of lifting and/or guessing to find larger twins. In addition a larger amount of post-processing is required by working in a larger dimensional lattice. While the computational cost is orders of magnitude less than the SVP solving in such a lattice, it will still be a non-trivially large amount of work. 

%Working with a larger $B$ (where $\pi(B) > 137$) and finding optimal twins from the full lattice becomes computationally challenging. So we incorporate a combination of lifting and guessing to our lattice and start with lifting only. For instance, with $B = 823$ and a lifting amount of $l = 5$ in the full lattice, we found the following $200$-bit $809$-smooth twin
For larger $B > 773$ ($\pi(B) > 137$), we use a combination of lifting and guessing. For instance with $B = 823$ and lifting $l = 5$ dimensions, we found a $200$-bit $809$-smooth twin (estimated optimal size: $202$ bits).
% For larger $B$ (where $\pi(B) > 137$), we incorporate a combination of lifting and guessing to our lattice. For instance, with $B = 823$ and a sole lifting amount of $l = 5$ in the full lattice, we found the following $200$-bit $809$-smooth twin (the estimates suggests that optimal $809$-smooth twins is predicted to have $202$ bits)
%\wessel{as we need cannot sieve in the full dimensional lattice}
%\wessel{Hmm I don't think it is much work actually with some tweaks I did so maybe we should leave this out. The problem is more the drop in success probability from lifting and guessing.}In addition to the larger SVP cost, a larger amount of post-processing is required by working in a larger dimensional lattice. While this computational cost is orders of magnitude less than the SVP cost, it will still be a non-trivially large amount of work.
%\wessel{trade-off between d4f and guessing when finding larger twins.} \bruno{start this}
\begin{align*}
	r & = 2^6 \cdot 17^3 \cdot 23 \cdot 29^2 \cdot 47 \cdot 53^2 \cdot 59 \cdot 67^2 \cdot 107 \cdot 131 \cdot 137 \cdot 163 \cdot 193 \cdot 271 \\
	& \qquad \cdot 277 \cdot 283 \cdot 313 \cdot 359 \cdot 397 \cdot 433 \cdot 523 \cdot 751 \cdot 787 \cdot 809, \mbox{ and} \\
	r+1 & = 3^{13} \cdot 5^2 \cdot 7^2 \cdot 11 \cdot 19 \cdot 41^2 \cdot 71 \cdot 73 \cdot 89 \cdot 139 \cdot 179 \cdot 263^2 \cdot 269 \cdot 557^2 \cdot 577 \\
	& \qquad \cdot 587^2 \cdot 607 \cdot 677 \cdot 683 \cdot 709 \cdot 733.
\end{align*}
%This smooth twin is close to the estimates for optimal $809$-smooth twins which is predicted to have $202$ bits. %So we are unsure whether this twin is optimal or not.
This twin can also be found with guessing $k=5$. One should remove a subset $Q \subseteq P_B$ devoid of factors of $r(r+1)$ and small primes. The success probability for choosing $Q \subseteq P_B \setminus P_{300}$ with $\# Q = 5$ is $\approx 0.3224$.
% Alternatively this twin can be found with a sole guessing amount of $k = 5$. To have success with this, one should choose a removal subset $Q \subseteq P_B$ that has no primes in the factorisation of $r(r+1)$ and does not contain small primes (which are more likely to appear in the twin). %with (say) $B = 823$ and
% Leaving out some of the larger primes is generally a good strategy since these are less likely to appear in the smooth twin.
% Also, $Q$ should not contain small primes since these primes are more likely to appear in the large twin. \erik{This is redundant because of the previous sentence. But we can mention that throwing out some of the larger primes is generally a good strategy.}
% \erik{For example: Generally, small primes should not be placed in $Q$, as these primes are more likely to appear in the large twin.}
% The probability of choosing an appropriate subset $Q \subseteq P_B \setminus P_{300}$ of size $\# Q = 5$ is roughly ${65 \choose 5}/{81 \choose 5} \approx 0.3224$ which is not too small.
% (where $\pi(B) - \pi(300) = 81$ and $\#\{ p \in P_B \setminus P_{300}~:~p \mid r(r+1) \} = 16$).

%So this smooth twin gets close to the estimate. Although it is plausible that there must be a smooth twin that meets this estimate. For instance the comparison with optimal twins for small $B$ and its estimates (Figure~\ref{fig:opttwins113}) occasionally overestimates the actual optimal twin. So one might weakly conjecture\footnote{\bruno{maybe go over what I mean by weakly conjecture}} that this smooth twin is optimal.

Increased lifting and guessing finds larger twins but potentially sacrifices optimality. For instance for $B = 997$ ($\pi(B) = 168$), we ran experiments for varying $k + l = 168-137 = 31$ and $k > l$ following \S\ref{sub:influence_of_parameters}. With $k = 18, l = 11$, we found a $213$-bit $997$-smooth twin \eqref{eqn:twin997}, which is far from optimal.
Experiments with larger $B$ and increased lifting/guessing yielded no larger twins.

% By increasing the amount of lifting and guessing we can find larger $B$-smooth twins than this $809$-smooth twin. However, one cannot expect to find twins that are close to optimal anymore, especially if the combined lifting and guessing is large. For instance we did experiments with $B = 997$ (where $\pi(B) = 168$) and several lifting and guessing amounts, $l$ and $k$ (respectively), such that $k + l = 168 - 137 = 31$ and $k > l$ (in accordance to the analysis in~\S\ref{sub:influence_of_parameters}).
% In particular, in one of these runs with $l = 11$ and $k = 18$, we found the $213$-bit $997$-smooth twin mentioned in~\eqref{eqn:twin997}\iffullversion . According to the heuristic estimates, this twin is not at all close to the estimated largest $997$-smooth twin which is predicted to have $227$ bits. \else ~which, as noted in the introduction, is far from optimal. \fi

% Additionally we did experiments with larger $B$ with more combined lifting and guessing. However we did not find any larger twins than this $997$-smooth twin.

\subsubsection*{Cryptographic smooth twins.} 
% For smooth twins suitable for isogeny-based cryptography, we recall that $256$-bit twins are necessary. In Section~\ref{sec:isogeny} we will see that smooth twins are not strictly necessary for certain cryptographic applications\iffullversion , as a few larger factors are allowed. However \else ~but for the purpose of this discussion we discuss the challenges of finding \iffullversion cryptographic smooth \else such \fi twins.
Isogeny-based cryptography typically requires $256$-bit twins. While Section~\ref{sec:isogeny} discusses relaxing this requirement, we address here the challenge of finding such twins.
Table~\ref{tab:optsize} suggests needing $B \approx 1250$ ($\pi(B)=204$) for $256$-bit twins. Current SVP records reach dimension $200$ (with lifting)~\cite{albrecht2019general,ducas2021advanced,zhao2025sieving}\iffullversion \footnote{For SVP records see \url{https://www.latticechallenge.org/svp-challenge/}.}\fi, putting this just beyond reach. Even if one incorporates lifting and guessing, one would need a large amount of trials in order to find the desired smooth twin -- adding to the computational cost.

\subsection{Towards enumerating all $B$-smooth twins}

The main obstruction to finding all $B$-smooth twins with our lattice approach is that not all smooth twins correspond to short vectors in the full lattice (see Figure~\ref{fig:ratios_199}). To find them through SVP instances a sufficient (and possibly large) amount of guessing is required (we discuss this further in Remark~\ref{remark:alltwins}).

Alternatively one can use the lattice approach to add previously undiscovered twins to a known set of $B$-smooth twins. One still needs to incorporate guessing to find the obscure twins. If the known set is large then one can minimise the amount of necessary searching and plausibly conjecture the complete set of $B$-smooth twins. 

A large proportion of $B$-smooth twins for $B \leq 547$ was found using the CHM algorithm~\cite{bruno2022cryptographic} (\iffullversion \S\ref{subsec:consmethod}\else Section~\ref{sec:prior}\fi). Notably the larger $B$-smooth twins are not found with the CHM algorithm, most of which our SVP solver should find with minimal guessing. We extensively searched for $B$-smooth twins with $B < 400$ and found many new twins. In particular we found 25 new $200$-smooth twins including the twin in \eqref{eqn:twin199}. We did further experiments to find more $200$-smooth twins with large guessing amounts. After 1-2 weeks of effort no further twins were found. So we conjecture to have the complete set of $200$-smooth twins as summarised in Conjecture~\ref{conj: 200 smooth twins}, and available at~\cite{zenodo}.

No formal claims are made for larger $B$ but we note down in Table~\ref{tab:numsmooth} the cumulative number of $B$-smooth twins found in this and prior work. We did not extend our experiments to $B = 547$ due to the increasing computational effort.
\begin{conjecture}
\label{conj: 200 smooth twins}
	There are exactly 348,865 many $200$-smooth twins.
%	\wessel{Should we keep at least one line of motivation for this conjecture? E.g. how much searching we did without finding anything new?}
\end{conjecture}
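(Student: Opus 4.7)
The plan is to enumerate all $200$-smooth twins exhaustively and verify the count is $348{,}865$. By the correspondence recalled in \S\ref{subsec:consmethod}, every $200$-smooth twin $(r,r+1)$ arises, via $x = 2r+1$, as a positive integer solution of a Pell equation $X^2 - DY^2 = 1$ where $D$ ranges over the $2^{\pi(200)} = 2^{46}$ squarefree divisors of $\prod_{p \leq 200} p$. For each $D$, I would compute the fundamental solution from the continued fraction expansion of $\sqrt{D}$, then iterate the induced recurrence until reaching Lehmer's provable cutoff $N_D$~\cite{lehmer1964problem}, beyond which no further $200$-smooth solution can appear. Each $(x_n,y_n)$ within this range would be factored, tested for $200$-smoothness of $r(r+1)$, and recorded when appropriate. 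A final deduplication and count then settles the conjecture.

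A complementary route uses the lattice machinery of this paper. Starting from the large set of twins already produced by CHM and by our SVP searches, one would perform a provably complete sweep of the sublattices $\lat_{\alpha, \alpha_i, P}$ obtained by guessing a fixed number $k$ of absent primes. For $k$ large enough relative to the (Theorem~\ref{thm:opt}-based) bound on the number of prime factors of any $200$-smooth twin, every such twin must lie in some $P \subseteq P_{200}$ of size $46 - k$, and its corresponding lattice vector must be short enough to be returned by a provable SVP algorithm on $\lat_{\alpha,\alpha_i,P}$. Exhausting all $\binom{46}{k}$ sublattices and a finite grid of scaling parameters $\alpha$ would then certify completeness.

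The principal obstacle, shared by both routes, is scale rather than logic. The Pell approach must handle $2^{46}$ equations whose fundamental units may be doubly exponential in $B$, placing a fully certified enumeration beyond current computational reach even with the early-abort strategy of~\S\ref{subsec:consmethod}. The lattice approach faces the combinatorial explosion of~\eqref{eqn:guessing_complexity}, and, more delicately, requires replacing the heuristic ratio analysis of Section~\ref{sec:analysis} with an \emph{unconditional} bound on $\norm{\vv}/\gh(\lat_{\alpha,\alpha_i,P})$ valid for the lattice vector of every (not merely optimal) $200$-smooth twin, together with an unconditional guarantee that the chosen sieve returns all vectors up to that bound. In either case, the conjecture reduces to a very large but finite certified computation, and my proposal is therefore less a proof than a blueprint for one: the mathematical content is routine once the computational budget is available.
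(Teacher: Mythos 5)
The statement is a conjecture and the paper offers no proof of it: its justification is experimental saturation (CHM plus extensive heuristic lattice searching with guessing found no $200$-smooth twins beyond the 348{,}865 recorded in Table~\ref{tab:numsmooth}), and the paper itself names exactly the two certification routes you describe — exhaustively solving the $2^{46}$ Pell equations, or a provably complete guessing sweep at cost~\eqref{eqn:guessing_complexity} — declaring both beyond present reach. Your blueprint therefore matches the paper's own position, including the honest admission that neither route is currently feasible and that an unconditional bound on $\norm{\vv}/\gh(\lat)$ for every (not merely optimal) twin is the missing ingredient on the lattice side.
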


\begin{table}
	\centering
	\renewcommand{\tabcolsep}{0.2cm}
	\renewcommand{\arraystretch}{1.15}
	%\resizebox{\textwidth}{!}{
		\begin{tabular}{cccc}
			& CHM\textsubscript{$B$} & CHM\textsubscript{$547$} & SVP \\
			\hline
			$B = 150:$ & 73,694 & 74,006 \emph{(+312)} & 74,007 \emph{(+1)} \\
			$B = 200:$ & 346,192 & 348,840 \emph{(+2,648)} & 348,865 \emph{(+25)} \\
			$B = 250:$ & 826,750 & 835,613 \emph{(+8,863)} & 835,880 \emph{(+267)} \\
			$B = 300:$ & 2,316,631 & 2,350,100 \emph{(+33,469)} & 2,353,597 \emph{(+3,497)} \\
			$B = 350:$ & --- & 5,431,970 & 5,456,952 \emph{(+24,982)} \\
			$B = 400:$ & --- & 11,840,978 & 11,980,453 \emph{(+139,475)}
			%		$B = 550:$ & --- & 82,026,426 & 82,100,111 \emph{(+73,685)} \\
		\end{tabular}%}
	\vspace{1ex}
	\caption{Cumulative number of $B$-smooth twins found. The subscript in CHM\textsubscript{$B$} denotes applying the CHM algorithm with $B$ as input.}
	\label{tab:numsmooth}
\end{table}

%\subsubsection*{Name subsection} \bruno{Kept the start as we had it before and included this here. This should be refined.} We now address the problem of finding all $B$-smooth twins for some fixed $B > 113$. The main obstruction to doing this with our lattice approach is that not all smooth twins correspond to particularly short vectors in the full lattice (see Figure~\ref{fig:ratios_199}). To find them through SVP instances a sufficient amount of guessing is required (which might be large depending on the specific twin). If $k$ is a guessing amount then there are a total of ${\pi(B) \choose k}$ subsets $P \subseteq P_B$ of size $n-k$. The cost of sieving in this prime number lattice with factor base $P$ is $O \left( 2^{0.292(\pi(B)-k)} \right)$. So the total cost of going through each subset $P$ for a fixed guessing amount $k$ is
%\begin{align} \label{eqn:guessing_complexity}
%O \left( \kappa_{k,\alpha} {\pi(B) \choose k} 2^{0.292(\pi(B)-k)} \right)
%\end{align}
%where $\kappa_{k,\alpha}$ is the number of distinct $\alpha$'s that are chosen. To make this more precise, one would need a provable guessing amount $k$ as well as a bound on $\kappa_{k,\alpha}$ to assert all $B$-smooth twins have been found. It is beyond the scope of this work to ascertain such provable results.

\begin{remark} \label{remark:alltwins} %\erik{I rewrote this quite a bit, needs check}
By Theorem~\ref{thm:opt}, the number of distinct prime factors in $B$-smooth twins is heuristically at most $O(\pi(4e\sqrt{B})) = O(\sqrt{B}/\log(B))$ which for large $B$ is only a small fraction of all $O(B/\log(B))$ primes up to $B$.
%    Recall from Theorem \ref{thm:opt} that we expect the optimal $B$-smooth twin to be about $e^{2e\sqrt{B}}$.
%    This means that when $B$ is large, the optimal twin only uses about $4e\sqrt{B}/\log(B)$ primes from the factor base, which consists of about $B/\log(B)$ primes.
%    Therefore,~\eqref{eqn:guessing_complexity} suggests that for very large $B$, it would be better to do a large amount of guessing.
%    For example, the number of subsets of size $m = 4e\sqrt{B}/\log(B)$ of $P_B$ that one would need to consider is $O \left( \pi(B)^m \right) \subset O \left( e^{4e\sqrt{B}} \right)$.
Therefore, by doing a very large amount of guessing, with say $k = \pi(B) - O(\sqrt{B}/\log(B))$, we could use the SVP approach alone to find all $B$-smooth twins. Trying all ${\pi(B) \choose k}$ subsets $P \subseteq P_B$ of size $n-k$ and polynomially many distinct $\alpha$'s, the resulting complexity turns out to be subexponential in $B$. Despite this potential gain, it is only practical for very large $B$ and not the small $B$ that we have worked with.
%of $$O \left( \kappa_{k,\alpha} e^{O(\sqrt{B}) + o(\sqrt{B})} \right) $$
%to find all $B$-smooth twins. While this gives a subexponential complexity, it is not practical for the small values of $B$ we have worked with. %However for large $B$, this will eventually be better than sieving in the full lattice to (say) simply find the largest $B$-smooth twin.
A similar heuristic analysis can be made with the Pell equation approach.
%For all square-free discriminants $D$ that are the product of $n-k$ or fewer primes below $B$,
For all square-free discriminants $D \leq e^{4e \sqrt{B}}$, 
one would early abort solving the equation $x^2-Dy^2=1$ using the continued fraction of $\sqrt{D}$, namely when $x$ becomes larger than $2e^{2e\sqrt{B}}$. This should also give a subexponential complexity, but which only pays off for huge $B$.
%However, this does not yet seem to be a good strategy for the values of $B$ that are currently computationally feasible. For example, the Pell equation method uses a more sophisticated guessing method, as only the square-free part has to be guessed, yet the lattice method has enabled us to find much larger smooth twins.
\end{remark}

%\subsubsection*{Asymptotic cardinality of $B$-smooth twins.} Let $N_B$ denote the number of $B$-smooth twins. A natural question to ask is how $N_B$ grows. By solving Pell equations, the bound $N_B \leq (B+1)(2^{\pi(B)}-1)/2$ for $B \geq 5$ is known~\cite[Theorem~4]{lehmer1964problem}. This is a bad measurement of $N_B$ based on the known and conjectured $B$. Taking inspiration from Theorem~\ref{thm:opt} one might expect a better bound where $\pi(B)$ in the exponent is replaced by $\Theta(\sqrt{\pi(B)})$.
%square root cancellation \erik{I think `square root cancellation' (see Google) doesn't make sense here as our sum is cumulative and doesn't contain negative terms? We can keep the conjecture of course} in this bound for $N_B$. 
%When interpolating the data for $B < 200$ this trend does appear and one can claim that $N_B \sim \exp (C \sqrt{\pi(B)})$ as $B \to \infty$ for some constant $0.5 < C < 1.5$. We do not give a proof nor provide an explicit constant $C$.

%\begin{conjecture}
%	There exists some constant $1 < C < 2$ such that, as $B \to \infty$, we have the following asymptotic on the number of $B$-smooth twins
%	\begin{align*}
%		N_B & \sim 2^{C \sqrt{\pi(B)}}.
%	\end{align*}
%\end{conjecture}

%\fi
%!TEX root = ./paper.tex
\section{Applications}\label{sec:isogeny}

% This section could also include applications outside of isogeny crypto

\subsection{SQIsign parameters} \label{subsec:sqisign}

\iffullversion SQIsign is a family of post-quantum signature schemes based on the Deuring correspondence. The high-level idea in each variant is the same but the algorithmic tools in practice differ. For accessible resources on SQIsign and its variants we refer the reader to~\cite{SQISign,luca2022new,leroux2022quaternion,chavez2023sqisign,corte2024apressqi,basso2024sqisign2d,nakagawa2024sqisign2d,NISTPQC-ADD-R2:SQIsign25}. \fi %(Antonin and LearnToSQI).

%To sign a message the idea is to prove knowledge of the endomorphism ring of the signers public supersingular curve. Normally this is hard to compute but the signer knows an isogeny to a special supersingular curve with known endomorphism ring\footnote{All instantiations of SQIsign take this special supersingular curve to be the one with $j$-invariant $1728$ and work in a characteristic $p = 3 \mod 4$.} and thus can compute it. Concretely the signature is an independent and efficient respresentation of an isogeny between two different curves which are constructed using the message. By efficient representation we mean there exists a polynomial time algorithm to evaluate the isogeny at any point (see~\cite[Definition~1]{} for a formal meaning). Verifying the signature simply amounts to checking the isogeny is correct through evaluation.
%
%The variants in this family correspond to how the signature isogeny is represented. In the original version of SQIsign this is a chain of $2$-isogenies. The advent of polynomial time attacks on SIDH gave a new representation. The idea is, through the attacks, embed this isogeny into a higher dimensional (HD) isogeny and evaluate this HD isogeny. This resulted in more efficient signing algorithms which was the bottleneck in the original version. In SQIsignHD this improvement came at a cost of working with $4$-dimensional isogenies which evaluating has been an improving task. The newer variants use $2$-dimensional isogeny respresentations and offer the best overall for signing and verification timings. 

We revisit the original version of SQIsign\iffullversion \else \footnote{For accessible resources on SQIsign and its variants we refer to~\cite{SQISign,luca2022new,leroux2022quaternion,chavez2023sqisign,corte2024apressqi,basso2024sqisign2d,nakagawa2024sqisign2d,NISTPQC-ADD-R2:SQIsign25}.}\fi, which we call SQIsign1D, and find new parameters. In particular one works with a prime $p$ such that $2^f \cdot T \mid p^2-1$ where $f$ is as large as possible and $T \approx p^{5/4}$ is odd and $B$-smooth. This condition implies that not all of $p^2-1$ needs to be smooth, which differs from the fully smooth twin scenario\iffullversion, i.e. a smooth twin $(r,r+1)$ with $p = 2r + 1$ so that $p^2-1 = 4r(r+1)$\fi . 

\iffullversion 
The main idea in this prime search is to adopt a \emph{boosting} strategy. For $256$-bit primes one uses the polynomial $p_2(x) = 2x^2 - 1$~(emblematic of the ideas in~\S\ref{subsec:probmethod}). For such a $p = p_2(r)$ we need smooth divisors in $r-1,r,r+1$ since $p^2 - 1 = 4r^2(r-1)(r+1)$. There are two ways of choosing $r$ that maximises the chances of meeting all requirements. In both cases one ensures that $r$ is smooth and has a large power of two (which is amplified by the squaring in $p_2+1$).
\else
The main idea in this prime search is to adopt a \emph{boosting} strategy and search for a prime of the form $p = p_2(r) = 2r^2 - 1$ \iffullversion (emblematic of the ideas in~\S\ref{subsec:probmethod}) \fi for a smooth $r$. As $p^2 - 1 = 4r^2(r-1)(r+1)$ we need smooth divisors in $r-1,r+1$. We also ensure that $r$ has a large power of two (which is amplified by the squaring in $p_2+1$). %The idea is to find a smooth $r$ with a large power of two (which is amplified by the squaring in $p_2+1$).
\fi

\subsubsection*{Sieve-and-boost approach.} Instead of doing an exhaustive search on smooth $r$ which is expensive, one can do a tailored exhaustive search over integers of the form\footnote{The inclusion of the power of three is not mandatory but it simplifies certain steps in the signing procedure and was a design choice made in the original NIST submission.} $r = 2^{f'} \cdot 3^{g'} \cdot m$ where $m$ is smooth. Then for each such $r$ factorise the smooth parts of $r-1$ and $r+1$ and see if this gives the necessary amount of smoothness. This strategy was adopted for the first round of the NIST submission~\cite{chavez2023sqisign}\iffullversion ~and found the prime $p = p_2(r)$ where $r = 2^{37} \cdot 3^{18} \cdot 2053899652631121509$ with factorisations %Some modifications can be done to this search by incorporating XGCD over the integers similar to those described in \S\ref{subsec:probmethod}. 
\begin{align*}
	p+1 & = 2^{75} \cdot 3^{36} \cdot 23^{2} \cdot 59^{2} \cdot 101^{2} \cdot 109^{2} \cdot 197^{2} \cdot 491^{2} \cdot 743^{2} \cdot 1913^{2}, \mbox{ and}  \\
	p-1 & = 2 \cdot 7^{4} \cdot 11 \cdot 13 \cdot 37 \cdot 89 \cdot 97 \cdot 107 \cdot 131 \cdot 137 \cdot 223 \cdot 239 \cdot 383 \cdot 389 \cdot 499 \\
	& \qquad \cdot 607 \cdot 1033 \cdot 1049 \cdot 1193 \cdot 1973 \cdot {\color{lightgray}32587069} \cdot {\color{lightgray}275446333} \\
	& \qquad \cdot {\color{lightgray}1031359276391767}.
\end{align*}
The factors highlighted in {\color{lightgray} grey} are the non-smooth factors which are not used in SQIsign1D. A more in-depth description of this strategy is given in~\cite{santos2024finding}. \else . \fi

\subsubsection*{Twin-and-boost approach.} %Recall that $p^2 - 1 = 4r^2(r-1)(r+1)$ for $p = p_2(r)$. 
%Observe that i
If $(r,r \pm 1)$ is a smooth twin with $2^{f'} \mid r$ and $p = p_2(r)$, then $p^2-1$ has a smooth factor $4r^2(r \pm 1) \approx p^{3/2}$. If $f = 2f'+1 \leq \log_2(p)/4$ then $2^f \cdot T  \leq p^{3/2}$; so one simply needs to check primality of $p$. For larger $f$ one requires more small prime factors in $r \mp 1$. Depending on how much larger $f$ is relative to $\log_2(p)/4$, the probability that one has these factors is not too small~\cite{banks2006integers}. 

So a sufficiently good \emph{smooth twin oracle} \iffullversion producing smooth twins with a large power of two \fi can \iffullversion efficiently \fi give SQIsign1D parameters. This observation was first noted in~\cite{bruno2022cryptographic} but they were unable to instantiate the oracle with the CHM algorithm. On the other hand we can instantiate this properly with our lattice approach aimed at finding smooth twins with a large power of two (see~\S\ref{sub:influence_of_parameters}). In particular we found the exceptional prime $p = p_2(r)$ \iffullversion where $r = 2^{31} \cdot 2493490582368659543466244025$ \fi with factorisations %5354730252078693477272904283665203200
\begin{align*}
	p+1 & = 2^{63} \cdot 5^4 \cdot 23^4 \cdot 67^2 \cdot 71^2 \cdot 73^2 \cdot 89^2 \cdot 113^2 \cdot 137^4 \cdot 163^2 \cdot 229^2 \cdot 263^2 \cdot 293^2, \mbox{ and}  \\%\\
	%	& \qquad 
	p-1 & = 2 \cdot 3 \cdot 7^2 \cdot 11 \cdot 13^2 \cdot 31 \cdot 47^2 \cdot 79^2 \cdot 103 \cdot 151 \cdot 241^2 \cdot 353 \cdot 367 \cdot 389 \cdot 449 \\
	& \qquad \cdot 463 \cdot 499 \cdot {\iffullversion \color{lightgray} \fi 50355971} \cdot {\iffullversion \color{lightgray} \fi 1032403334060991048097384477}.
\end{align*}
With a significantly smaller smoothness bound (on the smooth part of $p^2-1$) compared to the \iffullversion previous parameter \else parameters in~\cite{chavez2023sqisign}\fi , this offers a better signing performance in accordance to the SQIsign1D estimator sage script provided in~\cite{santos2024finding}. In particular one could theoretically get a modest 19.85\% improvement for signing\iffullversion ~but we remark that practical considerations are needed in order to materialise this improvement\fi. However we note that this will not compete with \iffullversion the latest versions of SQIsign and notably \fi the second round submission~\cite{NISTPQC-ADD-R2:SQIsign25} to NIST's signature call. So we present this prime to expand diversity of SQIsign1D parameters.

\iffullversion
%\bruno{To do: revise and compress these upcoming remarks}
\begin{remark}
	SQIsign1D suffers from a bad performance scaling primarily due to the parameter search. In particular $\fpnx{n}$ for $n = 4,6$ was used to get sufficiently practical larger parameters~\cite{chavez2023sqisign} rather than $p_2(x)$. Using $p_2(x)$ would eventually give better parameters but at a larger sieving cost. On the other hand, one can use our twins with twin-and-boost and $p_2(x)$ to get better larger parameters. However we did not find enough twins with a large enough power of two to instantiate the oracle and get practical parameters. %Despite this it is unclear whether these sort of parameters would compete with the current version~\cite{NISTPQC-ADD-R2:SQIsign25} of SQIsign.
\end{remark}

%\begin{remark}
%	%The quantity $f$ controls the verification speed and larger $f$ gives faster verification. One might want to choose $f$ maximally but one is restricted to the amount of available torsion (i.e. $\approx p^2$).
%	AprèsSQI~\cite{corte2024apressqi} modifies SQIsign1D to work with torsion subgroups over larger extensions of $\F_{p^2}$ to get faster verification at a minimal depredation on signing. One now needs $2^f \cdot T  \mid N_k = \prod_{m=1}^k (\Phi_m(p^2)/2)$ where $\Phi_m$ is the m\textsuperscript{th} cyclotomic polynomial. One can use the twin-and-boost approach to find suitable AprèsSQI parameters with most of the accessible torsion lying over $\F_{p^2}$. One such parameter is $p = p_2(r)$ where $r = 168647441960281880932446383284074577920$, which has a modest $2^{85}$ factor in $p+1$.
%\end{remark}

\begin{remark}
	In the context of AprèsSQI~\cite{corte2024apressqi} (an adaptation of SQIsign1D which works with larger extensions of $\F_{p^2}$) one can also use the twin-and-boost approach to parameters with a larger $f$. One such parameter with a larger power of two is the prime $p = p_2(r)$ found from a smooth twin $r = 2^{42} \cdot 38345988732608448329403105$.
\end{remark}

\begin{remark}
	As noted in~\cite{santos2024finding} the encryption scheme POKÉ~\cite{basso2025poke} could benefit from $p^2-1$ parameters. One requires a larger power of two compared to SQIsign1D which imposes a larger smoothness bound on the twin when using the twin-and-boost approach. So finding such parameters is challenging with our sieving based algorithm and we leave these considerations as part of future work.
	%	There are other isogeny-based cryptosystems that either uses or could benefit from $p^2-1$ like parameters. This includes a variant of the encryption scheme POKÉ. One can adopt either of these boosting strategies to find parameters to this scheme. However the parameter requirements here imposes changes when using the twin-and-boost approach. We leave these considerations as part of future endeavours.
\end{remark}
\fi

\iffullversion
\else
\appendix
%!TEX root = ./paper.tex
\section{Asymptotics for the geometric mean of the logarithmic primes}

We provide an elementary proof of the following statement. The proof is adapted discussion from a stack exchange post\footnote{\url{https://math.stackexchange.com/q/679344}} in order to get an asymptotically precise statement. To our knowledge this result is not publicly recorded.

\begin{lemma} \label{lem:logprod}
	As $x \to \infty$ we have
	\begin{align*}
		\left( \prod_{p \leq x} \log(p) \right)^{\frac{1}{\pi(x)}} = \log(x) - 1 - o(1),
	\end{align*}
	where the product is over the primes less than $x$.
\end{lemma}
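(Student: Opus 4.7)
The plan is to prove the logarithmic form first. Taking logarithms, the claim is equivalent to showing that the average
\[
\frac{1}{\pi(x)} \sum_{p \leq x} \log\log(p) = \log\log(x) - \frac{1}{\log x} + o\!\left(\frac{1}{\log x}\right),
\]
since then exponentiating the right-hand side yields $\log(\log x - 1 - o(1))$, and raising to the $\pi(x)$-th power produces the stated product formula. So the whole task reduces to establishing this averaged asymptotic for $\log\log p$.

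To get there, I would apply Abel summation to the sequence of indicators of primes against the weight $\phi(t) = \log\log(t)$, yielding
\[
\sum_{p \leq x} \log\log(p) = \pi(x)\log\log(x) - \int_2^x \frac{\pi(t)}{t \log t}\,dt.
\]
Next I would invoke the Prime Number Theorem, in the qualitative form $\pi(t) = (1+o(1))\,t/\log t$, to replace the integrand by $(1+o(1))/\log^2(t)$. The standard asymptotic $\int_2^x \tfrac{dt}{\log^2 t} = \tfrac{x}{\log^2 x} + O\!\left(\tfrac{x}{\log^3 x}\right)$, which follows from the identity $\tfrac{d}{dt}\!\left(\tfrac{t}{\log t}\right) = \tfrac{1}{\log t} - \tfrac{1}{\log^2 t}$ together with the leading-order expansion of the logarithmic integral, then gives
\[
\int_2^x \frac{\pi(t)}{t \log t}\,dt = (1+o(1))\,\frac{x}{\log^2 x}.
\]
Dividing through by $\pi(x) = (1+o(1))\,x/\log x$ produces exactly the target asymptotic for the average of $\log\log p$.

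Finally, I would close the loop: writing $M_x := \log\log x - \tfrac{1}{\log x} + o(\tfrac{1}{\log x})$, the geometric mean is $\exp(M_x) = \log x \cdot \bigl(1 - \tfrac{1}{\log x} + o(\tfrac{1}{\log x})\bigr) = \log x - 1 - o(1)$, and raising to the $\pi(x)$-th power gives the lemma.

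The only subtle point is bookkeeping the errors: I need to make sure the $o(1/\log x)$ inside the exponent really becomes $o(1)$ after multiplying by $\log x$, and not merely $O(1)$. This forces me to work with the relative, not absolute, error in the PNT substitution and in the $\pi(x) \sim x/\log x$ approximation. Once one is careful that each approximation introduces an error of order $O(1/\log x)$ or better on the average of $\log\log p$, the final Taylor expansion of $\exp(M_x)$ cleanly delivers the $-1 - o(1)$ constant. No deeper input than the qualitative PNT and elementary calculus should be needed.
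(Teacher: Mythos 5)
Your proposal is correct and follows essentially the same route as the paper's proof: Abel summation applied to $\sum_{p\le x}\log\log(p)$, evaluation of the resulting integral via the prime number theorem, and exponentiation with a first-order Taylor expansion to extract the $-1-o(1)$ constant. The only cosmetic difference is that the paper carries the PNT error term $O(t/\log^2 t)$ explicitly through the integral rather than working with the qualitative $(1+o(1))$ form, but both yield the required $o(1/\log x)$ accuracy on the average.
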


\begin{proof}
	We consider $\sum_{p \leq x} \log(\log(p))$ which by Abel summation is 
	\begin{align*}
		\sum_{p \leq x} \log(\log(p)) = \pi(x) \log(\log(x)) - \int_2^x \frac{\pi(t)}{t\log(t)} dt.
	\end{align*}
	%\erik{Why would summation by parts apply here? We don't have a product of 2 sequences, right?}  (or summation by parts)
%	\begin{align*}
%		\sum_{p \leq x} \log(\log(p)) = \pi(x) \log(\log(x)) - \int_2^x \frac{\pi(t)}{t\log(t)} dt.
%	\end{align*}
    %     \erik{According to ChatGPT, you apparently can, but you should the following integral as an intermediate step. I think it makes more sense, then it becomes integration by parts:
    %     \begin{align*}
	% 	\sum_{p \leq x} \log(\log(p)) = \int_2^x \log(\log(t)) d\pi(t) = \pi(x) \log(\log(x)) - \int_2^x \frac{\pi(t)}{t\log(t)} dt.
	% \end{align*}}
    % \erik{Then the start of the proof becomes:\\
    % We consider $\sum_{p \leq x} \log(\log(p))$, which by integration by parts can be written as:}
	Using the prime number theorem in the integral term we have
	\begin{align*}
		\int_2^x \frac{\pi(t)}{t\log(t)} dt & = \int_2^x \frac{t/\log(t) + O(t/\log^2(t))}{t\log(t)} dt \\
%		& = \int_2^x \left[ \frac{1}{\log^2(t)} + O\left( \frac{1}{\log^3(t)} \right) \right] dt \\
		& = \frac{x}{\log^2(x)} + O\left( \frac{x}{\log^3(x)} \right) = \frac{\pi(x)}{\log(x)} + O\left( \frac{\pi(x)}{\log^2(x)} \right) .
	\end{align*}
%	Dividing this by $\pi(x)$ we have
%	\begin{align*}
%		\frac{1}{\pi(x)} \int_2^x \frac{\pi(t)}{t\log(t)} dt & = \frac{x}{\pi(x)\log^2(x)} + O\left( \frac{x}{\pi(x)\log^3(x)} \right)  \\
%		& = \frac{1}{\log(x) + O\left( 1 \right)} + O\left( \frac{1}{\log^2(x)} \right) .
%	\end{align*}
	So we have 
	\begin{align*}
		\sum_{p \leq x} \log(\log(p)) = \pi(x)\left( \log(\log(x)) - \frac{1}{\log(x)} - O\left( \frac{1}{\log^2(x)} \right) \right) 
	\end{align*}
	and exponentiating gives
	\begin{align*}
		\prod_{p \leq x} \log(p) = \left( \log(x) \exp \left\{- \frac{1}{\log(x)} - O\left( \frac{1}{\log^2(x)} \right) \right\} \right)^{\pi(x)} .
	\end{align*}
	Finally, using the elementary fact that $e^{f(x)} = 1 + f(x) + o(1)$ when $f(x) = o(1)$, we obtain the intended result. 
\end{proof}

\iffullversion

\section{Largest known $B$-smooth twins for $113 < B < 500$} \label{app:opt}
% \erik{Change the title of this section to: Largest known $B$-smooth twins for $113 < B < 500$ ?}

In Table~\ref{tab:listlargestwins} we list the known largest $B$-smooth twins for $113 < B < 500$, stating where they were found and whether or not we think they are optimal. %\erik{change to: stating where they were found and whether we think they are optimal ?}.

\begin{table}[p]
\centering
\renewcommand{\tabcolsep}{1.1cm}
\resizebox{\textwidth}{!}{
	\begin{tabular}{cccc}
		$B$ & $r$ & Where & Optimality confidence \\
		\hline
		%		$101$ & $4108258965739505499$ & \cite{BSIDH} & Yes \\
		%		$103$ & $19316158377073923834000$ & \cite{BSIDH} & Yes \\
		%		$107$ & $386539843111191224$ & \cite{BSIDH} & Yes \\
		%		$109$ & $90550606380841216610$ & \cite{BSIDH} & Yes \\
		%		$113$ & $205142063213188103639$ & \cite{BSIDH} & Yes \\
		$127$ & $\mathtt{0x2e2c7fc1e68254960}$ & \cite{bruno2022cryptographic} & Strong \\
		$131$ & $\mathtt{0xdf0971aec4e60bb59f}$ & \cite{bruno2022cryptographic} & Strong \\
		$137$ & $\mathtt{0x1a4e4d2d8a05fd236b45}$ & \cite{bruno2022cryptographic} & Strong \\
		$139$ & $\mathtt{0xbcc8805bbc1372e47b}$ & Here & Strong \\
		$149$ & $\mathtt{0x15bf7b64f95196ee2f9}$ & \cite{bruno2022cryptographic} & Strong \\
		$151$ & $\mathtt{0x141e24a43fd1c03486f}$ & \cite{bruno2022cryptographic} & Strong \\
		$157$ & $\mathtt{0x1670f5d1ce32e4d3879cd3}$ & \cite{bruno2022cryptographic} & Strong \\
		$163$ & $\mathtt{0x1f3f1ccb92039e65d1338}$ & Here & Strong \\
		$167$ & $\mathtt{0x242216734de5ff40fcfeb}$ & \cite{bruno2022cryptographic} & Strong \\
		$173$ & $\mathtt{0x235282118c1597288738b81}$ & Here & Strong \\
		$179$ & $\mathtt{0x13c15bf1e12b1d917b3f}$ & \cite{bruno2022cryptographic} & Strong \\
		$181$ & $\mathtt{0x5c1e1cbfdbe35d79fd2a3f}$ & \cite{bruno2022cryptographic} & Strong \\
		$191$ & $\mathtt{0x27bf7daa74ab9511295150}$ & Here & Strong \\
		$193$ & $\mathtt{0x2276a89f79d543f95c3fff}$ & Here & Strong \\
		$197$ & $\mathtt{0x5b6a5f8dd9d325c38d5407f}$ & Here & Strong \\
		$199$ & $\mathtt{0x48cc28fd91925f14fabdfbff}$ & Here & Strong \\
		$211$ & $\mathtt{0x4dac9441b80a2e417d6718}$ & \cite{bruno2022cryptographic} & Conjectured \\
		$223$ & $\mathtt{0x1a7ff405f4bcbdc136785ed8}$ & Here & Conjectured \\
		$227$ & $\mathtt{0xc2024a68a4f0cbfad153214}$ & \cite{bruno2022cryptographic} & Conjectured \\
		$229$ & $\mathtt{0x42d237ebe8804683635fcfbe6}$ & Here & Conjectured \\
		$233$ & $\mathtt{0x12e5a0f3787fca121abaf380}$ & Here & Conjectured \\
		$239$ & $\mathtt{0x21189a77fc9dbc410d03f5ebd4}$ & Here & Conjectured \\
		$241$ & $\mathtt{0x3a6a0e89727bd5207f05773800}$ & Here & Strong \\
		$251$ & $\mathtt{0x4acb78e14ff5ab674fca758f00}$ & Here & Conjectured \\
		$257$ & $\mathtt{0x1121f079ed6e2c422f4df4a3a8297}$ & Here & Strong \\
		$263$ & $\mathtt{0xcc21a0cbe97a991348eed51472}$ & Here & Conjectured \\
		$269$ & $\mathtt{0x41527dd1d202cc8c89a61bb520}$ & Here & Conjectured \\
		$271$ & $\mathtt{0x455d39041f2d5f0533bc070dbff}$ & Here & Conjectured \\
		$277$ & $\mathtt{0x9204d1ce9c2456089052e0a199}$ & Here & -------- \\
		$281$ & $\mathtt{0x11e3943e7a1b82a6c30f0f31ddffff}$ & Here & Strong \\
		$283$ & $\mathtt{0x9be280ede7efda1f206b7ca5b312}$ & Here & Conjectured \\
		$293$ & $\mathtt{0x7bf7d45de9de5173c97621f1be0}$ & Here & -------- \\
		$307$ & $\mathtt{0x1b0805107c686a2c5ac16505948d7}$ & Here & Conjectured \\
		$311$ & $\mathtt{0xac4336c173cb3dcdb21812fe18a42f}$ & Here & Conjectured \\
		$313$ & $\mathtt{0x1ac04f7ef5aa7a829fffc39ece488e}$ & Here & Strong \\
		$317$ & $\mathtt{0x22b6b48a85f96f1ea35da4c6345139}$ & Here & Conjectured \\
		$331$ & $\mathtt{0x130dd6ecb2c141d788ec6d287806c3c07}$ & Here & Strong \\
		$337$ & $\mathtt{0x10765db8b6a7e23a40f9dbea62188f}$ & Here & -------- \\
		$347$ & $\mathtt{0x5e317ad3faa097d352a5dd7015f5c9f}$ & Here & Conjectured \\
		$349$ & $\mathtt{0x32216c77f71cfec5b2315604df094000}$ & Here & Conjectured \\
		$353$ & $\mathtt{0x1aa35c1f4e8ab15fed2e67bf8eaed72f}$ & Here & Conjectured \\
		$359$ & $\mathtt{0x32bd5b0f00a1b93615c994b43c93d9}$ & Here & -------- \\
		$367$ & $\mathtt{0x1fbdc239e43ee272e3f817e8c1769c014}$ & Here & Conjectured \\
		$373$ & $\mathtt{0xe7c0b7ab0162ceb91ec853c2f76850ae}$ & Here & Conjectured \\
		$379$ & $\mathtt{0x1f2af197a8f25e0296596d20248591ff}$ & Here & -------- \\
		$383$ & $\mathtt{0xc97ce8db8b54026d78fe7f2c6509308aeff}$ & Here & Strong \\
		$389$ & $\mathtt{0x13bfdd4deb7ff4b44a05d1253e3e7fff}$ & Here & -------- \\
		$397$ & $\mathtt{0xe41c10582cab77c54a1de9a26ab8534c1}$ & Here & Conjectured \\
		$401$ & $\mathtt{0x3421ed1ee61d103a2eda6549932588fff}$ & Here & -------- \\
		$409$ & $\mathtt{0x7fd0db54b53f7f53cc6324419327fffff}$ & Here & -------- \\
		$419$ & $\mathtt{0x2675ef794ae8122ffcbef736715c91505c}$ & Here & -------- \\
		$421$ & $\mathtt{0x3febfce5c32dacdb532ccda188ba23c1f1}$ & Here & -------- \\
		$431$ & $\mathtt{0x131767ca5fd6a1667d3700895c3343b0997}$ & Here & Conjectured \\
		$433$ & $\mathtt{0x11237d1292cb65eb8f153339e7158815130ff}$ & Here & Strong \\
		$439$ & $\mathtt{0xcb51241c35e76ddfa7bc01b790845fe070f}$ & Here & Conjectured \\
		$443$ & $\mathtt{0x17555d2396cbcdc2f04b1c95db7543c4ad8}$ & Here & --- \\
		$449$ & $\mathtt{0xa96bf0ad86765b20cb1585cc5c7fc3e1000}$ & Here & Conjectured \\
		$457$ & $\mathtt{0x47e7534ec08aa0ade5c68ffaebe32c75dd66c}$ & Here & Conjectured \\
		$461$ & $\mathtt{0x2748aecc4cc7cb5c90412725cc61188a9fe0}$ & Here & -------- \\
		$463$ & $\mathtt{0x15a054b564abe3e572c7916d4cce03fbc79f}$ & Here & -------- \\
		$467$ & $\mathtt{0x25a3047abde565873d7201a20205b585363cec}$ & Here & Strong \\
		$479$ & $\mathtt{0x6452414968e36fccad4592ce0c0ae611f730}$ & Here & -------- \\
		$487$ & $\mathtt{0x129085408a169c0b64f3c3738916f1fea5eff}$ & Here & -------- \\
		$491$ & $\mathtt{0x1163755ef8f6405ebd8afeeaa81c3173c5400}$ & Here & -------- \\
		$499$ & $\mathtt{0x1f56702b1129e2b9fc67b684638a2149c34a8324}$ & Here & Strong \\
		$503$ & $\mathtt{0x6c590c78869ad5dc0c06bf46dbda0257f4480}$ & Here & -------- \\
		$509$ & $\mathtt{0x7833d2561b088645c9375eee7c1fe5baf317400}$ & Here & Conjectured \\
		$521$ & $\mathtt{0x2151a2dabe7e258bb00ab866e1e84b03980000}$ & Here & -------- \\
		$523$ & $\mathtt{0x48b907f88c7a58ad008b7a54ebcbf5b65a57a7}$ & Here & -------- \\
		$541$ & $\mathtt{0x2293d33ed8af59723d5aa8f5f376ca2a5d64d70}$ & Here & -------- \\
		$547$ & $\mathtt{0x9877de1375932504d620d81efe2b3dec7f7b6bf}$ & Here & -------- 
\end{tabular}} 
\caption{Largest strictly $B$-smooth twins for $127 \leq B \leq 547$, where they were found and our confidence in their optimality among the set of strictly $B$-smooth twins. We refer our optimality confidence as \emph{strong} if it is above the estimate and simply \emph{conjectured} if it lies on the estimate.} \label{tab:listlargestwins}

\end{table}

% \newpage % REMOVE
% \erik{Alternative for the caption in Table 3:\\
%     Largest strictly $B$-smooth twins for $127 \leq B \leq 547$, where they were found and our confidence of their optimality. We refer the strength as \emph{strongly conjectured} if it is above the estimate of Theorem \ref{thm:opt} and \emph{conjectured} if it lies on the estimate.}

\fi
\fi

\bibliography{bib}

\providecommand{\bysame}{\leavevmode\hbox to3em{\hrulefill}\thinspace}
\providecommand{\MR}{\relax\ifhmode\unskip\space\fi MR }
% \MRhref is called by the amsart/book/proc definition of \MR.
\providecommand{\MRhref}[2]{%
  \href{http://www.ams.org/mathscinet-getitem?mr=#1}{#2}
}
\providecommand{\href}[2]{#2}
\begin{thebibliography}{10}

\bibitem{NISTPQC-ADD-R2:SQIsign25}
M.~A. Aardal, G.~Adj, D.~F. Aranha, A.~Basso, I.~A.~Canales Mart{\'\i}nez,
  J.~Ch{\'a}vez-Saab, M.~Corte-Real Santos, P.~Dartois, L.~De Feo, M.~Duparc,
  J.~K. Eriksen, T.~B. Fouotsa, D.~L.~Gazzoni Filho, B.~Hess, D.~Kohel,
  A.~Leroux, P.~Longa, L.~Maino, M.~Meyer, K.~Nakagawa, H.~Onuki, L.~Panny,
  S.~Patranabis, C.~Petit, G.~Pope, K.~Reijnders, D.~Robert,
  F.~Rodr{\'\i}guez-Henr{\'\i}quez, S.~Schaeffler, and B.~Wesolowski,
  \emph{{SQIsign}}, Tech. report, {N}ational {I}nstitute of {S}tandards and
  {T}echnology, 2025, Version 2.0, available at
  \url{https://csrc.nist.gov/csrc/media/Projects/pqc-dig-sig/documents/round-2/spec-files/sqisign-spec-round2-web.pdf}.

\bibitem{adleman1995factoring}
L.~A. Adleman, \emph{{Factoring and Lattice Reduction}}, Manuscript, 1995.

\bibitem{aggarwal2015solving}
D.~Aggarwal, D.~Dadush, O.~Regev, and N.~Stephens-Davidowitz, \emph{Solving the
  shortest vector problem in $2^n$ time using discrete {Gaussian} sampling},
  {STOC}, {ACM Press}, 2015, pp.~733--742.

\bibitem{aggarwal2015solvingdet}
D.~Aggarwal, D.~Dadush, and N.~Stephens-Davidowitz, \emph{Solving the closest
  vector problem in $2^n$ time--the discrete {Gaussian} strikes again!},
  {FOCS}, IEEE, 2015, pp.~563--582.

\bibitem{albrecht2019general}
M.~R. Albrecht, L.~Ducas, G.~Herold, E.~Kirshanova, E.~W. Postlethwaite, and
  M.~Stevens, \emph{The general sieve kernel and new records in lattice
  reduction}, EUROCRYPT, Lecture Notes in Computer Science, vol. 11477,
  Springer, 2019, pp.~717--746.

\bibitem{banks2006integers}
W.~D. Banks and I.~E. Shparlinski, \emph{Integers with a large smooth divisor},
  arXiv preprint math/0601460 (2006).

\bibitem{basso2024sqisign2d}
A.~Basso, P.~Dartois, L.~De Feo, A.~Leroux, L.~Maino, G.~Pope, D.~Robert, and
  B.~Wesolowski, \emph{{SQIsign2D--West}: The fast, the small, and the safer},
  {ASIACRYPT}, Lecture Notes in Computer Science, vol. 15486, Springer, 2024,
  pp.~339--370.

\bibitem{basso2025poke}
A.~Basso and L.~Maino, \emph{{POK{\'E}}: A compact and efficient {PKE} from
  higher-dimensional isogenies}, {EUROCRYPT}, Lecture Notes in Computer
  Science, vol. 15602, Springer, 2025, pp.~94--123.

\bibitem{becker2016new}
A.~Becker, L.~Ducas, N.~Gama, and T.~Laarhoven, \emph{New directions in nearest
  neighbor searching with applications to lattice sieving}, {SODA}, ACM-SIAM,
  2016, pp.~10--24.

\bibitem{bruno2022cryptographic}
G.~Bruno, M.~Corte-Real Santos, C.~Costello, J.~K. Eriksen, M.~Meyer,
  M.~Naehrig, and B.~Sterner, \emph{Cryptographic smooth neighbors}, ASIACRYPT,
  Lecture Notes in Computer Science, vol. 14444, Springer, 2023, pp.~190--221.

\bibitem{buzek2022finding}
J.~Buzek, J.~Hasan, J.~Liu, M.~Naehrig, and A.~Vigil, \emph{Finding twin smooth
  integers by solving {Pell} equations},  (2022).

\bibitem{chavez2023sqisign}
J.~Ch{\'a}vez-Saab, M.~Corte-Real Santos, L.~De Feo, J.~K. Eriksen, B.~Hess,
  D.~Kohel, A.~Leroux, P.~Longa, M.~Meyer, L.~Panny, S.~Patranabis, C.~Petit,
  F.~Rodr{\'\i}guez-Henr{\'\i}quez, S.~Schaeffler, and B.~Wesolowski,
  \emph{{SQIsign}}, Tech. report, {N}ational {I}nstitute of {S}tandards and
  {T}echnology, 2023, Version 1.0, available at
  \url{https://csrc.nist.gov/csrc/media/Projects/pqc-dig-sig/documents/round-1/spec-files/sqisign-spec-web.pdf}.

\bibitem{conrey2013smooth}
J.~B. Conrey, M.~A. Holmstrom, and T.~L. McLaughlin, \emph{Smooth neighbors},
  Experimental Mathematics \textbf{22} (2013), no.~2, 195--202.

\bibitem{corless1996lambert}
R.~M. Corless, G.~H. Gonnet, D.~E.~G. Hare, D.~J. Jeffrey, and D.~E. Knuth,
  \emph{On the {Lambert} {W} function}, Advances in Computational mathematics
  \textbf{5} (1996), 329--359.

\bibitem{corte2024apressqi}
M.~{Corte-Real Santos}, J.~K. Eriksen, M.~Meyer, and K.~Reijnders,
  \emph{{Apr{\`e}sSQI}: Extra fast verification for {SQI}sign using
  extension-field signing}, EUROCRYPT, Lecture Notes in Computer Science, vol.
  14651, Springer, 2024, pp.~63--93.

\bibitem{santos2024finding}
M.~{Corte-Real Santos}, J.~K. Eriksen, M.~Meyer, and
  F.~Rodr{\'\i}guez-Henr{\'\i}quez, \emph{Finding practical parameters for
  isogeny-based cryptography}, CiC \textbf{1} (2024), no.~3.

\bibitem{BSIDH}
C.~Costello, \emph{{B-SIDH:} supersingular isogeny {Diffie-Hellman} using
  twisted torsion}, ASIACRYPT, Lecture Notes in Computer Science, vol. 12492,
  Springer, 2020, pp.~440--463.

\bibitem{costello2021sieving}
C.~Costello, M.~Meyer, and M.~Naehrig, \emph{Sieving for twin smooth integers
  with solutions to the {Prouhet-Tarry-Escott} problem}, EUROCRYPT, Lecture
  Notes in Computer Science, vol. 12696, Springer, 2021, pp.~272--301.

\bibitem{de1951asymptotic}
N.~G. {de Bruijn}, \emph{The asymptotic behaviour of a function occuring in the
  theory of primes}, Journal of the Indian Mathematical Society. New Series
  \textbf{15} (1951), 25--32.

\bibitem{dickman1930frequency}
K.~Dickman, \emph{On the frequency of numbers containing prime factors of a
  certain relative magnitude}, Arkiv for matematik, astronomi och fysik
  \textbf{22} (1930), no.~10, A--10.

\bibitem{ducas2018shortest}
L.~Ducas, \emph{Shortest vector from lattice sieving: a few dimensions for
  free}, {EUROCRYPT}, Lecture Notes in Computer Science, vol. 10820, Springer,
  2018, pp.~125--145.

\bibitem{ducas2020randomized}
L.~Ducas, T.~Laarhoven, and W.~van Woerden, \emph{The randomized slicer for
  {CVPP}: sharper, faster, smaller, batchier}, {PKC}, Lecture Notes in Computer
  Science, vol. 12111, Springer, 2020, pp.~3--36.

\bibitem{ducas2021advanced}
L.~Ducas, M.~Stevens, and W.~van Woerden, \emph{Advanced lattice sieving on
  {GPU}s, with tensor cores}, EUROCRYPT, Lecture Notes in Computer Science,
  vol. 12697, Springer, 2021, pp.~249--279.

\bibitem{erdos1975problems}
P.~Erd\H{o}s, \emph{Problems and results on number theoretic properties of
  consecutive integers and related questions}, Proc. Fifth Manitoba Conf. on
  Numerical Mathematics, 1975, pp.~25--44.

\bibitem{SQISign}
L.~De Feo, D.~Kohel, A.~Leroux, C.~Petit, and B.~Wesolowski, \emph{{SQISign:}
  compact post-quantum signatures from quaternions and isogenies}, ASIACRYPT,
  Lecture Notes in Computer Science, vol. 12491, Springer, 2020, pp.~64--93.

\bibitem{luca2022new}
L.~De Feo, A.~Leroux, P.~Longa, and B.~Wesolowski, \emph{New algorithms for the
  {Deuring} correspondence - towards practical and secure {SQISign}
  signatures}, {EUROCRYPT}, Lecture Notes in Computer Science, vol. 14008,
  Springer, 2023, pp.~659--690.

\bibitem{granville2008smooth}
A.~Granville, \emph{Smooth numbers: computational number theory and beyond},
  Algorithmic number theory: lattices, number fields, curves and cryptography
  \textbf{44} (2008), 267--323.

\bibitem{karenin2025fast}
A.~Karenin, E.~Kirshanova, J.~Nowakowski, and A.~May, \emph{Fast slicer for
  batch-{CVP}: Making lattice hybrid attacks practical}, {ASIACRYPT}, Lecture
  Notes in Computer Science, vol. 16247, Springer, 2025, pp.~100--132.

\bibitem{lehmer1964problem}
D.~H. Lehmer, \emph{On a problem of {Stø}rmer}, Illinois Journal of
  Mathematics \textbf{8} (1964), no.~1, 57--79.

\bibitem{lenstra1993development}
A.~K. Lenstra and H.~W. Lenstra, \emph{The development of the number field
  sieve}, Lecture Notes in Mathematics, vol. 1554, Springer Science \& Business
  Media, 1993.

\bibitem{leroux2022quaternion}
A.~Leroux, \emph{Quaternion algebra and isogeny-based cryptography}, Theses,
  Ecole doctorale de l’Institut Polytechnique de Paris (2022).

\bibitem{luca2011largest}
F.~Luca and F.~Najman, \emph{On the largest prime factor of $x^2-1$},
  Mathematics of computation \textbf{80} (2011), no.~273, 429--435.

\bibitem{Martin1999AnAF}
G.~Martin, \emph{An asymptotic formula for the number of smooth values of a
  polynomial}, Journal of Number Theory \textbf{93} (1999), 108--182.

\bibitem{micciancio2002complexity}
D.~Micciancio and S.~Goldwasser, \emph{Complexity of lattice problems: a
  cryptographic perspective}, The Springer International Series in Engineering
  and Computer Science, vol. 671, Springer Science \& Business Media, 2002.

\bibitem{zenodo}
E.~Mulder, B.~Sterner, and W.~van Woerden, \emph{Zenodo dataset:~smooth twins},
  January 2026, available at \url{https://doi.org/10.5281/zenodo.18234089}.

\bibitem{nakagawa2024sqisign2d}
K.~Nakagawa, H.~Onuki, W.~Castryck, M.~Chen, R.~Invernizzi, G.~Lorenzon, and
  F.~Vercauteren, \emph{{SQIsign2D-East}: A new signature scheme using
  2-dimensional isogenies}, {ASIACRYPT}, Lecture Notes in Computer Science,
  vol. 15486, Springer, 2024, pp.~272--303.

\bibitem{schnorr1991factoring}
C.~P. Schnorr, \emph{Factoring integers and computing discrete logarithms via
  diophantine approximation}, EUROCRYPT, Lecture Notes in Computer Science,
  vol. 547, Springer, 1991, pp.~281--293.

\bibitem{schnorr1994lattice}
C.~P. Schnorr and M.~Euchner, \emph{Lattice basis reduction: Improved practical
  algorithms and solving subset sum problems}, Mathematical programming
  \textbf{66} (1994), 181--199.

\bibitem{sterner2025towards}
B.~Sterner, \emph{Towards optimally small smoothness bounds for
  cryptographic-sized smooth twins and their isogeny-based applications}, SAC,
  Lecture Notes in Computer Science, vol. 15516, Springer, 2025, pp.~178--202.

\bibitem{Stormer}
C.~St{\o}rmer, \emph{Quelques th{\'e}or{\`e}mes sur l'{\'e}quation de {Pell}
  $x^2-dy^2=\pm1$ et leurs applications}, Christiania Videnskabens Selskabs
  Skrifter, Math. Nat. Kl (1897), no.~2, 48.

\bibitem{van1969numerical}
J.~van~de Lune and E.~Wattel, \emph{On the numerical solution of a
  differential-difference equation arising in analytic number theory},
  Mathematics of Computation \textbf{23} (1969), no.~106, 417--421.

\bibitem{zhao2025sieving}
Z.~Zhao, J.~Ding, and B.~Y. Yang, \emph{Sieving with streaming memory access},
  IACR Transactions on Cryptographic Hardware and Embedded Systems
  \textbf{2025} (2025), no.~2, 362--384.

\end{thebibliography}
\bibliographystyle{amsplain}

\iffullversion
\appendix

\fi

\end{document}